\newtheorem{theorem}{Theorem}[section]
\newtheorem{corollary}[theorem]{Corollary}
\newtheorem{lemma}[theorem]{Lemma}
\newtheorem{example}[theorem]{Example}
\newtheorem{definition}[theorem]{Definition}
\newtheorem{conventions}[theorem]{Conventions}
\newtheorem{remark}[theorem]{Remark}
\numberwithin{equation}{section}
\numberwithin{figure}{section}
\newcommand{\pa}[1]{\left(#1\right)}
\newcommand{\tn}[1]{\textnormal{#1}}
\newcommand{\card}[1]{\left| #1 \right|}
\newcommand{\CAT}{\textsc{cat}}
\newcommand{\TOP}{\textsc{top}}
\newcommand{\DIFF}{\textsc{diff}}
\newcommand{\PL}{\textsc{pl}}
\newcommand{\R}{\mathbb{R}}
\newcommand{\intr}[1]{#1^{\circ}}
\newcommand{\closr}[1]{\overline{#1}}
\newcommand{\bd}{\partial}
\newcommand{\E}[1]{\mathcal{E}\left(#1\right)}
\newcommand{\enbd}[2]{\mathcal{N}(#1;#2)}
\newcommand{\eh}{h}
\renewcommand{\phi}{\varphi}
\newcommand{\defword}{\textbf}
\renewcommand{\setminus}{-}
\newcommand{\piz}[1]{\card{\pi_0\pa{#1}}}
\newcommand{\parity}[1]{\mathcal{P}(#1)}
\newcommand{\image}[1]{\textnormal{Im}\,#1}
\newcommand{\bdnc}[1]{\mathrm{d}#1}
\begin{document}

\title{The End Sum of Surfaces}

\author[L.~Axon]{Liam Axon}
\address{Newton, MA 02459}
\email{laxon26@gmail.com}

\author[J.~Calcut]{Jack Calcut}
\address{Department of Mathematics, Oberlin College, Oberlin, OH 44074}
\email{jcalcut@oberlin.edu}

\keywords{End, end sum, 1-handle at infinity, noncompact surface, classification of surfaces, proper ray}

\subjclass[2020]{Primary 57K20; Secondary 57Q99.}

\date{\today}

\begin{abstract}
End sum is a natural operation for combining two noncompact manifolds and
has been used to construct various manifolds with interesting properties.
The uniqueness of end sum has been well-studied in dimensions three and higher.
We study end sum---and the more general notion of adding a 1-handle at infinity---for surfaces
and prove uniqueness results.
The result of adding a 1-handle at infinity to distinct ends of a surface with compact boundary
is uniquely determined by the chosen ends and the orientability of the 1-handle.
As a corollary, the end sum of two surfaces with compact boundary is uniquely determined by the chosen ends.
Unlike uniqueness results in higher dimensions, which rely on isotopy uniqueness of rays,
our results rely fundamentally on a classification of noncompact surfaces.
\end{abstract}

\maketitle

\section{Introduction} \label{sec:introduction}

End sum is the analogue for open manifolds of the boundary sum of manifolds with boundary.
It was introduced by Gompf~\cite{gompf3,gompfinf} in the 1980's to construct smooth manifolds homeomorphic to $\R^4$.
Gompf~\cite[p.~322]{gompf3} colloquially described end sum as
gluing together two noncompact manifolds using a piece of tape.
More formally, the piece of tape is a $1$-handle at infinity.
Since that time, several authors have used end sum to construct manifolds with interesting properties.
Recently, Bennett~\cite{bennett} used end sum to produce new smooth structures on open $4$-manifolds
and Sparks~\cite{sparks} used it to construct $4$-dimensional splitters.
For further examples, see the second author and Gompf~\cite{cg}
and the second author, Haggerty, and Guilbault~\cite{cgh}.

A 1-handle at infinity is attached to a manifold $M$ along a chosen pair of disjoint, properly embedded rays
pointing to ends of $M$.
End sum is the special case where $M$ has two components and the 1-handle connects them.
The dependence on ray choice of adding a 1-handle at infinity has been well-studied.
Gompf~\cite{gompfinf} first showed that end sums of manifolds homeomorphic to $\R^4$
are independent of ray choice.
Myers~\cite{myers} showed that end summing two copies of $\R^3$ using knotted rays
yields uncountably many homeomorphism types of contractible, open 3-manifolds. 
The second author and Haggerty~\cite{ch} constructed examples of pairs of
connected, open, oriented, one-ended $n$-manifolds for each $n\geq3$
that may be end summed using various rays to produce manifolds that are not proper homotopy equivalent.
The latter examples arise from complicated fundamental group behavior at even just one of the ends.
In dimension $n=1$, adding a $1$-handle along specified ends is always unique.

For manifolds of any dimension, the \textit{Mittag-Leffler} condition (also called \textit{semistability})
on an end is a necessary and sufficient condition for any two proper rays pointing to that end to be properly homotopic.
In dimensions four and higher, such a proper homotopy may be upgraded to an ambient isotopy.
That yields uniqueness results for end sums and adding 1-handles at infinity as proved by the second author and Gompf~\cite{cg}.

\begin{theorem}[Calcut and Gompf]\label{thm:cg}
	Let $X$ be a (possibly disconnected) $n$-manifold, $n\ge4$.
	Then the result of attaching a (possibly infinite) collection of 1-handles at infinity to some oriented
	Mittag-Leffler ends of $X$ depends only on the pairs of ends to which each 1-handle is attached,
	and whether the corresponding orientations agree.
\end{theorem}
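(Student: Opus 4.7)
The plan is to reduce the theorem to a single statement about ambient isotopy of proper rays in dimension $\geq 4$, which is the real content. A $1$-handle at infinity is attached along a closed regular neighborhood of a properly embedded ray, so once one shows that the attaching data (the ray, or equivalently a proper embedding of $[0,\infty)$ together with a framing) is determined up to ambient isotopy by the end it points to, the attachment is determined up to homeomorphism rel the rest of $X$; the orientation of the $1$-handle accounts for the remaining binary choice.

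First I would dispose of the case of a single $1$-handle attached to an ordered pair of ends $(\varepsilon_1,\varepsilon_2)$. Given two proper rays $r,r'$ pointing to the same Mittag--Leffler end $\varepsilon$, the semistability hypothesis guarantees (this is the characterization quoted in the introduction) that $r$ and $r'$ are properly homotopic. The key step is to upgrade this proper homotopy to an ambient proper isotopy of $X$ carrying $r$ to $r'$; this is where $n\ge 4$ is essential, since one needs enough codimension to push a proper homotopy off itself using general position and then invoke an infinite-dimensional isotopy extension / engulfing argument in the \CAT\ category. I would cite (or reprove, following Edwards--Kirby-type techniques) the standard fact that in a manifold of dimension $n\ge 4$, properly homotopic proper rays are ambient isotopic by an isotopy that is the identity outside an arbitrarily small neighborhood of a proper homotopy between them. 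Applying this separately to each of the two attaching rays yields a homeomorphism of $X$ taking the old pair of attaching rays to the new pair, hence a homeomorphism between the two resulting $1$-handle attachments. Framing of the ray contributes a $\mathbb{Z}/2$ worth of choice that matches precisely the orientation alternative in the statement.

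The hardest point is the infinite collection case. Attaching infinitely many $1$-handles requires the attaching rays to form a locally finite, pairwise disjoint family, and the isotopies produced above must be combined into a single ambient isotopy of $X$ that is still proper. I would handle this by an exhaustion: choose a compact exhaustion $K_1\subset K_2\subset\cdots$ of $X$, and construct the comparing homeomorphism in stages so that the $k$th stage modifies only rays whose relevant portions lie outside $K_{k-1}$ and is the identity outside $K_k$ together with a neighborhood of finitely many rays. Local finiteness of the ray family and the fact that each ray can be pushed by an isotopy supported in an arbitrarily small neighborhood of a proper homotopy track allow these stages to be composed into a well-defined, proper ambient homeomorphism. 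The main obstacle I anticipate is making this bookkeeping clean enough that the infinitely many isotopies, each produced independently by the codimension argument, can be chosen with mutually disjoint or at least locally finite supports; everything else is formal once the single-ray isotopy uniqueness is in hand.
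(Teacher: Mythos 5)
This theorem is not proved in the paper you were given: it is quoted from Calcut--Gompf~\cite{cg}, so your proposal has to be measured against the argument in that reference. Your reduction is indeed the same one used there: Mittag-Leffler ends give proper homotopies between rays pointing to the same end, one upgrades proper homotopy to ambient isotopy in dimension $\geq 4$, uniqueness of the tubular neighborhood data then reduces everything to the ends and an orientation ($\mathbb{Z}/2$, via $\pi_0 O(n-1)$) choice. The problem is that you treat the upgrade step as a citable ``standard fact'' established by codimension and general position, when it is precisely the main technical content of~\cite{cg} and, for $n=4$, cannot be obtained the way you describe. The track of a proper homotopy between rays is $2$-dimensional, and $2+2=4$: in a $4$-manifold two such tracks (or a track and itself) generically intersect in isolated points, so general position does not make the homotopy embedded or disjoint from anything, and an Edwards--Kirby-type isotopy extension has nothing to extend. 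Calcut and Gompf handle $n=4$ by a genuinely noncompact argument, putting the singular homotopy in general position and then pushing the finitely-many-in-each-compactum intersection points off to infinity along the rays before engulfing/isotopy extension is applied; only for $n\geq 5$ does your codimension argument run as stated. As written, your proof covers $n\geq5$ and assumes the hardest case.

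Two smaller points. First, you identify the infinite collection as the hardest point; in~\cite{cg} it is handled not by composing infinitely many separately constructed isotopies but by proving an unknotting theorem for \emph{multirays} (a single proper embedding of $[0,\infty)\times S$ for a discrete countable $S$), which makes the properness and local finiteness automatic and avoids the delicate support bookkeeping you anticipate; your exhaustion scheme could probably be made to work, but it is doing by hand what the multiray formulation does structurally. Second, passing from ``the attaching multirays are ambiently isotopic'' to ``the results of handle attachment are isomorphic'' also uses uniqueness of proper tubular neighborhoods of multirays in the relevant category, which you use implicitly in the framing remark but never state; it is true and is addressed in~\cite{cg}, but it is part of the proof, not a formality.
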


An immediate corollary is the uniqueness of oriented end sums of two $n$-manifolds with
$n\geq4$ along oriented Mittag-Leffler ends.

In the present paper, we study end sum for surfaces.
Note from the outset that for each end of a \textit{surface with compact boundary},
the Mittag-Leffler condition is actually equivalent to the end being collared.
An end of a manifold is \textit{collared} provided it has a neighborhood
of the form $Z\times [0,\infty)$ for some connected, compact manifold $Z$.
The authors plan to address the classification of Mittag-Leffler ends of general surfaces in a future paper.
Observe now that a Mittag-Leffler end of a surface with noncompact boundary components
need not be collared as shown by the end $\eta$ in Figure~\ref{fig:oddex1}.
\begin{figure}[htb!]
    \centerline{\includegraphics[scale=0.9]{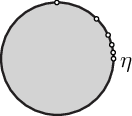}}
    \caption{Surface $M$ obtained from the closed disk by removing a sequence of boundary points and the single limit point of that sequence.}
	\label{fig:oddex1}
\end{figure}
Properly embedded rays in $\R^2$, $S^1\times[0,\infty)$, and closed half space $\R^2_+$
may be straightened by ambient isotopy~\cite[pp.~1845--1852]{cks}.
So, up to isotopy, there are unique proper rays in Mittag-Leffler (=collared) ends of surfaces with compact boundary.
In general, ends of noncompact surfaces may have infinite genus, complicated fundamental group behavior,
and fail to be Mittag-Leffler.
Thus, one might suspect that end sums of surfaces depend on the choices of rays within specified ends.
In fact, the contrary is true. The following is our main result.

\begin{theorem} \label{mainthm}
	Let $M$ be a (possibly disconnected) surface with compact boundary.
	Then the result of attaching a 1-handle at infinity to distinct ends of $M$
	depends only on the ends to which the 1-handle is attached and orientations.
	If the chosen ends lie in different components of $M$ or they lie in the same non-orientable component of $M$,
	then orientation is irrelevant.
\end{theorem}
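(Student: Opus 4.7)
My plan is to reduce the theorem to the Kerékjártó--Richards classification of noncompact surfaces and its extension to surfaces with compact boundary. That classification says a surface with compact boundary is determined up to homeomorphism, component by component, by (i) the number of boundary circles, (ii) orientability class, (iii) (possibly infinite) genus, and (iv) the end data: the end space together with the distinguished subsets of ends of infinite genus and of non-orientable ends. The strategy is to verify that each of these invariants for the resulting surface $M'$ depends only on the chosen ends $\epsilon_1, \epsilon_2$ and the orientation of the $1$-handle, and that in the two stated cases (different components, or same non-orientable component) orientation is irrelevant.

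First I would identify $\E{M'}$ with the quotient $\E{M}/(\epsilon_1 \sim \epsilon_2)$. Take a cofinal sequence of compacta $K_n \subset M$, each meeting the attaching rays $r_1, r_2$ in initial segments. The compacta $K_n' \subset M'$ obtained by adjoining a compact arc of the handle's core are cofinal in $M'$, and the natural bijection between $\pi_0(M \setminus K_n)$ and $\pi_0(M' \setminus K_n')$ identifies exactly the two components containing the tails of $r_1$ and $r_2$. Passing to inverse limits over $n$ yields the claimed description of $\E{M'}$ as a quotient space, which is manifestly independent of the particular rays chosen within $\epsilon_1, \epsilon_2$.

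Next I would read off the remaining invariants. The number of components and boundary circles is immediate: the handle is attached in the interior, and it decreases the number of components by one precisely when $\epsilon_1, \epsilon_2$ lie in different components of $M$. Orientability and total genus reduce to an Euler characteristic computation local to the handle, depending only on the components containing $\epsilon_1, \epsilon_2$ and the orientation of the handle; in the two cases singled out by the theorem either a component-flip or an orientation-reversing loop in a non-orientable component absorbs the orientation choice on the handle. For the end-data subsets, all ends of $M$ distinct from $\epsilon_1, \epsilon_2$ have unchanged neighborhoods, while the merged end $\epsilon_*$ belongs to the infinite-genus (resp.\ non-orientable) subset of $\E{M'}$ iff at least one of $\epsilon_1, \epsilon_2$ belonged to the corresponding subset in $\E{M}$: a single $1$-handle contributes only a bounded amount of genus and of nonorientability to any end neighborhood, so it can merge decorations but cannot promote a ``finite'' end to an ``infinite'' one.

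The main obstacle will be the careful bookkeeping at the merged end when $\epsilon_1$ or $\epsilon_2$ already carries infinite genus or infinite nonorientability. One has to produce end neighborhoods small enough to isolate the ends of $M$ but large enough to contain the handle's footprint, and then verify that the decorations at $\epsilon_*$ are exactly the ``union'' of those at $\epsilon_1$ and $\epsilon_2$ (modulo the bounded handle contribution). Once this is in place the classification converts equality of invariants into a homeomorphism between any two results $M'$ corresponding to the same ends and orientation data, and the theorem follows.
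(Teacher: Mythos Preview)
Your proposal is correct and follows essentially the same route as the paper's primary proof in Section~5: compute each invariant appearing in the Kerékjártó--Richards classification (boundary circles, orientability, genus/parity, end space, and the nested subspaces of infinite-genus and non-orientable ends) for the result $N$, show each depends only on the pair of ends and the orientation datum, and then invoke the classification. The paper carries this out via Lemmas~5.7--5.12, matching your outline point for point; the step you flag as the ``main obstacle'' (bookkeeping at the merged end) is exactly where the paper spends most of its effort, particularly in showing that there is a \emph{single} extraordinary end of $N$ (your quotient description of $\E{M'}$), which genuinely uses both the compact-boundary and distinct-ends hypotheses.
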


A more precise statement is given in Theorem~\ref{thm:endSumUniquenessPLCase} below.
An immediate corollary is uniqueness of end sums of two connected surfaces with compact boundary along chosen ends
(irrespective of orientations).

Contrasting Theorems~\ref{thm:cg} and~\ref{mainthm}, we see that for surfaces the relevant ends need not be Mittag-Leffler
and there is greater flexibility with orientations.
On the other hand, our arguments for surfaces assume: (i) $M$ has compact boundary,
(ii) a single $1$-handle is attached to $M$,
and (iii) the relevant ends of $M$ are distinct.
Throughout, we opt to work with a single $1$-handle for simplicity.
Applying our results iteratively yields results for attaching finitely many $1$-handles,
and our results likely carry over to appropriate settings involving infinitely many $1$-handles.
Before we discuss assumptions (i) and (iii), we introduce some useful terminology.

Consider the general case of attaching a single $1$-handle at infinity to an $n$-manifold $M$.
Let $r$ and $r'$ denote the rays in $M$ along which the $1$-handle is attached,
let $\varepsilon$ and $\varepsilon'$ denote the ends of $M$ to which $r$ and $r'$ point (respectively),
and let $N=M\cup \pa{1\tn{-handle}}$ denote the resulting $n$-manifold.
In this unrestricted setting, we allow $M$ to be disconnected, $\partial M$ to be compact (possibly empty) or
noncompact (possibly with noncompact boundary components),
$\varepsilon$ and $\varepsilon'$ to be equal or distinct,
and the $1$-handle attachment to respect or ignore any possible given orientations.
An end $\eta$ of $N$ is \textit{ordinary} provided it has a neighborhood disjoint from the $1$-handle.
Otherwise, $\eta$ is \textit{extraordinary}.
Intuitively, the extraordinary end(s) of $N$ are those involved in the attachment of the $1$-handle.
For a space $X$, let $\E{X}$ denote the \textit{space of ends} of $X$,
and let $\card{\E{X}}$ denote the \textit{number of ends} of $X$.
The possible number of extraordinary ends of $N$ varies with the dimension $n$.
If $n=1$, then there are no extraordinary ends---attaching a $1$-handle at infinity
to a $1$-manifold simply eliminates two ends of $M$ and $\card{\E{N}}=\card{\E{M}}-2$.
If $n\geq3$, then there is one extraordinary end, and
$\card{\E{N}}=\card{\E{M}}-1$ (when $\varepsilon\ne\varepsilon'$) or
$\card{\E{N}}=\card{\E{M}}$ (when $\varepsilon=\varepsilon'$).
For surfaces, the situation is more complicated---ultimately due to the fact that
a surface may be separated by a $1$-dimensional submanifold.
There may be one or two extraordinary ends, and $\card{\E{N}}$
may equal $\card{\E{M}}-1$, $\card{\E{M}}$, or $\card{\E{M}}+1$.
Predicting which occur is subtle, especially in the presence of noncompact boundary components.
The following examples exhibit that subtlety and others.
Basic examples are included for context and comparison.
In each example, $n=2$ and $N=M\cup \pa{1\tn{-handle}}$.

\begin{enumerate}
\item Let $M=\R^2 \sqcup \R^2$ be the disjoint union of two copies of $\R^2$ as in Figure~\ref{fig:r2basic}.
\begin{figure}[htb!]
    \centerline{\includegraphics[scale=0.9]{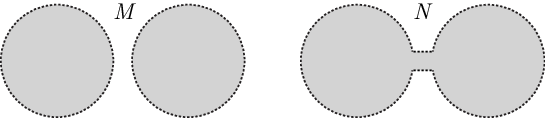}}
    \caption{Surface $M=\R^2 \sqcup \R^2$ (left) and end sum $N$ of $M$ (right).}
	\label{fig:r2basic}
\end{figure}
Regardless of the orientability of the $1$-handle, $N$ is homeomorphic to $\R^2$ 
and has one end (extraordinary).
Here, $\card{\E{N}}=\card{\E{M}}-1$.
\item Let $M=\R^2$ as in Figure~\ref{fig:r2exs}.
\begin{figure}[htb!]
    \centerline{\includegraphics[scale=0.9]{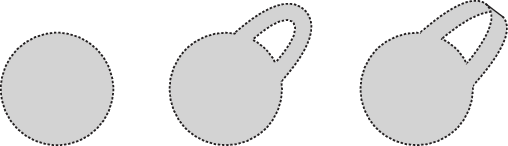}}
    \caption{Surface $M=\R^2$ (left), result $N$ of attaching an oriented $1$-handle (middle),
				and result $N'$ of attaching a non-oriented $1$-handle (right).}
	\label{fig:r2exs}
\end{figure}
Let $N$ be the result using an oriented $1$-handle,
and let $N'$ be the result using a non-oriented $1$-handle.
Then, $N$ is an open cylinder with two ends (both extraordinary),
and $N'$ is an open M\"{o}bius band with one end (extraordinary).
Here, $\card{\E{N}}=\card{\E{M}}+1$ and $\card{\E{N'}}=\card{\E{M}}$.
\item Let $M$ be an open annulus as in Figure~\ref{fig:annulusbasic}.
\begin{figure}[htb!]
    \centerline{\includegraphics[scale=0.9]{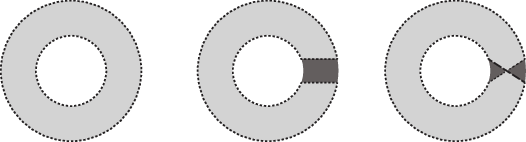}}
    \caption{Open annulus $M$ (left),
				result $N$ of attaching an oriented $1$-handle (middle),
				and result $N'$ of attaching a non-oriented $1$-handle (right).}
	\label{fig:annulusbasic}
\end{figure}
Attach a $1$-handle to the distinct ends of $M$.
Let $N$ be the result using an oriented $1$-handle,
and let $N'$ be the result using a non-oriented $1$-handle.
Then, $N$ is a punctured torus with one end (extraordinary),
and $N'$ is a punctured Klein bottle with one end (extraordinary).
Here, $\card{\E{N}}=\card{\E{M}}-1=\card{\E{N'}}$.
\item Let $M=\R^2_+ \sqcup \R^2_+$ be the disjoint union of two copies of closed half-space
as in Figure~\ref{fig:endSumBoundaryCounterexample}.
\begin{figure}[htb!]
    \centerline{\includegraphics[scale=0.9]{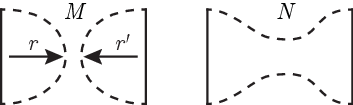}}
    \caption{Surface $M=\R^2_+ \sqcup \R^2_+$ and end sum $N$ of $M$.}
\label{fig:endSumBoundaryCounterexample}
\end{figure}
Regardless of the orientability of the $1$-handle, $N$ is homeomorphic to
$[0,1]\times\R$ and has two ends (both extraordinary).
Here, $\card{\E{N}}=\card{\E{M}}$ in both cases.
In a sense, the noncompact boundary components of $M$ clog up the ends of $N$.
\item\label{deex} Let $M$ be the open, oriented, one-ended surface with infinite genus as in Figure~\ref{fig:sameendex}.
\begin{figure}[htb!]
    \centerline{\includegraphics[scale=0.9]{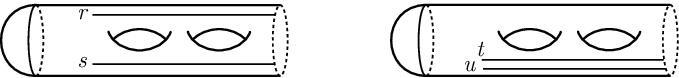}}
    \caption{One-ended, infinite genus surface $M$ containing two non-parallel rays $r$ and $s$ (left)
						and two parallel rays $t$ and $u$ (right).}
	\label{fig:sameendex}
\end{figure}
Phillips and Sullivan~\cite{ps} referred to $M$ as the \textit{Infinite Loch Ness monster};
see also Aramayona and Vlamis~\cite[p.~463]{av}.
Attach an oriented $1$-handle at infinity to $M$.
Let $N$ be the result using the non-parallel rays $r$ and $s$, and
let $N'$ be the result using the parallel rays $t$ and $u$.
Then $N$ has one end (infinite genus and extraordinary),
and $N'$ has two ends (one of genus zero, the other of infinite genus, and both extraordinary).
Here, $\card{\E{N}}=\card{\E{M}}$ and $\card{\E{N'}}=\card{\E{M}}+1$.
Thus, the hypothesis in Theorem~\ref{mainthm} that the $1$-handle is attached to distinct ends may not be omitted.
In fact, the distinct end hypothesis is required even when $M$ has finite genus.
For a planar example, let $M$ be $\R^2$ with the integer points on the $x$-axis removed.
So, $M$ has infinitely many ends, exactly one of which is not isolated.
Let $r$ and $s$ be non-parallel rays in the positive and negative $y$-axes respectively.
Let $t$ and $u$ be parallel rays in the upper half-plane.
Attach an oriented $1$-handle at infinity to $M$.
Let $N$ be the result using the non-parallel rays $r$ and $s$, and
let $N'$ be the result using the parallel rays $t$ and $u$.
Then $N$ and $N'$ are not homeomorphic since $N'$ has one nonisolated end whereas $N$ has two.
\item Let $X$ be the $2$-disk $D^2$ with three points removed from its boundary.
So, $X$ has three noncompact boundary components and three ends.
Each end of $X$ has genus zero and an open neighborhood homeomorphic to $[0,1]\times\R$.
Let $Y$ be obtained from $X$ by connect summing a sequence of tori converging to one end.
The surface $Y$ is depicted on the left in Figure~\ref{fig:endsumexncb}
where the zeros indicate genus zero ends, $\infty$ indicates the infinite genus end,
and the segment indicates a ray.
\begin{figure}[htb!]
	\centering
	\subfigure[Surface $M=Y\sqcup Y$ to be end summed along indicated rays.]
	{
	    \label{fig:endsumexncb1}
	    \includegraphics[scale=0.9]{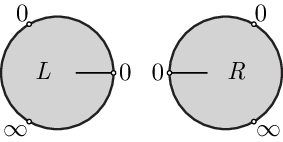}
	}
	\hspace{0.25cm}
	\subfigure[End sum $N$ using an oriented $1$-handle.]
	{
	    \label{fig:endsumexncb2}
	    \includegraphics[scale=0.9]{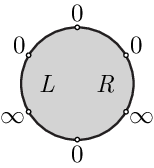}
	}
	\hspace{0.25cm}
	\subfigure[End sum $N'$ using a non-oriented $1$-handle.]
	{
	    \label{fig:endsumexncb3}
	    \includegraphics[scale=0.9]{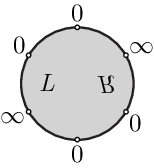}
	}
	\caption{Orientation of the $1$-handle at infinity is relevant for end sums of surfaces with noncompact boundary components.}
	\label{fig:endsumexncb}
\end{figure}
The surface $M=Y\sqcup Y$ is the disjoint union of two copies of $Y$.
End sum $M$ along the indicated rays.
Let $N$ be the result using an oriented $1$-handle, and
let $N'$ be the result using a non-oriented $1$-handle.
The surfaces $N$ and $N'$ are shown in Figure~\ref{fig:endsumexncb},
where the letters $L$ and $R$ are included to display orientations.
Both $N$ and $N'$ have six ends (four of genus zero, two of infinite genus, and two extraordinary both of genus zero).
Here, $\card{\E{N}}=\card{\E{M}}=\card{\E{N'}}$.
Notice that $N$ and $N'$ are not homeomorphic---$N$ has two noncompact boundary components
that point only to genus zero ends of $M$ and point to a common end of $M$,
whereas $N'$ does not have such boundary components.
Up to homeomorphism, there are exactly two end sums, namely $N$ and $N'$, of $M$ along the specific ends just used
since ray choice is irrelevant at those ends.
To obtain similar examples with $M$ non-orientable,
remove an open disk from the interior of $Y$ and glue in a crosscap.
Define $N$ and $N'$ as before.
Though both components of $M$ are non-orientable, and often orientability of the $1$-handle is not relevant
when $M$ is non-orientable, the end sums $N$ and $N'$ remain non-homeomorphic (the same argument still applies).
Thus, for end sums of surfaces with noncompact boundary components, orientability of the $1$-handle is relevant.
\end{enumerate}

Example~\ref{deex} showed that Theorem~\ref{mainthm} is false without the distinct ends hypothesis.
To remove that hypothesis, appropriate replacement hypotheses would be necessary.
Our proof of Theorem~\ref{mainthm} proceeds by studying end invariants of the extraordinary end of $N$
and then applying the classification of noncompact surfaces with compact boundary as proved by Richards~\cite{richards}.
One may attempt to remove from Theorem~\ref{mainthm} the hypothesis that $\partial M$ is compact by instead
using the classification of noncompact surfaces with possibly noncompact boundary
due to Brown and Messer~\cite{brownmesser}.
Even with that approach to the case where $\partial M$ is noncompact,
one must understand how ray choice affects the number of extraordinary ends
and the number of ends of $N$, and how the orientation of the $1$-handle affects $N$.
The examples above indicate that those may be subtle questions.

To circumvent those nuances, the present paper focuses on the case where a single $1$-handle at infinity is attached to
a surface $M$ with compact boundary along rays pointing to distinct ends of $M$.
In that case, there is a unique extraordinary end of $N=M\cup \pa{1\tn{-handle}}$ and $\card{\E{N}}=\card{\E{M}}-1$.
We allow $M$ to be disconnected and $\partial M$ to be empty.
Without loss of generality, it suffices to consider two cases: (i) $M$ is connected,
and (ii) $M$ has two connected components and the $1$-handle connects them.
The latter operation is the end sum of the two components of $M$.

This paper is organized as follows.
Section~\ref{sec:endsIntroduction} defines an end and recalls the theory of ends sufficient for our purposes.
We have included that material to help make this paper more self-contained and accessible.
Section~\ref{sec:endSumIntroduction} defines end sum and $1$-handle addition at infinity,
sets up notation for those operations, and proves that those operations indeed yield manifolds
(we are unaware of a published proof of this fact).
Section~\ref{sec:classification} recalls the classification of noncompact surfaces with compact boundary,
including generalized genus, parity, end invariants, and orientability.
Section~\ref{sec:mainTheorem} proves our main result---Theorem~\ref{thm:endSumUniquenessPLCase}---for \PL\
surfaces by studying how each of the following end invariants are affected by the addition of a $1$-handle at infinity:
the space of ends, boundary, orientability, genus, and parity.
Several results in Section~\ref{sec:mainTheorem} are proved more generally for $n$-manifolds.
In particular, Lemma~\ref{lem:partEndSumEnds} shows that the space of ends of $N=M\cup \pa{1\tn{-handle}}$
is the quotient space of the space of ends of $M$ by identifying the ends of $M$ along which the $1$-handle is attached
(see Lemma~\ref{lem:partEndSumEnds} for a more precise statement).
Section~\ref{sec:rayUniquenessMain} provides an alternative proof of Theorem~\ref{thm:endSumUniquenessPLCase}
using Brown and Messer's~\cite{brownmesser} classification of noncompact surfaces.
That approach also yields a ray uniqueness result for surfaces---see Theorem~\ref{thm:rayUniqueness}.
Lastly, Section~\ref{sec:secondTheorem} explains how to extend Theorem~\ref{thm:endSumUniquenessPLCase}
to \TOP\ and \DIFF\ surfaces.

We use the following conventions where
$X$ is a topological space, $A$ is any subspace of $X$ (denoted $A\subset X$), and $M$ is a manifold.
\begin{itemize}
    \item $\closr{A}$ denotes the topological \defword{closure} of $A$ in $X$.
    \item $\intr{A}$ denotes the topological \defword{interior} of $A$ in $X$.
    %\item $\topbd{A}$ denotes the topological \defword{frontier} of $A$ in $X$: $\topbd{A} = \closr{A} \setminus \intr{A}$.
    \item $\R_+$ denotes $[0,\infty)$ and $\mathbb{R}^n_+$ denotes $[0, \infty) \times \mathbb{R}^{n-1}$.
    \item $D^n$ denotes the closed $n$-disk.
    \item \CAT\ denotes one of the manifold categories: \TOP, \PL, or \DIFF.
    \item \CAT\ manifolds are Hausdorff and paracompact, possibly with boundary.
		\item A connected manifold without boundary is \defword{closed} provided it is compact and is \defword{open} provided it is noncompact.
    \item $M \cong M'$ denotes \defword{isomorphism} of \CAT\ manifolds.
    \item $\bd{M}$ denotes the \defword{manifold boundary} of $M$.
    \item $\E{X}$ denotes the \defword{space of ends} of $X$.
    \item If $\alpha$ is an end of $X$ and $K$ is a compact subspace of $X$,
					then $\enbd{\alpha}{K}$ denotes the set of all ends $\beta$ of $X$ for which $\beta(K) = \alpha(K)$.
    \item $b(M) = \piz{\bd{M}}$ denotes the \defword{number of boundary components} of $M$.
    \item $g(M)$ denotes the \defword{genus} of a surface $M$ and $\parity{M}$ denotes the parity of $M$.
\end{itemize}

\noindent\textbf{Acknowledgments} The authors are grateful to Craig Guilbault for suggesting the main problem studied herein
and to Ric Ancel for suggesting a fruitful alternative approach to prove the main theorem based on uniqueness of rays in a surface.
The authors also thank an anonymous referee for several useful comments.

\section{Ends of Spaces}\label{sec:endsIntroduction}

Loosely speaking, an end of a space may be thought of as an infinity of the space.
For example, the closed interval has no ends, a ray has one end, and the real line has two ends.
Figure~\ref{fig:endExamples} shows three manifolds and their ends.
\begin{figure}[htb!]
	\centering
	\subfigure[The real line has two ends.]
	{
	    \label{fig:rends}
	    \includegraphics[scale=0.75]{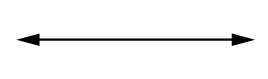}
	}
	\hspace{0.25cm}
	\subfigure[The thrice punctured sphere has three ends.]
	{
	    \label{fig:thriceends}
	    \includegraphics[scale=0.75]{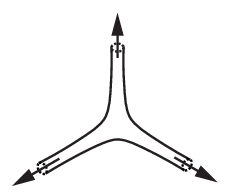}
	}
	\hspace{0.25cm}
	\subfigure[The closed disk has no ends.]
	{
	    \label{fig:diskends}
	    \includegraphics[scale=0.9]{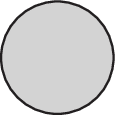}
	}
	\caption{Manifolds with their ends indicated by arrows.}
	\label{fig:endExamples}
\end{figure}
Each (suitably nice) space is compact if and only if it has no ends.
The set of all ends of a space may be equipped with a natural topology.
Ends and the space of ends play fundamental roles in the study of noncompact spaces.
A prime example is Whitehead's open, contractible 3-manifold,
the first example of an open, contractible manifold not isomorphic to Euclidean space.
It may be distinguished from $\R^3$ by properties of its end (see Guilbault~\cite[pp.~6--7]{guilbault}).
The space of ends is also an essential ingredient in Richards'~\cite{richards}
classification of noncompact surfaces which we review in Section~\ref{sec:classification}.

The theory of ends dates back to Freudenthal and Hopf in the 1930's.
For interesting further reading, see Freudenthal's original paper~\cite{freudenthal},
Siebenmann's thesis~\cite[pp.~8--12]{siebenmannthesis},
and Guilbault's chapter~\cite{guilbault}.
As we are focused on manifolds, we restrict our study to spaces that satisfy the following condition.

\begin{definition}
	A topological space $X$ is \defword{nice for ends} provided $X$ is Hausdorff, locally compact, $\sigma$-compact, connected, and locally connected.
\end{definition}

We adopt the convention that a space $X$ is \defword{connected} provided
it has exactly two subspaces that are both open and closed in $X$,
namely $X$ and $\varnothing$.
In particular, the empty space is neither connected nor disconnected.

Connected manifolds are nice for ends, as are connected, locally finite simplicial complexes.
Each space that is nice for ends is necessarily paracompact.
However, it is not necessarily separable, metrizable, or even first-countable.
For example, the product of uncountably many copies of $[0,1]$ is compact and nice for ends, but does not satisfy those three properties.

Throughout this section---unless explicitly stated otherwise---$X$ is a space nice for ends.

\begin{definition}\label{defend}
	An \defword{end} of $X$ is any function $\epsilon$ defined on the collection of compact subspaces of $X$ such that:
	for each compact $K \subset X$, the output $\epsilon(K)$ is a connected component (hence nonempty) of $X \setminus K$,
	and if $K_1 \subset K_2$, then  $\epsilon(K_2) \subset \epsilon(K_1)$.
	Let $\E{X}$ denote the set of all ends of $X$.
\end{definition}

For example, $\R$ has two ends $\epsilon_-$ and $\epsilon_+$.
Given a compact $K \subset \R$, $\epsilon_-(K)$ is the connected component of
$\R \setminus K$ that contains elements less than $K$,
and $\epsilon_+(K)$ is the connected component of $\R \setminus K$ that contains elements greater than $K$.
One may verify that $\epsilon_-$ and $\epsilon_+$ are ends of $\R$ and, in fact, are the only ends of $\R$.

We recall some fundamental properties of ends. Several proofs will be left to the reader.

\begin{lemma} \label{lem:intersectionOfEndNeighborhoods}
	If $\epsilon$ is an end of $X$, and $K$ and $K'$ are compact subspaces of $X$, then $\epsilon(K) \cap \epsilon(K') \neq \varnothing$.
\end{lemma}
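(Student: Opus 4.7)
The plan is to exploit the monotonicity property built into the definition of an end, using the fact that the union of two compact subspaces of $X$ is again compact. Concretely, I would set $L = K \cup K'$ and observe that $L$ is a compact subspace of $X$, so $\epsilon(L)$ is defined and is, by Definition~\ref{defend}, a connected component of $X \setminus L$; in particular $\epsilon(L)$ is nonempty.

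Next I would apply the monotonicity clause of Definition~\ref{defend} twice. Since $K \subset L$, the definition forces $\epsilon(L) \subset \epsilon(K)$, and since $K' \subset L$, it forces $\epsilon(L) \subset \epsilon(K')$. Combining these inclusions gives
\[
\varnothing \neq \epsilon(L) \subset \epsilon(K) \cap \epsilon(K'),
\]
which is exactly the desired conclusion.

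There is really no obstacle here; the lemma is a direct unpacking of the two axioms defining an end, with compactness of finite unions of compact sets as the only auxiliary fact. I do not expect to need local compactness, $\sigma$-compactness, or connectedness of $X$ at this stage, so the argument will not use the full strength of ``nice for ends.'' The same line of reasoning generalizes immediately to any finite collection $K_1, \dots, K_n$ of compact subspaces via the single compact subspace $K_1 \cup \cdots \cup K_n$, which may be worth recording for later use.
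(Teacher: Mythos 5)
Your argument is correct and is exactly the standard one the paper intends (the proof is left to the reader there): take the compact set $L = K \cup K'$ and apply the monotonicity clause of Definition~\ref{defend} twice to get $\varnothing \neq \epsilon(L) \subset \epsilon(K) \cap \epsilon(K')$. Nothing further is needed.
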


\begin{lemma} \label{lem:compactSpacesHaveNoEnds}
	If $X$ is compact, then $X$ has no ends.
\end{lemma}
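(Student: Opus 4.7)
The plan is to derive a contradiction directly from the definition by plugging in the largest possible compact set, namely $X$ itself. Since $X$ is compact, it qualifies as a compact subspace of $X$, so any hypothetical end $\epsilon$ of $X$ must assign to $K = X$ a value $\epsilon(X)$ that, by Definition~\ref{defend}, is a connected component of $X \setminus X = \varnothing$.

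Next, I would invoke the paper's convention that a space is connected exactly when it has precisely two clopen subsets. Under this convention, the empty space is not connected, and in particular has no nonempty connected subsets at all---hence no connected components. Therefore no candidate value for $\epsilon(X)$ exists, contradicting the fact that $\epsilon(X)$ is required by the definition to be a (nonempty) connected component of $X \setminus X$.

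This shows no function $\epsilon$ satisfying the conditions of Definition~\ref{defend} can exist, so $\E{X} = \varnothing$. The proof is essentially a one-liner; the only mild subtlety---and the one place I would be careful---is to explicitly flag the paper's convention that $\varnothing$ is not connected, since under a more permissive convention one might worry about vacuous components. No deeper obstacle arises.
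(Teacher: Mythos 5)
Your proof is correct: taking $K = X$ (which is compact by hypothesis) forces $\epsilon(X)$ to be a nonempty connected component of $X \setminus X = \varnothing$, which is impossible, so $\E{X} = \varnothing$. The paper leaves this proof to the reader, and your one-line argument is exactly the intended one, with the careful note about the convention that $\varnothing$ has no connected components being a welcome touch.
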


The converse of Lemma~\ref{lem:compactSpacesHaveNoEnds} also holds---see Corollary~\ref{cor:noncompactImpliesHasAnEnd} below.

\begin{definition}
	A subspace of $X$ is \defword{bounded} provided its closure in $X$ is compact, and \defword{unbounded} otherwise.
\end{definition}

This terminology aligns with the definition of a bounded subset of Euclidean space,
although in general metric spaces the notions can differ.

\begin{lemma} \label{lem:endsAreUnbounded}
	Let $\epsilon$ be an end of $X$. If $K\subset X$ is compact, then $\epsilon(K)$ is an unbounded subspace of $X$.
\end{lemma}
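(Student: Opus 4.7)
The plan is to argue by contradiction: assume $\epsilon(K)$ is bounded and exhibit an end value that would have to be simultaneously nonempty and empty.

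Concretely, suppose $\closr{\epsilon(K)}$ is compact. Then $K' = K \cup \closr{\epsilon(K)}$ is a compact subspace of $X$ containing $K$, so Definition~\ref{defend} applies to $K'$ and yields a nonempty connected component $\epsilon(K')$ of $X \setminus K'$. From $K \subset K'$ and the nesting condition on $\epsilon$, I would immediately get
\[
\epsilon(K') \subset \epsilon(K).
\]
On the other hand, $\epsilon(K') \subset X \setminus K' \subset X \setminus \closr{\epsilon(K)} \subset X \setminus \epsilon(K)$, so $\epsilon(K')$ is disjoint from $\epsilon(K)$. Combining the two containments forces $\epsilon(K') = \varnothing$, contradicting the nonemptiness clause of Definition~\ref{defend}.

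The only things to verify carefully are that $K'$ is indeed compact (immediate, since it is a finite union of compact sets in the Hausdorff space $X$) and that the two containments for $\epsilon(K')$ are genuinely incompatible, which uses only the trivial fact $\epsilon(K) \subset \closr{\epsilon(K)}$. There is no real obstacle here; the argument is essentially a one-line application of the defining nesting property of an end together with the observation that replacing $K$ by $K \cup \closr{\epsilon(K)}$ strictly ``uses up'' all of $\epsilon(K)$. No niceness hypothesis beyond Hausdorffness is actually needed in this particular step, so the lemma will hold in considerable generality.
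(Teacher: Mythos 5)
Your proof is correct, and since the paper leaves this lemma to the reader it is in fact the intended argument: enlarging to $K' = K \cup \closr{\epsilon(K)}$, using the nesting property to get $\epsilon(K') \subset \epsilon(K)$ while $\epsilon(K')$ must miss $K' \supset \epsilon(K)$, forces $\epsilon(K') = \varnothing$, contradicting Definition~\ref{defend}. One tiny remark: compactness of $K'$ needs no Hausdorff hypothesis at all, since a finite union of compact subspaces is compact in any topological space.
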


\begin{lemma} \label{lem:unboundedComponents}
	Let $K \subset X$ be compact. Then, $X \setminus K$ has finitely many unbounded connected components.
	Furthermore, if $A$ is the union of all bounded connected components of $X \setminus K$,
	then $K \cup A$ is compact and $X \setminus (K \cup A)$ has only unbounded connected components.
\end{lemma}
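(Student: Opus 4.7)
The plan is to prove both statements by enclosing $K$ in a well-chosen compact neighborhood and controlling components of $X \setminus K$ by how they interact with its boundary. Using local compactness and local connectedness together, I would first cover $K$ by finitely many connected open sets each having compact closure, and take $L$ to be the closure of their union; then $L$ is compact, $K$ lies in the interior $L^{\circ}$, and the frontier $F = L \setminus L^{\circ}$ is a compact subset of $X$ disjoint from $K$.

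The heart of the argument is the claim that every unbounded component $C$ of $X \setminus K$ meets $F$. The open sets $L^{\circ}$ and $X \setminus L$ are disjoint with union $X \setminus F$, so if $C$ missed $F$ then connectedness would force $C \subset L^{\circ}$ or $C \subset X \setminus L$. The first option is ruled out because $L^{\circ}$ has compact closure while $C$ is unbounded. In the second, $C$ is also a component of the open set $X \setminus L$, and since $X$ is connected and locally connected, a component of a proper open subset of $X$ is not clopen in $X$, so its frontier is nonempty and lies in $L$; any such frontier point $q$ must in fact lie in $F$ (it cannot be in $L^{\circ}$, whose neighborhoods miss $C$), hence in $X \setminus K$, and therefore in some component of $X \setminus K$---which, being open and meeting $\overline{C}$, must equal $C$ itself. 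That contradicts $q \in L$ while $C \cap L = \varnothing$, so this subcase is also impossible. Thus $C \cap F \ne \varnothing$.

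Finiteness follows cleanly: each $p \in F$ has a connected open neighborhood $V_p \subset X \setminus K$ (by local connectedness, since $K$ is closed), and because $V_p$ is connected it meets at most one component of $X \setminus K$. Covering the compact set $F$ by finitely many such $V_p$ then shows that only finitely many components of $X \setminus K$ meet $F$. In particular, there are only finitely many unbounded components, and the same argument bounds the number of bounded components $D_1, \ldots, D_m$ meeting $F$; every other bounded component lies entirely in $L^{\circ} \subset L$.

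For the second assertion, $A \subset L \cup \overline{D_1} \cup \cdots \cup \overline{D_m}$, which is compact because each $\overline{D_j}$ is compact by definition of boundedness. The union of the finitely many unbounded components of $X \setminus K$ is open, so $K \cup A$ is its complement in $X$ and therefore closed; as a closed subset of a compact set it is itself compact. Finally, any component of $X \setminus (K \cup A)$ is connected in $X \setminus K$, so it lies in some component of $X \setminus K$; that component cannot be bounded (else it would be contained in $A$), and conversely each unbounded component of $X \setminus K$ remains a component of $X \setminus (K \cup A)$ by maximality---so the components of $X \setminus (K \cup A)$ are precisely the unbounded components of $X \setminus K$, all of which are unbounded. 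The main obstacle I expect is the subcase $C \subset X \setminus L$ in the key claim: ruling it out genuinely uses both connectedness and local connectedness of $X$, not just local information near $K$, and one must resist the temptation to assume $\overline{C}$ touches $K$ automatically.
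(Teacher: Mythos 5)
Your proof is correct and follows essentially the same approach as the paper's (hinted) argument: both force every unbounded component of $X \setminus K$ to meet the compact frontier of a compact neighborhood of $K$, and your covering of $F$ by connected open sets contained in $X \setminus K$ is just a repackaging of the paper's suggested finite subcover of $\overline{V}$ by $V$ and the components of $X \setminus K$, with the second assertion handled the same way in both. The only point worth adding is the degenerate case $K = \varnothing$, where $L = \varnothing$ and your ``component of a proper open subset'' step has nothing to bite on; dispose of it separately by noting the lemma is immediate there.
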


For the proof of Lemma~\ref{lem:unboundedComponents}, we offer this hint:
use local compactness to construct a bounded open neighborhood $V$ of $K$.
Then, look at the open cover of $\overline{V}$ consisting of $V$ and all components of $X \setminus K$.

\begin{example}
Let $X=D^2$. For each integer $n \geq 1$, let $p_n = (0, 2^{-n})$ and let $U_n$ be an open disk of radius $4^{-n}$ centered at $p_n$.
The space $K = X \setminus \cup_n U_n$ (a nonsurface) is compact, but each $U_n$ is a bounded connected component of $X \setminus K$.
Thus, although there are always finitely many unbounded connected components of $X \setminus K$,
there may be infinitely many bounded components.
\end{example}

The notion of a \textit{neighborhood of an end} is used to define a topology on the set of ends of $X$.

\begin{definition}
	Let $\epsilon$ be an end of $X$.
	A \defword{neighborhood} of $\epsilon$ is a subspace $A \subset X$ such that
	there exists some compact subspace $K \subset X$ for which $\epsilon(K) \subset A$.
\end{definition}

For example, consider $X = \R$ with ends $\epsilon_-$ and $\epsilon_+$.
A subspace of $\R$ is a neighborhood of $\epsilon_-$ if and only if it contains $(-\infty, a)$ for some $a \in \R$.
Similarly, a subspace of $\R$ is a neighborhood of $\epsilon_+$ if and only if it contains $(b, \infty)$ for some $b \in \R$.

\begin{lemma}
	If $K \subset X$ is compact, then $X \setminus K$ is a neighborhood of every end of $X$.
\end{lemma}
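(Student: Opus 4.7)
The plan is to observe that the statement follows almost immediately from unwinding the definition of an end-neighborhood, with $K$ itself serving as the witness compact set. Specifically, given an end $\epsilon$ of $X$ and a compact subspace $K\subset X$, I need to exhibit a compact $K'\subset X$ for which $\epsilon(K')\subset X\setminus K$.

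The natural choice is $K' = K$. By Definition~\ref{defend}, $\epsilon(K)$ is a connected component of $X\setminus K$, and in particular $\epsilon(K)\subset X\setminus K$. Thus taking $K' = K$ in the definition of neighborhood immediately yields that $X\setminus K$ is a neighborhood of $\epsilon$. Since $\epsilon$ was an arbitrary end, the conclusion follows.

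The main (and only) obstacle here is rhetorical rather than mathematical: the statement is essentially a tautology once one compares the two definitions side by side, so the entire work lies in reminding the reader of the definitions and pointing out that $K' = K$ does the job. I would keep the proof to a single sentence or two, matching the terse style the authors are using for the foundational lemmas in this section.
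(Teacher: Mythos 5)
Your proof is correct: taking $K'=K$ and noting that $\epsilon(K)$ is by definition a component of $X\setminus K$, hence contained in $X\setminus K$, is exactly the intended (and essentially only) argument. The paper leaves this lemma's proof to the reader, and your one-line unwinding of the definitions is precisely what was expected.
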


\begin{lemma}
	If $\epsilon_1$ and $\epsilon_2$ are distinct ends of $X$,
	then there exist disjoint neighborhoods $A_1$ of $\epsilon_1$ and $A_2$ of $\epsilon_2$ (so, $A_1 \cap A_2 = \varnothing$).
\end{lemma}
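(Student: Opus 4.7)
The plan is to extract a single compact set on which the two ends disagree, and then use disjointness of distinct connected components. This is essentially a direct application of the definitions, so the proof should be short.

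First, I would invoke the definition of an end as a function on compact subspaces of $X$. Since $\epsilon_1 \neq \epsilon_2$ as such functions, there must exist at least one compact subspace $K \subset X$ on which they disagree, that is, with $\epsilon_1(K) \neq \epsilon_2(K)$.

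Next, I would recall that, by Definition~\ref{defend}, both $\epsilon_1(K)$ and $\epsilon_2(K)$ are connected components of $X \setminus K$. Distinct connected components of any topological space are disjoint, so $\epsilon_1(K) \cap \epsilon_2(K) = \varnothing$. Setting $A_i = \epsilon_i(K)$ for $i = 1, 2$, the inclusion $\epsilon_i(K) \subset A_i$ holds trivially with the witnessing compact set $K$, so each $A_i$ is a neighborhood of $\epsilon_i$, and $A_1 \cap A_2 = \varnothing$ as required.

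There is essentially no obstacle here; the only subtlety is noticing that one does not need separate compact sets $K_1, K_2$ witnessing each end individually and then forming $K = K_1 \cup K_2$ to compare them (although that would also work via the monotonicity condition $K_1 \subset K \Rightarrow \epsilon_i(K) \subset \epsilon_i(K_1)$). The cleaner observation is that the definition of distinctness of functions directly hands us a single compact set where the two ends select different, and hence disjoint, components.
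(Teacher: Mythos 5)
Your proof is correct, and since the paper leaves this lemma to the reader, your argument is exactly the intended one: distinctness of the end functions yields a compact $K$ with $\epsilon_1(K) \neq \epsilon_2(K)$, and these are disjoint (being distinct components of $X \setminus K$) neighborhoods of the respective ends. Nothing is missing.
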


An alternative definition of an end may given using a \textit{compact exhaustion}.
This important and equivalent definition is useful for constructing and visualizing ends.

\begin{definition}
	A \defword{compact exhaustion} of a topological space $X$ is
	a sequence $(K_i)$ of compact subspaces $K_i\subset X$ such that $K_i \subset K_{i+1}$ for all $i$ and
	\[\bigcup_{i=1}^\infty \intr{K_i} = X\]
\end{definition}

For example, $K_i = [-i, i]$ is a compact exhaustion of $\R$.
See Figure~\ref{fig:exhaustionexamples} for two more examples.
\begin{figure}[htb!]
	\centering
	\subfigure[Thrice punctured sphere.]
	{
	    \label{fig:thriceexhaust}
	    \includegraphics[scale=0.75]{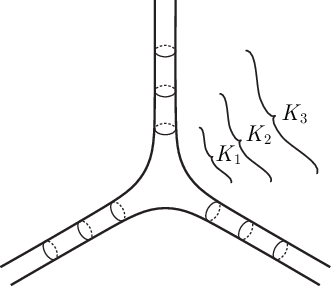}
	}
	\hspace{0.25cm}
	\subfigure[Infinite tree.]
	{
	    \label{fig:inftreeexhausr}
	    \includegraphics[scale=0.8]{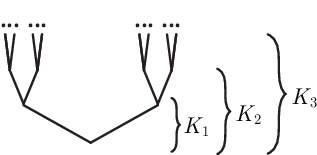}
	}
	\caption{Compact exhaustions.}
	\label{fig:exhaustionexamples}
\end{figure}
To make use of compact exhaustions, we need their existence.

\begin{lemma}
The (nice for ends) space $X$ has a compact exhaustion.
\end{lemma}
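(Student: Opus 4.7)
The plan is to exploit $\sigma$-compactness together with local compactness, building the exhaustion by inductively ``thickening'' a countable compact cover. Connectedness and local connectedness play no role in this particular argument; only the Hausdorff, locally compact, and $\sigma$-compact hypotheses are needed.

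The enabling fact I would isolate first is the following standard consequence of local compactness and Hausdorffness: every compact subset $C \subset X$ is contained in an open set $U$ whose closure $\overline{U}$ is compact. This is proved by covering $C$ with finitely many open neighborhoods having compact closures (each point of $C$ has such a neighborhood by local compactness; take a finite subcover by compactness of $C$) and letting $U$ be the union of that finite subcollection. Hausdorffness enters via the usual fact that compact subsets of Hausdorff spaces are closed, so that $\overline{U}$ is the finite union of the chosen compact closures.

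With that lemma in hand, I would invoke $\sigma$-compactness to write $X = \bigcup_{i=1}^\infty C_i$ with each $C_i$ compact, and then recursively define $K_i$ as follows. Let $K_1 = \overline{U_1}$ where $U_1$ is an open set with $C_1 \subset U_1$ and $\overline{U_1}$ compact, produced by the enabling lemma. Given $K_i$ compact, apply the enabling lemma to the compact set $K_i \cup C_{i+1}$ to obtain an open set $U_{i+1}$ with $K_i \cup C_{i+1} \subset U_{i+1}$ and $\overline{U_{i+1}}$ compact, and set $K_{i+1} = \overline{U_{i+1}}$. Then $K_i \subset U_{i+1} \subset \intr{K_{i+1}}$, which is strictly stronger than the nesting $K_i \subset K_{i+1}$ required by the definition. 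Moreover, for every $i$ we have $C_i \subset K_i \subset \intr{K_{i+1}}$, so
\[
X \;=\; \bigcup_{i=1}^\infty C_i \;\subset\; \bigcup_{i=1}^\infty \intr{K_i} \;\subset\; X,
\]
forcing equality.

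I do not anticipate any real obstacle: the only nontrivial ingredient is the enabling lemma, which is routine point-set topology, and the recursion takes care of both the nesting and the covering condition simultaneously. If anything required care, it would be verifying the inclusion $K_i \subset \intr{K_{i+1}}$ (rather than merely $K_i \subset K_{i+1}$), but this comes for free from the construction since $U_{i+1}$ is open and $U_{i+1} \subset K_{i+1}$ implies $U_{i+1} \subset \intr{K_{i+1}}$.
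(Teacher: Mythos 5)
Your proof is correct and follows essentially the same route as the paper: both use $\sigma$-compactness to get a countable compact cover and local compactness (with Hausdorffness) to thicken compact sets into compact neighborhoods, then take nested unions. The only cosmetic difference is that your recursive thickening of $K_i \cup C_{i+1}$ yields the slightly stronger nesting $K_i \subset \intr{K_{i+1}}$, whereas the paper thickens each piece of the cover separately and takes partial unions, which already suffices for the definition.
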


\begin{proof}
As $X$ is $\sigma$-compact, there exist compact subspaces $L_1$, $L_2$, $\ldots$ of $X$ such that $\cup_j L_j = X$.
As $X$ is locally compact, each $L_j$ has a compact neighborhood $N_i$ in $X$ (meaning $L_j\subset \intr{N_i}$).
Define $K_i = \cup_{j \leq i} N_j$, which is compact.
For each $x \in X$, there exists $j$ such that $x \in L_j$.
So, $x \in \intr{K_i}$ and $(K_i)$ is a compact exhaustion of $X$.
\end{proof}

A compact exhaustion $(K_i)$ of $X$ describes $X$ as a limit of the compact sets $K_i$.
It may also be used to define an end of $X$ as a limit of unbounded components of the complements $X \setminus K_i$.

Fix a compact exhaustion $(K_i)$ of $X$.
Define $\mathcal{E}'(X)$ to be the set of all sequences $(V_i)$ such that $V_i$ is a connected (hence nonempty)
component of $X \setminus K_i$, and $V_{i+1} \subset V_i$.

\begin{lemma}
	There is a one-to-one correspondence between $\E{X}$ and $\mathcal{E}'(X)$
	that associates to each end $\epsilon$ the sequence $V_i = \epsilon(K_i)$.
\end{lemma}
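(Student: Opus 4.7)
Let $\Phi \colon \E{X} \to \mathcal{E}'(X)$ denote the proposed map $\Phi(\epsilon) = (\epsilon(K_i))$. I would prove the lemma in three steps: (a) $\Phi$ is well-defined; (b) $\Phi$ is injective; (c) $\Phi$ is surjective, by exhibiting an explicit inverse.

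For well-definedness, note that by Definition~\ref{defend}, each $\epsilon(K_i)$ is a connected component of $X \setminus K_i$, and since $K_i \subset K_{i+1}$ the nesting $\epsilon(K_{i+1}) \subset \epsilon(K_i)$ holds by definition. Hence $(\epsilon(K_i)) \in \mathcal{E}'(X)$. The crucial tool for the other two steps is the following observation: for every compact $K \subset X$, there exists $i$ with $K \subset \intr{K_i} \subset K_i$. Indeed, the open sets $\intr{K_i}$ cover $X$, hence cover $K$, and by compactness finitely many suffice; the nesting of the $K_i$ lets us choose a single index.

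For injectivity, suppose $\epsilon(K_i) = \epsilon'(K_i)$ for all $i$, and let $K \subset X$ be compact. Pick $i$ with $K \subset K_i$. By monotonicity of ends, $\epsilon(K_i) \subset \epsilon(K)$ and $\epsilon'(K_i) \subset \epsilon'(K)$. Since $\epsilon(K_i) = \epsilon'(K_i)$ is nonempty, the components $\epsilon(K)$ and $\epsilon'(K)$ of $X \setminus K$ share a point, hence are equal.

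For surjectivity, given $(V_i) \in \mathcal{E}'(X)$, define $\epsilon(K)$ for each compact $K \subset X$ as follows: choose $i$ with $K \subset K_i$, and let $\epsilon(K)$ be the unique connected component of $X \setminus K$ that contains $V_i$ (which exists because $V_i$ is connected and $V_i \subset X \setminus K_i \subset X \setminus K$). This is independent of the choice of $i$: if also $K \subset K_j$ with $j \geq i$, then $V_j \subset V_i$, so $V_j$ lies in the same component of $X \setminus K$ as $V_i$. To verify the monotonicity axiom, take compact $K \subset K'$ and choose $i$ with $K' \subset K_i$; then $V_i$ witnesses that $\epsilon(K')$ and $\epsilon(K)$ meet, and since $\epsilon(K')$ is connected and contained in $X \setminus K$, it lies in the single component $\epsilon(K)$. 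Finally, $\epsilon(K_i) = V_i$ because $V_i$ is already a connected component of $X \setminus K_i$, so $\Phi(\epsilon) = (V_i)$, completing surjectivity.

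The only nonroutine point is the lemma that any compact $K$ is contained in some $K_i$; once that is in hand, the rest is bookkeeping with connected components. I do not anticipate any real obstacle.
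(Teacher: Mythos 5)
Your proposal is correct and follows essentially the same route as the paper: send $\epsilon$ to $(\epsilon(K_i))$, and construct the inverse by choosing, for each compact $L$, an index $i$ with $L \subset K_i$ and letting $\epsilon(L)$ be the component of $X \setminus L$ containing $V_i$. You have simply filled in the verifications (well-definedness, independence of the index, the monotonicity axiom, and the two compositions) that the paper leaves to the reader.
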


\begin{proof}
If $\epsilon \in \E{X}$, then the sequence $V_i = \epsilon(K_i)$ is in fact in $\mathcal{E}'(X)$.
On the other hand, given any sequence $(V_i) \in \mathcal{E}'(X)$, one may associate an end $\epsilon$ as follows.
For each compact $L\subset X$, choose $i$ such that $L \subset K_i$.
Then, $V_i$ is a connected component of $X \setminus K_i$ and is contained in a unique connected component $W$ of $X \setminus L$.
Define $\epsilon(L) = W$. One may verify that $\epsilon$ is well-defined, and that these associations are inverses.
\end{proof}

When a compact exhaustion $(K_i)$ is given for $X$,
we will often conflate an end of $X$ with the sequence $(V_i) \in \mathcal{E}'(X)$.
Some authors---for example Guilbault~\cite[$\S$~3.3]{guilbault}---take this sequence as the definition of an end of $X$.
For alternative definitions, see Geoghegan~\cite[$\S$~13.4]{geoghegan}
and Aramayona and Vlamis~\cite{av}.
As an example, consider $X = \R$ with the compact exhaustion $K_i = [-i, i]$.
The only sequences in $\mathcal{E}'(X)$ are $V_i = (-\infty, i)$ and $V_i = (i, \infty)$.
These represent the two ends of $\R$.
Using compact exhaustions, we give a straightforward proof that each space with no ends is compact.

\begin{lemma} \label{lem:existenceOfEnds}
	Fix a compact exhaustion $(K_i)$ of $X$.
	If $V_1 \supset V_2 \supset \dots \supset V_n$ is a finite sequence
	where each $V_i$ is an unbounded connected component of $X \setminus K_i$,
	then this sequence extends to an infinite sequence $(V_i)$ which represents an end of $X$.
\end{lemma}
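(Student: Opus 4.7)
The plan is to extend the finite sequence one step at a time by recursion, so the central technical claim to prove is the following: if $V$ is an unbounded connected component of $X \setminus K_i$, then there exists an unbounded connected component $V'$ of $X \setminus K_{i+1}$ with $V' \subset V$. Once this is established, starting from the given $V_n$ I obtain $V_{n+1}, V_{n+2}, \dots$ by iterating (countable dependent choice), and the resulting sequence $(V_i)_{i=1}^\infty$ lies in $\mathcal{E}'(X)$ by construction, hence represents an end of $X$ via the correspondence established in the preceding lemma.

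To prove the central claim, I would first identify the components of $V \setminus K_{i+1}$ with a subfamily of the components of $X \setminus K_{i+1}$. Since $V$ is open in $X$ (being a component of the open set $X \setminus K_i$ in a locally connected space) and $V \setminus K_{i+1}$ is open, its components are open in $X$. If $W$ is any component of $V \setminus K_{i+1}$, then $W$ is a connected subset of $X \setminus K_{i+1}$, so it lies in some component $W'$ of $X \setminus K_{i+1}$; conversely, $W'$ is connected in $X \setminus K_n$ and meets $V$, so $W' \subset V$, which forces $W' = W$. Thus the components of $V \setminus K_{i+1}$ are precisely those components of $X \setminus K_{i+1}$ that happen to lie in $V$.

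The next step is to rule out the possibility that all such components are bounded. By Lemma~\ref{lem:unboundedComponents} applied to $K_{i+1}$, the union $A$ of all bounded components of $X \setminus K_{i+1}$ satisfies that $K_{i+1} \cup A$ is compact. If every component of $V \setminus K_{i+1}$ were bounded, then $V \setminus K_{i+1} \subset A$, and therefore
\[
V \subset (V \cap K_{i+1}) \cup (V \setminus K_{i+1}) \subset K_{i+1} \cup A,
\]
so $\overline{V}$ would be a closed subset of the compact set $K_{i+1} \cup A$ and hence compact. This contradicts the unboundedness of $V$, so at least one component of $V \setminus K_{i+1}$ is unbounded; pick any such component as $V'$.

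The main obstacle is the identification step in the second paragraph: one must carefully use local connectedness of $X$ and the fact that $V$ is a component (not merely a connected subset) of $X \setminus K_i$ to pass freely between components of $V \setminus K_{i+1}$ and components of $X \setminus K_{i+1}$ contained in $V$. Once that is clean, everything else is bookkeeping: the recursion produces the required sequence, and the correspondence $\E{X} \leftrightarrow \mathcal{E}'(X)$ supplies the end.
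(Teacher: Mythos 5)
Your proposal is correct and follows essentially the same route as the paper: both reduce the lemma to the one-step claim that an unbounded component of $X \setminus K_i$ contains an unbounded component of $X \setminus K_{i+1}$, and both establish it via Lemma~\ref{lem:unboundedComponents}, using compactness of $K_{i+1}$ together with the bounded components (the paper picks a point outside that compact set directly, while you argue by contradiction that not all components of $V \setminus K_{i+1}$ can be bounded---an inessential difference). The only blemish is the typo ``$X \setminus K_n$'' where you mean $X \setminus K_i$ in the identification step.
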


\begin{proof}
It suffices to prove that if $V_i$ is an unbounded connected component of $X \setminus K_i$,
then there exists an unbounded connected component of $X \setminus K_{i+1}$ contained in $V_i$.
Using this, any finite initial sequence may be extended inductively to an infinite sequence.

Let $L$ be the union of $K_{i+1}$ and all of the bounded components of $X \setminus K_{i+1}$.
By Lemma~\ref{lem:unboundedComponents}, $L$ is compact.
As $V_i$ is unbounded, $V_i \setminus L \neq \varnothing$.
Let $y \in V_i \setminus L$.
Then $y$ is contained in some component $V_{i+1}$ of $X \setminus K_{i+1}$.
As $y \not\in L$, this component must be unbounded.
\end{proof}

\begin{corollary} \label{cor:existenceOfEnds2}
	If $K \subset X$ is compact and $V$ is an unbounded connected component of $X \setminus K$,
	then there exists an end $\epsilon \in \E{X}$ such that $\epsilon(K) = V$.
\end{corollary}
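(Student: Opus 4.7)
The plan is to reduce the corollary to Lemma~\ref{lem:existenceOfEnds} by fitting $V$ into the framework of a compact exhaustion. First I would fix any compact exhaustion $(K_i)$ of $X$ (which exists by the preceding lemma) and, using compactness of $K$ together with the covering $X=\bigcup_i \intr{K_i}$, choose an index $i_0$ with $K\subset K_{i_0}$. This places us in the setting where Lemma~\ref{lem:existenceOfEnds} can take over, provided I can locate an unbounded component of $X\setminus K_{i_0}$ that sits inside $V$.

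The main step is to produce that component. Because $K\subset K_{i_0}$, every connected component of $X\setminus K_{i_0}$ lies inside a unique component of $X\setminus K$, so the components of $X\setminus K_{i_0}$ contained in $V$ are precisely those whose union is $V\setminus K_{i_0}$. Now I would invoke Lemma~\ref{lem:unboundedComponents}: if $A$ denotes the union of all bounded components of $X\setminus K_{i_0}$, then $K_{i_0}\cup A$ is compact. If every component of $X\setminus K_{i_0}$ contained in $V$ were bounded, then $V$ would be contained in $K_{i_0}\cup A$ and hence would itself be bounded, contradicting the hypothesis. Therefore some unbounded component $W$ of $X\setminus K_{i_0}$ lies in $V$; this is the step I expect to be the crux, but it is short once Lemma~\ref{lem:unboundedComponents} is in hand.

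With $W$ secured, I would build a finite descending sequence to feed into Lemma~\ref{lem:existenceOfEnds}. For each $i<i_0$, let $V_i$ be the unique component of $X\setminus K_i$ containing the connected set $W$ (this is well defined since $K_i\subset K_{i_0}$, so $W\cap K_i=\varnothing$); each such $V_i$ is unbounded because it contains the unbounded set $W$, and $V_1\supset V_2\supset\cdots\supset V_{i_0}=W$ by construction. Lemma~\ref{lem:existenceOfEnds} then extends this finite chain to an infinite sequence representing an end $\epsilon$ of $X$.

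Finally, I would check that $\epsilon(K)=V$. By construction $\epsilon(K_{i_0})=W$, and the end conditions give $\epsilon(K)\supset \epsilon(K_{i_0})=W$. Since $\epsilon(K)$ is a connected component of $X\setminus K$ meeting $V$ (in $W$) and $V$ is itself a component, $\epsilon(K)=V$, completing the proof.
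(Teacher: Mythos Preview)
Your argument is correct, but the paper's proof takes a shorter route. Rather than fixing an arbitrary exhaustion and then locating an unbounded component $W$ of $X\setminus K_{i_0}$ inside $V$ via Lemma~\ref{lem:unboundedComponents}, the paper simply builds the exhaustion so that $K$ is its first term: start with any exhaustion $(K_i)$ having $K_1=\varnothing$, set $K_i'=K_i\cup K$, and observe that $(K_i')$ is again a compact exhaustion with $K_1'=K$. Then $V$ itself is an unbounded component of $X\setminus K_1'$, and Lemma~\ref{lem:existenceOfEnds} applied to the single-term sequence $(V)$ immediately produces an end $\epsilon$ with $\epsilon(K)=\epsilon(K_1')=V$. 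Your approach has the virtue of showing explicitly how to work with a fixed exhaustion not adapted to $K$, and it exercises Lemma~\ref{lem:unboundedComponents} directly; the paper's trick of modifying the exhaustion absorbs that work into a one-line observation and avoids the auxiliary component $W$, the finite chain $V_1\supset\cdots\supset V_{i_0}$, and the final verification that $\epsilon(K)=V$.
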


\begin{proof}
Let $(K_i)$ be any compact exhaustion of $X$ such that $K_1 = \varnothing$, and let $K_i' = K_i \cup K$ for each $i$.
Note that $(K_i')$ is a compact exhaustion of $X$ and $K_1' = K$.
Apply Lemma~\ref{lem:existenceOfEnds} to the single-term initial sequence $(V)$.
\end{proof}

\begin{corollary} \label{cor:noncompactImpliesHasAnEnd}
	If $X$ is noncompact, then $X$ has at least one end.
\end{corollary}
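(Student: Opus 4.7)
The plan is to deduce this corollary directly from Corollary~\ref{cor:existenceOfEnds2} by making the simplest possible choice of compact set, namely $K = \varnothing$.

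First I would observe that the empty set is a compact subspace of $X$, and that $X \setminus \varnothing = X$. Since $X$ is connected (being nice for ends), the set $X$ is the unique connected component of $X \setminus \varnothing$. Since $X$ is noncompact, its closure in itself---which is just $X$---is not compact, so $X$ qualifies as an unbounded connected component of $X \setminus \varnothing$ in the sense of the definition of boundedness given before Lemma~\ref{lem:endsAreUnbounded}.

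Then I would invoke Corollary~\ref{cor:existenceOfEnds2} with $K = \varnothing$ and $V = X$ to produce an end $\epsilon \in \E{X}$ with $\epsilon(\varnothing) = X$. In particular $\E{X} \neq \varnothing$, which is the claim.

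There is no real obstacle here; the only thing to double-check is that the hypotheses of Corollary~\ref{cor:existenceOfEnds2} are literally met with $K = \varnothing$, and this is immediate from the conventions in force (empty set is compact, $X$ is connected, and noncompact means unbounded in itself). Together with Lemma~\ref{lem:compactSpacesHaveNoEnds}, this corollary establishes the converse, so a space nice for ends is compact if and only if it has no ends.
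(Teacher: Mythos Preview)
Your proof is correct and takes exactly the same approach as the paper: apply Corollary~\ref{cor:existenceOfEnds2} with $K = \varnothing$ and $V = X$. You have simply spelled out in more detail why the hypotheses of that corollary are met, which the paper leaves implicit.
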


\begin{proof}
Apply Corollary~\ref{cor:existenceOfEnds2} to $K = \varnothing$ and $V = X$.
\end{proof}

Compact exhaustions also provide an upper bound on the number of ends of a space.

\begin{lemma} \label{lem:upperBoundOnEnds}
	The number of ends of $X$ is at most the continuum.
\end{lemma}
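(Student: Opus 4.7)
The plan is to use the sequence description of ends provided by a compact exhaustion, together with the fact that at each stage of the exhaustion there are only finitely many choices to be made. Concretely, I would begin by fixing a compact exhaustion $(K_i)$ of $X$, which exists by the lemma just established. Every end $\epsilon \in \E{X}$ then corresponds bijectively to a sequence $(V_i)$ with $V_{i+1}\subset V_i$, where $V_i = \epsilon(K_i)$ is a connected component of $X\setminus K_i$.

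Next I would invoke Lemma~\ref{lem:endsAreUnbounded} to observe that each $V_i = \epsilon(K_i)$ is unbounded, and then Lemma~\ref{lem:unboundedComponents} to conclude that for every $i$ there are only finitely many possibilities for $V_i$. Let $\mathcal{U}_i$ denote the finite set of unbounded components of $X\setminus K_i$. Then the map $\epsilon \mapsto (V_i)$ embeds $\E{X}$ into the product $\prod_{i=1}^{\infty} \mathcal{U}_i$ (in fact into the subset of nested sequences, but we need no more than the ambient product for the cardinality bound).

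Finally, I would estimate the cardinality:
\[
\card{\E{X}} \;\le\; \card{\prod_{i=1}^{\infty}\mathcal{U}_i} \;\le\; \aleph_0^{\aleph_0} \;=\; 2^{\aleph_0},
\]
using that each $\mathcal{U}_i$ is finite, hence at most countable. This gives the desired upper bound of the continuum.

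There is no real obstacle here: the two ingredients (the sequence reformulation of ends and the finiteness of unbounded components at each stage) are already in hand, and the rest is a one-line cardinal arithmetic estimate. The only small subtlety worth flagging is that some $\mathcal{U}_i$ might be empty if $X$ is compact (in which case there are no ends at all and the bound is trivial) or of size one (in which case those indices contribute nothing to the product), so the bound $2^{\aleph_0}$ is safe but not always sharp.
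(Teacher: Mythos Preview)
Your proof is correct and is essentially identical to the paper's argument: fix a compact exhaustion, identify ends with nested sequences of (unbounded) components, invoke Lemma~\ref{lem:unboundedComponents} to get finitely many choices at each stage, and bound the resulting product by the continuum. The only difference is cosmetic---you explicitly cite Lemma~\ref{lem:endsAreUnbounded} and write out the cardinal arithmetic, whereas the paper leaves these implicit.
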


\begin{proof}
Fix a compact exhaustion $(K_i)$ of $X$.
There exists one end for each choice of sequence $(V_i)$ where $V_i$ is a connected component of $X \setminus K_i$ and $V_i \supset V_{i+1}$. By Lemma~\ref{lem:unboundedComponents}, there exist only finitely many unbounded connected components of $X \setminus K_i$. There are at most continuum many ways to pick one element from each of a countable number of finite sets.
\end{proof}

The set of ends of $X$ admits a natural topology.
The resulting \textit{space of ends} is essential for the classification of noncompact surfaces.

\begin{definition}
	Let $\epsilon$ be an end of $X$, and let $K \subset X$ be compact.
	Let $\enbd{\epsilon}{K}$ denote the set of all ends $\beta$ of $X$ such that $\epsilon(K) = \beta(K)$.
	We refer to $\enbd{\epsilon}{K}$ as a \defword{basic end-space neighborhood} of $\epsilon$.
\end{definition}

Equip $\E{X}$ with the topology generated by
\[\{\enbd{\epsilon}{K}\,:\,\epsilon \in \E{X},\,K\subset X\tn{ is compact}\}\]
Note that if $\enbd{\epsilon}{K}$ and $\enbd{\eta}{L}$ both contain an end $\alpha$,
then they both contain ${\enbd{\alpha}{K \cup L}}$.
Thus, this collection indeed forms a basis for a topology on $\E{X}$.

\begin{lemma} \label{lem:endSpaceProperties}
	The space of ends $\E{X}$ is Hausdorff, compact, separable, and totally disconnected.
\end{lemma}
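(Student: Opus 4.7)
The plan is to prove each of the four properties in turn, using the compact exhaustion description of ends together with the clopen nature of the basic neighborhoods $\enbd{\epsilon}{K}$.

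For the Hausdorff property, given distinct ends $\epsilon_1\neq\epsilon_2$, by the very definition of equality of functions on the collection of compact subspaces there exists a compact $K\subset X$ with $\epsilon_1(K)\neq\epsilon_2(K)$. Then $\enbd{\epsilon_1}{K}$ and $\enbd{\epsilon_2}{K}$ are disjoint basic open sets separating them, for any $\beta$ in the intersection would force $\epsilon_1(K)=\beta(K)=\epsilon_2(K)$. For total disconnectedness, I would observe that each basic set $\enbd{\epsilon}{K}$ is also closed: its complement is $\bigcup\enbd{\eta}{K}$ taken over all ends $\eta$ with $\eta(K)\neq\epsilon(K)$, which is a union of basic opens. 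Thus $\E{X}$ admits a basis of clopen sets, and being Hausdorff it is totally disconnected.

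For compactness, I would fix a compact exhaustion $(K_i)$ of $X$. By Lemma~\ref{lem:unboundedComponents}, each complement $X\setminus K_i$ has only finitely many unbounded connected components, forming a finite set $E_i$. Using the bijection $\E{X}\cong\mathcal{E}'(X)$ already established, I view $\E{X}$ as the subset of the product $\prod_i E_i$ cut out by the compatibility condition $V_{i+1}\subset V_i$. Equipping each $E_i$ with the discrete topology, the product $\prod_i E_i$ is compact Hausdorff by Tychonoff, and the compatibility condition defines a closed subset (each condition $V_{i+1}\subset V_i$ is a union of clopen cylinders). The topology on $\E{X}$ agrees with this subspace topology because any basic neighborhood $\enbd{\epsilon}{K}$ contains $\enbd{\epsilon}{K_i}$ whenever $K\subset K_i$, and $\enbd{\epsilon}{K_i}$ is exactly the cylinder determined by the $i$-th coordinate $\epsilon(K_i)$. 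Hence $\E{X}$ is compact.

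For separability, the preceding analysis shows that the countable collection $\{\enbd{\epsilon}{K_i}:\epsilon\in\E{X},\,i\in\mathbb{N}\}$ is a basis, and for each $i$ there are only finitely many distinct such sets (one per element of $E_i$). So $\E{X}$ is second countable, hence separable.

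The main obstacle is the compactness step: one must carefully verify that the topology generated by the basic neighborhoods $\enbd{\epsilon}{K}$ coincides with the inverse-limit (equivalently, subspace-of-product) topology coming from a chosen compact exhaustion, and that the compatibility relation carves out a closed subset. Once that identification is in place, Tychonoff immediately yields compactness and the remaining properties follow without further effort.
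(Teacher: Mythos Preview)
Your argument is correct. The treatments of Hausdorff, total disconnectedness, and separability are essentially identical to the paper's: in each case one observes that the sets $\enbd{\epsilon}{K_i}$ coming from a fixed compact exhaustion are clopen and form a countable basis, with only finitely many such sets for each fixed $i$.

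The genuine difference is in the compactness step. The paper argues directly by contradiction: assuming an open cover $\mathcal{C}$ with no finite subcover, it inductively constructs a nested sequence $W_1\supset W_2\supset\cdots$ of unbounded components of $X\setminus K_i$ such that each $\{\epsilon:\epsilon(K_i)=W_i\}$ fails to be finitely coverable, then observes that this sequence determines an end $\alpha$ lying in some element of $\mathcal{C}$, a contradiction. This is essentially a K\"onig's-lemma argument carried out by hand. Your approach instead realizes $\E{X}$ as a closed subset of the compact product $\prod_i E_i$ of finite discrete sets and invokes Tychonoff. Your identification of topologies is correct: the cylinders over coordinate $i$ are exactly the sets $\enbd{\epsilon}{K_i}$, and since the $K_i$ are nested these already form a basis for both topologies. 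The inverse-limit viewpoint is cleaner and makes the profinite nature of $\E{X}$ transparent, at the cost of importing Tychonoff (which for countable products of finite sets is elementary anyway). The paper's argument is more self-contained and stays closer to the geometry of the exhaustion, which fits its expository aim of keeping the end-theory prerequisites minimal.
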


\begin{proof}
Fix a compact exhaustion $(K_i)$ of $X$.
If $\epsilon$, $\eta$ are two distinct ends of $X$, then there exists an $n$ such that $\epsilon(K_n) \neq \eta(K_n)$. $\enbd{\epsilon}{K_n}$ and $\enbd{\eta}{K_n}$ are two disjoint open sets containing $\epsilon$ and $\eta$, respectively. Thus, $\E{X}$ is Hausdorff. Furthermore, the complement of $\enbd{\epsilon}{K_n}$ is the union of $\enbd{\rho}{K_n}$ for all ends $\rho$ for which $\rho(K_n) \neq \epsilon(K_n)$. Thus, the complement of $\enbd{\epsilon}{K_n}$ is open, so $\enbd{\epsilon}{K_n}$ is clopen. As any two ends are separated by clopen subsets, $\E{X}$ is totally disconnected.

For each fixed $n$, there are finitely many unbounded components of $X \setminus K_n$, and so finitely many distinct sets $\enbd{\epsilon}{K_n}$ for $\epsilon \in \E{X}$. Because sets of the form $\enbd{\epsilon}{K_n}$ form a basis for the topology of $\E{X}$, and there are a finite number of these sets for each $n$, $\E{X}$ is second-countable, hence separable.

Lastly, we prove that $\E{X}$ is compact. Suppose for contradiction that $\E{X}$ has an open cover $\mathcal{C}$ with no finite subcover.
Say that a subset of $\E{X}$ is not finitely coverable if no finite subset of $\mathcal{C}$ covers it.
Suppose that $\{\epsilon\,:\,\epsilon(K_n) = V\}$ is not finitely coverable for some $V\subset X$.
Then, let $W_1, W_2, \ldots, W_k$ be the unbounded connected components of $X \setminus K_{n+1}$ contained in $V$.
We can see that $\{\epsilon\,:\,\epsilon(K_{n})=V\}$ is the union of $\{\epsilon\,:\,\epsilon(K_{n+1})=W_i\}$ over all $W_i$.
As $\{\epsilon\,:\,\epsilon(K_{n})=V\}$ is not finitely coverable, there exists some $j$ such that $\{\epsilon\,:\,\epsilon(K_{n+1})=W_j\}$ is not finitely coverable.
As $\E{X}$ is not finitely coverable, we obtain a sequence $W_1, W_2, \ldots $
where $W_i$ is a connected component of $X \setminus K_i$, $\{\epsilon\,:\,\epsilon(K_{i})=W_i\}$ is not finitely coverable, and $W_i \supset W_{i+1}$.
This determines an end, $\alpha$, of $X$. The end $\alpha$ is in some open subset $U \in \mathcal{C}$, which contains a subset of the form $\enbd{\alpha}{K_n}$ for some $n$. But, then, $\enbd{\alpha}{K_n}$ would be finitely coverable, which we know is not the case. This is a contradiction, so $\E{X}$ is compact.
\end{proof}

As ends are defined using compact subspaces, the theory of ends naturally utilizes \textit{proper maps}.
Recall that a continuous function $f: X \to Y$ is \defword{proper} provided the inverse image of each compactum is compact.
Topological spaces and proper maps form a category that is well-suited to the study of ends.
Given a proper map $f: X \to Y$, we may extend $f$ to a (proper) map $\E{f}: \E{X} \rightarrow \E{Y}$.
In other words, $\E{-}$ is a functor.

\begin{definition} \label{def:eMap}
Let $\E{f}(\epsilon)$ denote the unique end $\eta \in \E{Y}$ such that $f^{-1}(\eta(K)) \supset \epsilon(f^{-1}(K))$
for all compact $K \subset Y$.
\end{definition}

It may be shown that $\E{f}$ is well-defined and continuous.
For example, let $\R_+ = [0,\infty)\subset\R$.
A \defword{ray} in $X$ is a proper embedding $r:\R_+\to X$. 
There is a simple description of $\E{r}$.
Let $\epsilon$ be the unique end of $\R_+$, and let $\eta = \E{r}(\epsilon)\in\E{X}$.
Then, $\eta(K)$ is the unique component of $X \setminus K$ which contains $r((a, \infty))$ for some $a \in \R_+$.
In this case, we say that $r$ \defword{points to} $\eta$.

Although arguments are cleaner for connected spaces,
the theory of ends applies to \textit{disconnected} topological spaces that are otherwise nice for ends.
This allows us to define the ends of any manifold, which we will use frequently when adding $1$-handles at infinity.

\begin{definition}
Let $X$ be Hausdorff, locally compact, locally connected, and $\sigma$-compact, but not necessarily connected.
An \defword{end} of $X$ is an end of a connected component of $X$.
\end{definition}

When $X$ is disconnected, we may still define a natural topology on $\E{X}$ as above.
As a topological space, $\E{X}$ is the disjoint union of $\E{C}$ over all connected components $C$ of $X$.
If $X$ has only finitely many connected components, then $\E{X}$ is still compact.
If $X$ has infinitely many components, then $\E{X}$ may be noncompact.
Our focus is the effect of adding a single 1-handle at infinity to a manifold $M$.
This operation involves either one or two components of $M$, and any other components may be safely ignored.

\section{End Sum and 1-handles at Infinity} \label{sec:endSumIntroduction}

The basic idea of the end sum operation is to combine two noncompact manifolds of the same dimension along a proper ray in each.
For example, the interior of a boundary sum of two manifolds with boundary is an end sum of their interiors.
In general, end sum is more complicated than boundary sum as it applies to noncompact manifolds
that are not necessarily the interior of any compact manifold, and it requires a choice of ray in each manifold.
End sum also requires a choice of a tubular neighborhood map for each ray, though different choices yield isomorphic manifolds.
End sum is a special case of the more general operation of attaching a $1$-handle at infinity to a possibly disconnected manifold
(see also~\cite[pp.~1303--1305]{cg}).
We define these operations simultaneously for \CAT=\TOP, \PL, and \DIFF.
For the remainder of this section, embeddings and homeomorphisms are assumed to be \CAT.

Let $M$ be a (possibly disconnected) $n$-manifold.
A \defword{ray} in $M$ is a proper, locally flat embedding $r: \R_+ \to M$ with image in the manifold interior of $M$.
We often conflate a ray with its image in $M$.
Applying the end functor $\mathcal{E}$ to $r$ (see Definition~\ref{def:eMap}),
we have that $\E{r}$ picks out a single end $\eta \in \E{M}$.
In this case, we say that $r$ \defword{points to} $\eta$.
Define a \defword{tubular neighborhood map} of $r$ to be a proper, locally flat embedding
$\nu: [0, \infty) \times D^{n-1} \to M$ with image in the manifold interior of $M$ and such that $\nu(x, 0) = r(x)$.

Let $r$ and $r'$ be disjoint rays in $M$,
and let $\nu$ and $\nu'$ be tubular neighborhood maps of $r$ and $r'$ (respectively) with disjoint images.
Let $U \subset M$ be the image of $\nu$ restricted to the manifold interior of its domain.
Similarly, define $U'$ for $\nu'$.
As $U$ and $U'$ are locally flat embeddings of $n$-manifolds without boundary into the manifold interior of $M$,
$U$ and $U'$ are open subsets of the manifold interior of $M$ (hence, also of $M$).

Let $E = \R \times \R^{n-1}$, which is the $1$-handle at infinity that will be attached to $M$.
As in Figure~\ref{fig:endsum_diagram}, we partition $E$ into $E_- = (-\infty, 0) \times \R^{n-1}$,
$E_0 = \{0\} \times \R^{n-1}$, and $E_+ = (0, \infty) \times \R^{n-1}$.
\begin{figure}
    \centerline{\includegraphics[scale=0.7]{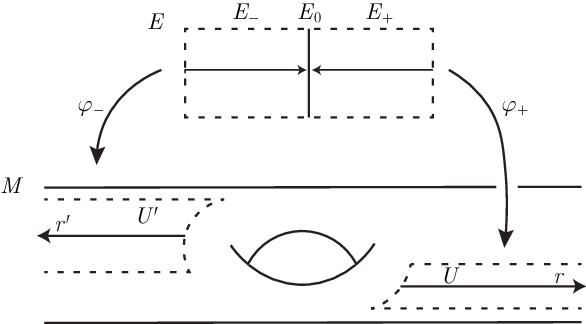}}
    \caption{Data for the addition of a 1-handle at infinity to $M$.}
	\label{fig:endsum_diagram}
\end{figure}
The restriction of $\nu$ to $U$ gives a homeomorphism $(0, \infty) \times \intr{(D^{n-1})} \cong U$
where $\intr{(D^{n-1})}$ denotes the manifold interior of the disk.
By reversing the first factor and radially expanding the second,
we get a homeomorphism $\phi_+: E_+ \rightarrow U$.
Similarly, the restriction of $\nu'$ to $U'$ gives a homeomorphism $(0, \infty) \times \intr{(D^{n-1})} \cong U'$.
By reversing \textit{and negating} the first factor and radially expanding the second, we get a homeomorphism $\phi_-: E_-  \rightarrow U'$.

We glue $E$ to $M$ using the maps $\phi_+$ and $\phi_-$.
Explicitly, let $\sim$ be the equivalence relation on $M \sqcup E$ generated
by $x \sim \phi_+(x)$ for $x \in E_+$ and $x \sim \phi_-(x)$ for $x \in E_-$.
Let $N$ be the quotient space of $M \sqcup E$ by this equivalence relation as in Figure~\ref{fig:endsum_final}.
\begin{figure}
    \centerline{\includegraphics[scale=0.65]{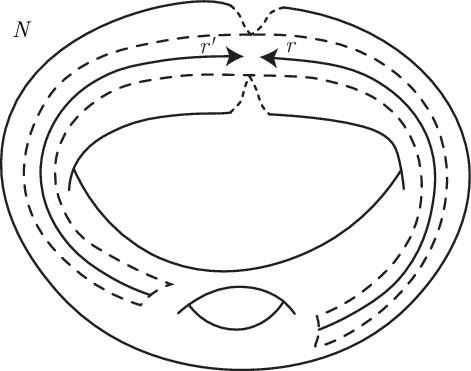}}
    \caption{The result $N$ of adding a 1-handle at infinity to $M$.}
	\label{fig:endsum_final}
\end{figure}
Let $q:M \sqcup E \to N$ be the associated quotient map.
We call $N$ a \defword{result of attaching a 1-handle at infinity} to $M$.
If $M=M_1 \sqcup M_2$ has exactly two components $M_1$ and $M_2$, each containing one of the rays $r$ or $r'$,
then we call $N$ an \defword{end sum} of $M_1$ and $M_2$ (or simply an end sum of $M$).
When $M$ is an oriented manifold and $\phi_+$ and $\phi_-$ are both orientation-preserving,
we call this construction an \defword{oriented $1$-handle}.

Below, we prove that $N$ is naturally an $n$-manifold
(we are not aware of a proof of this fact in the literature).
Define the composite maps $i_M:M \hookrightarrow M \sqcup E \stackrel{q}{\rightarrow} N$ and
$i_E:E \hookrightarrow M \sqcup E \stackrel{q}{\rightarrow} N$.
As the tubular neighborhood maps $\nu$ and $\nu'$ have disjoint images,
the maps $i_M$ and $i_E$ are injective.

\begin{lemma} \label{qopen}
	The quotient map $q:M \sqcup E \to N$ is open.
\end{lemma}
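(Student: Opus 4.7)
The plan is to use the standard criterion: a quotient map is open if and only if the saturation (under the equivalence relation) of every open set is open. So I would take an arbitrary open $V \subset M \sqcup E$, write it as $V = V_M \sqcup V_E$ with $V_M$ open in $M$ and $V_E$ open in $E$, and show that $q^{-1}(q(V))$ is open in $M \sqcup E$.

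First I would describe the equivalence classes of $\sim$ explicitly. Since the only nontrivial identifications come from $x \sim \phi_+(x)$ (for $x \in E_+$) and $x \sim \phi_-(x)$ (for $x \in E_-$), and since $\phi_+$ and $\phi_-$ are injections with disjoint images $U$ and $U'$ in $M$, every equivalence class is either a singleton or a two-point set of the form $\{x, \phi_+(x)\}$ with $x \in E_+$, or $\{x, \phi_-(x)\}$ with $x \in E_-$. Consequently the saturation of $V$ is
\[
q^{-1}(q(V)) = V \,\cup\, \phi_+(V_E \cap E_+) \,\cup\, \phi_-(V_E \cap E_-) \,\cup\, \phi_+^{-1}(V_M \cap U) \,\cup\, \phi_-^{-1}(V_M \cap U').
\]

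Next I would justify that each piece on the right is open in $M \sqcup E$. The sets $E_+$ and $E_-$ are open in $E$, so $V_E \cap E_+$ and $V_E \cap E_-$ are open in $E_\pm$; since $\phi_+ \colon E_+ \to U$ and $\phi_- \colon E_- \to U'$ are homeomorphisms onto $U, U' \subset M$, which are themselves open in $M$ (as observed just before the lemma), their images are open in $M$. Symmetrically, $V_M \cap U$ and $V_M \cap U'$ are open in $U$ and $U'$, and their preimages under the homeomorphisms $\phi_\pm$ are open in $E$. Thus $q^{-1}(q(V))$ is a union of open sets, hence open, so $q(V)$ is open in $N$ by definition of the quotient topology.

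There is not really a hard step here; the only thing to watch is that the images of the tubular neighborhood maps used to define $\phi_\pm$ are genuinely open in $M$, which has already been arranged in the setup (the rays are proper, locally flat, into the manifold interior of $M$, and $U, U'$ are the images of the open parts of the tubular neighborhoods). Once this is in hand, openness of $q$ is immediate from the fact that the identification is made along homeomorphisms between open subsets.
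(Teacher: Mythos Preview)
Your proof is correct and follows essentially the same approach as the paper: both verify openness of $q$ by computing the saturation of an arbitrary open set as a finite union involving $\phi_\pm$ and $\phi_\pm^{-1}$, then invoke that $\phi_+$ and $\phi_-$ are homeomorphisms between open subsets. Your version is just slightly more explicit in splitting $V = V_M \sqcup V_E$ and in checking that each term of the union is open.
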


\begin{proof}
The quotient map is open if and only if the saturation of any open set in $M \sqcup E$ with respect to $\sim$ is still open in $M \sqcup E$.
Let $V$ be open in $M \sqcup E$ and let $W=q^{-1}(q(V))$ be the saturation of $V$ with respect to $\sim$.
We have:
\[W = V \cup \phi_+(V \cap E_+) \cup \phi_+^{-1}(V \cap U) \cup \phi_-(V \cap E_-) \cup \phi_-^{-1}(V \cap U')\]
As $\phi_+$ and $\phi_-$ are homeomorphisms between open subspaces of $M \sqcup E$, $W$ is open.
\end{proof}

\begin{corollary} \label{compsopen}
	The maps $i_M:M\to N$ and $i_E:E\to N$ are open.
\end{corollary}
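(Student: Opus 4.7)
The plan is to observe that each of $i_M$ and $i_E$ is a composition of two open maps, hence open. By Lemma~\ref{qopen}, the quotient map $q:M\sqcup E\to N$ is open, so it remains only to verify that the canonical inclusions $M\hookrightarrow M\sqcup E$ and $E\hookrightarrow M\sqcup E$ are open.

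For the inclusions, I would invoke the definition of the disjoint union topology: a subset of $M\sqcup E$ is open if and only if its intersections with $M$ and $E$ (viewed as the two summands) are open in $M$ and $E$ respectively. Consequently, if $V\subset M$ is open, then its image under $M\hookrightarrow M\sqcup E$ is simply $V\subset M\sqcup E$, which is open in $M\sqcup E$ because $V\cap M=V$ is open in $M$ and $V\cap E=\varnothing$ is open in $E$. The same argument applies verbatim to $E\hookrightarrow M\sqcup E$.

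The composition of two open maps is open, since if $f:A\to B$ and $g:B\to C$ are open and $V\subset A$ is open, then $g(f(V))$ is open in $C$. Applying this to $i_M=q\circ (M\hookrightarrow M\sqcup E)$ and $i_E=q\circ (E\hookrightarrow M\sqcup E)$ immediately yields the corollary.

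There is no real obstacle here; the corollary is a direct consequence of Lemma~\ref{qopen} together with the elementary fact that summand inclusions into a disjoint union are open. The only thing worth being careful about is not confusing $i_M$ and $i_E$ with embeddings of manifolds (they are indeed injective by the preceding paragraph of the paper, but that fact is not needed here—only openness is).
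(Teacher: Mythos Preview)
Your proof is correct and follows exactly the same approach as the paper: the inclusions $M\hookrightarrow M\sqcup E$ and $E\hookrightarrow M\sqcup E$ are open by the definition of the disjoint union topology, and composing with the open map $q$ from Lemma~\ref{qopen} gives the result. The paper's proof is simply a two-sentence version of what you wrote.
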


\begin{proof}
The inclusions $M\hookrightarrow M \sqcup E$ and $E\hookrightarrow M \sqcup E$ are open by the definition of the disjoint union topology.
Now, apply Lemma~\ref{qopen}.
\end{proof}

The following is the key step where properness of the rays and tubular neighborhood maps are required.

\begin{lemma} \label{lem:endSumIsHausdorff}
	The quotient space $N$ is Hausdorff.
\end{lemma}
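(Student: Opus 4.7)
The plan is to apply the standard criterion that if $q : X \to N$ is an open surjection, then $N$ is Hausdorff if and only if the equivalence relation $R = \{(x,y) \in X \times X : q(x) = q(y)\}$ is closed in $X \times X$. Openness is already granted by Lemma~\ref{qopen}, so it suffices to show $R \subset (M \sqcup E)^2$ is closed. I would decompose
\[R \;=\; \Delta \,\cup\, \Gamma_+ \,\cup\, \Gamma_+^{-1} \,\cup\, \Gamma_- \,\cup\, \Gamma_-^{-1},\]
where $\Delta$ is the diagonal and $\Gamma_\pm = \{(x, \phi_\pm(x)) : x \in E_\pm\}$. The diagonal is closed because $M \sqcup E$ is Hausdorff, and the flip $(x,y) \mapsto (y,x)$ sends closed sets to closed sets, so the task reduces to showing $\Gamma_+$ (and symmetrically $\Gamma_-$) is closed in $(M \sqcup E)^2$.

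Since $M \sqcup E$ is first countable, I would argue sequentially. Suppose $(a_n, b_n) \in \Gamma_+$ with $(a_n, b_n) \to (a, b)$ in $(M \sqcup E)^2$. Because $E$ and $M$ are each clopen in the disjoint union, the limit $a$ lies in the closure of $E_+$ inside $E$, which is $E_+ \cup E_0$, while $b \in M$. If $a \in E_+$, continuity of $\phi_+$ gives $b = \phi_+(a)$, placing $(a,b)$ in $\Gamma_+$. The delicate case --- and the one that makes the properness hypothesis in the definition of a tubular neighborhood map essential --- is $a \in E_0 = \{0\} \times \R^{n-1}$, where one must rule out identifications ``leaking'' onto the core of the $1$-handle.

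Here properness enters. Writing $a_n = (t_n, v_n)$ with $t_n \to 0^+$ and $v_n \to v$, the definition of $\phi_+$ (reverse the first factor, radially expand the second) identifies $a_n$ with a point $(s_n, w_n)$ in the domain of $\nu$ for which $s_n \to \infty$. Hence $(s_n, w_n)$ leaves every compact subset of $[0, \infty) \times D^{n-1}$, and properness of $\nu$ forces $b_n = \nu(s_n, w_n)$ to leave every compact subset of $M$. But $M$ is locally compact, so $b \in M$ admits a compact neighborhood that must contain all but finitely many $b_n$ --- a contradiction. Thus the $a \in E_0$ case cannot occur; $\Gamma_+$ is closed, and the identical argument using $\nu'$ handles $\Gamma_-$. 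Once $R$ is closed, Hausdorffness of $N$ follows in the usual way: given $[p] \neq [q]$ in $N$, a basic open box $V \times V'$ in $(M \sqcup E)^2$ containing $(p,q)$ and disjoint from $R$ yields disjoint open neighborhoods $q(V)$ and $q(V')$ of $[p]$ and $[q]$ by Lemma~\ref{qopen}. The main obstacle is exactly the $E_0$ step above; everything else is formal.
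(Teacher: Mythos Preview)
Your argument is correct, and it reaches the same destination as the paper but by a different road. The paper argues directly: given distinct $[a],[b]\in N$, it splits into three cases according to whether the preimages lie in $\image i_M$ or $\image i_E$, and in the only nontrivial case (one point in $i_M(M)$, the other in $i_E(E_0)$) it chooses a bounded open neighborhood of the $M$-point and invokes properness of $\nu,\nu'$ to carve out a slab $i_E((-t,t)\times\R^{n-1})$ disjoint from it. You instead appeal to the open-quotient criterion and show the relation $R$ is closed by decomposing it as a finite union $\Delta\cup\Gamma_\pm\cup\Gamma_\pm^{-1}$; the only piece requiring work is ruling out limit points of $\Gamma_+$ (resp.\ $\Gamma_-$) on $E_0\times M$, which you dispatch with exactly the same properness input the paper uses. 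What your route buys is structure: it makes transparent that Hausdorffness reduces to the graphs of the gluing homeomorphisms being closed in the ambient product, and it isolates the single genuine obstruction at $E_0$. The paper's route is slightly more elementary in that it avoids the general open-surjection criterion and the sequential/first-countability step (harmless here since manifolds are locally Euclidean, hence first countable), and it never needs to verify explicitly that the equivalence relation generated by $\phi_\pm$ has no ``accidental'' transitive identifications---a point you implicitly use and which holds because $U$ and $U'$ are disjoint.
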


\begin{proof}
Let $a$ and $b$ be distinct points in $N$.
If $a, b \in \image\,i_M$, then $i_M^{-1}(a)$ and $i_M^{-1}(b)$ are separated by disjoint open sets $A$ and $B$ in $M$.
The sets $i_M(A)$ and $i_M(B)$ are disjoint since $i_M$ is injective, open in $N$ by Corollary~\ref{compsopen}, and thus separate $a$ and $b$.
If $a, b \in \image\,i_E$, then similarly $i_E^{-1}(a)$ and $i_E^{-1}(b)$ are separated by disjoint open sets in $E$
whose images under $i_E$ separate $a$ and $b$.
Lastly, consider the case where $a \in \image\,i_M$ and $b \in \image\,i_E$.
We may assume $b \in i_E(E_0)$, as otherwise $b \in \image\,i_M$.
Let $A$ be a bounded open subset of $M$ containing $a$.
As the tubular neighborhood maps $\nu$ and $\nu'$ are proper,
there exists some $s>0$ such that $\nu((s, \infty) \times D^{n-1})$ and $\nu'((s, \infty) \times D^{n-1})$ are disjoint from $A$.
So, there exists some $t>0$ such that $i_E((-t, t) \times \R^{n-1})$ is disjoint from $i_M(A)$.
The sets $i_E((-t, t) \times \R^{n-1})$ and $i_M(A)$ are open by Corollary~\ref{compsopen} and separate $a$ and $b$.
\end{proof}

\begin{lemma} \label{lem:endSumIsManifold}
	The quotient space $N$ is naturally an $n$-dimensional CAT manifold.
	If $M$ is oriented and the $1$-handle is oriented, then $N$ is naturally an oriented manifold.
	The maps $i_M$ and $i_E$ are open embeddings, and if the $1$-handle is oriented, then $i_M$ and $i_E$ are orientation-preserving.
\end{lemma}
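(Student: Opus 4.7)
The plan is to leverage the fact that $N$ is covered by the two open images $i_M(M)$ and $i_E(E)$, on which we can transfer the CAT structures from $M$ and $E$ and check they agree on overlaps. First I would establish that $i_M$ and $i_E$ are open embeddings. Continuity is automatic (each is a composition $q \circ (\tn{inclusion})$). Injectivity follows because the generating pairs of $\sim$ only identify points of $E_+ \sqcup E_-$ with points of $U \sqcup U'$; no two points of $M$ are identified and no two points of $E$ are identified. Combined with Corollary~\ref{compsopen} (openness), this yields that each of $i_M$ and $i_E$ is a homeomorphism onto its open image.

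Next I would transport the CAT atlases. Since $q$ is surjective and only $E_\pm$ are identified with $U, U'$, one has $N = i_M(M) \cup i_E(E)$. A CAT atlas on $N$ is obtained by pushing forward the atlas on $M$ via $i_M$ and the standard atlas on $E = \R \times \R^{n-1}$ via $i_E$. Compatibility must be checked on the overlap
\[
i_M(M) \cap i_E(E) \;=\; i_M(U \cup U') \;=\; i_E(E_+ \cup E_-).
\]
On the $E_+$-part the transition map $(i_M)^{-1} \circ (i_E|_{E_+}) : E_+ \to U$ is by construction $\phi_+$, which is a CAT homeomorphism between open subsets; analogously on $E_-$ the transition map is $\phi_-$. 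Hence the atlas is CAT-coherent. Since every chart has codomain either an open subset of $M$ (locally modeled on $\R^n$ or $\R^n_+$) or an open subset of $E \cong \R^n$, the local model at each point is $n$-dimensional, and the manifold boundary of $N$ is exactly $i_M(\bd M)$ (the rays lie in the interior of $M$ and $E$ is boundaryless).

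Hausdorffness is Lemma~\ref{lem:endSumIsHausdorff}; paracompactness is the one routine point that needs attention. The components of $N$ that meet $i_E(E)$ are images under $q$ of $E$ together with (at most two) components of $M$, each of which is paracompact and $\sigma$-compact as a CAT manifold; their quotient is therefore $\sigma$-compact Hausdorff and locally Euclidean, hence paracompact. The remaining components of $N$ are CAT-isomorphic, via $i_M$, to components of $M$, hence paracompact. So $N$ is a CAT $n$-manifold.

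Finally, for orientations, suppose $M$ is oriented and the $1$-handle is oriented, i.e.\ both $\phi_+$ and $\phi_-$ are orientation-preserving with respect to the standard orientation on $E$. Orient the chart $i_M : M \to N$ by the given orientation on $M$ and the chart $i_E : E \to N$ by the standard orientation on $E$. The transition functions $\phi_\pm$ on the overlap are then orientation-preserving by hypothesis, so these local orientations assemble into a single orientation on $N$ with respect to which $i_M$ and $i_E$ are orientation-preserving by construction. The main technical point I expect to have to state carefully is the boundary/interior bookkeeping (so that only $i_M(\bd M)$ contributes to $\bd N$) and the quick paracompactness check for the component of $N$ containing the $1$-handle; everything else is a direct unpacking of the gluing data.
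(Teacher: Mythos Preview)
Your proposal is correct and follows essentially the same approach as the paper: both argue that $i_M$ and $i_E$ are open embeddings via injectivity plus Corollary~\ref{compsopen}, cover $N$ by $i_M(M)\cup i_E(E)$ to get the local Euclidean and CAT structures (with transition maps $\phi_\pm$), invoke Lemma~\ref{lem:endSumIsHausdorff} for Hausdorffness, and assemble the orientation from the oriented charts when $\phi_\pm$ are orientation-preserving. The only minor difference is that the paper obtains paracompactness in one line by noting that $N$ is an open quotient of the second-countable space $M\sqcup E$, whereas you argue component-by-component via $\sigma$-compactness; both are routine.
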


\begin{proof}
The quotient space $N$ is Hausdorff by Lemma~\ref{lem:endSumIsHausdorff}.
As $M \sqcup E$ is second-countable and the open quotient of a second-countable space is second-countable, $N$ is also separable.
Every point of $N$ has a neighborhood homeomorphic to a subset of $M$ or $E$, so in particular, $N$ is locally Euclidean.
Thus, $N$ is a \TOP\ manifold.
If \CAT=\PL\ or \DIFF, then $N$ inherits a natural \PL\ or \DIFF\ structure using an atlas generated by the charts of $M$ and $E$.

If in addition $M$ is an oriented manifold and the 1-handle is oriented,
then the atlas generated by the oriented charts of $M$ and $E$ defines an orientation of $N$.

Lastly, $i_M$ and $i_E$ are injective, open by Corollary~\ref{compsopen}, and thus are open embeddings.
If in addition $M$ is an oriented manifold and the 1-handle is oriented, then $i_M$ and $i_E$ respect the orientation of $N$.
\end{proof}

There is an equivalent operation to adding a $1$-handle at infinity that is sometimes useful. 
In this alternative formulation, disjoint closed regular neighborhoods $A$ and $A'$ of $r$ and $r'$ are chosen in the interior of $M$.
Then, $N$ is defined by removing the manifold interiors of $A$ and $A'$ from $M$
and then gluing together the resulting boundary components---both are copies of $\R^{n-1}$---by an orientation reversing homeomorphism.
For more on this approach, see \cite[pp.~1813--1818]{cks}.

\section{Classification of Surfaces with Compact Boundary}\label{sec:classification}

The classification of noncompact surfaces with compact boundary is essential for our end sum uniqueness results.
This contrasts with end sum uniqueness results in higher dimensions which rely on isotopy uniqueness of rays~\cite{cg}.
Manifolds in this section are assumed to be \PL\, although the results hold for \DIFF\, and \TOP.
Following Richards~\cite{richards}, we define the genus and parity of any surface with compact boundary and extend those concepts to end invariants.
Using those invariants, we present a classification theorem of surfaces with compact boundary.

\subsection{Compact Surfaces}
\label{subsec:compactsurfaces}

We begin with the well-known classification of compact surfaces. For a proof, see \cite[pp.~446--476]{munkrestop}. Given a compact surface $M$, let $\chi(M)$ denote its Euler characteristic, and let $b(M)$ denote the number of boundary components of $M$.

\begin{theorem}[Classification of compact surfaces]\label{thm:finite2Manifolds}
	Let $M$ be a compact, connected surface. If $M$ is orientable, then for some non-negative integer $g$, $M$ is isomorphic to a sphere with $g$ handles and $b(M)$ holes, and $\chi(M) = 2 - 2g - b(M)$. If $M$ is non-orientable, then for some positive integer $k$, $M$ is isomorphic to the sphere with $k$ cross-caps and $b(M)$ holes, and $\chi(M) = 2 - k - b(M)$.
\end{theorem}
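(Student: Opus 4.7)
The plan is to follow the classical combinatorial approach, essentially the one in the Munkres reference cited. Since $M$ is $\PL$, fix a triangulation. If $\bd M$ is nonempty with $b(M)$ components, I would first cap off each boundary circle by gluing a closed $2$-disk along it to form a closed triangulated surface $\widehat M$ satisfying $\chi(\widehat M) = \chi(M) + b(M)$. It then suffices to classify closed connected surfaces and reopen the $b(M)$ disks at the end.

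The next step is to realize $\widehat M$ as an identification polygon. Choose a maximal tree $T$ in the dual $1$-skeleton of the triangulation of $\widehat M$; cutting $\widehat M$ along all $1$-simplices not crossed by $T$ unfolds $\widehat M$ into a polygonal disk $P$ whose boundary is labeled by a word $w$ in which every letter appears exactly twice (once for each side of the identification). The heart of the argument is the normal-form reduction: by finitely many cut-and-paste moves (cyclic rotation of $w$, cutting along a diagonal and regluing along an identified edge pair, and cancellation of adjacent inverse pairs $xx^{-1}$) one shows that $w$ can be transformed either to $a_1b_1a_1^{-1}b_1^{-1}\cdots a_gb_ga_g^{-1}b_g^{-1}$ for some $g \geq 0$ (yielding a sphere with $g$ handles) or to $a_1a_1\cdots a_ka_k$ for some $k \geq 1$ (yielding a sphere with $k$ crosscaps). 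A key intermediate identity is $T^2 \# \mathbb{R}P^2 \cong (\mathbb{R}P^2)^{\#3}$, which ensures that once a single crosscap appears, every handle can be converted into two additional crosscaps, preventing the two normal forms from coexisting.

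Re-attaching the removed open disks, one obtains the asserted models. The Euler characteristic formulas are then a direct computation: each handle decreases $\chi$ by $2$, each crosscap by $1$, and each removed open disk by $1$, all starting from $\chi(S^2) = 2$. Orientability is preserved by the reduction moves and is incompatible with the presence of any crosscap, so the two families are disjoint; the integers $g$, $k$, and $b(M)$ are then uniquely determined by the homeomorphism type of $M$ (uniqueness of $g$ or $k$ follows from the $\chi$-formula together with orientability, and uniqueness of $b(M)$ is immediate from the definition).

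The main obstacle is the normal-form reduction itself: one must verify carefully that the cut-and-paste moves terminate on an arbitrary edge-pairing word and that they genuinely separate the orientable and non-orientable cases. This combinatorial bookkeeping---especially the handling of the case when $w$ contains a mixture of ``orientable'' pairs $\ldots x \ldots x^{-1} \ldots$ and ``non-orientable'' pairs $\ldots x \ldots x \ldots$, where the identity $T^2 \# \mathbb{R}P^2 \cong (\mathbb{R}P^2)^{\#3}$ is invoked---is where the bulk of the work lies and is presented in full in the Munkres reference.
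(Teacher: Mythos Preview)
Your outline is correct and matches the paper's treatment: the paper does not give its own proof of this classical result but simply cites Munkres~\cite[pp.~446--476]{munkrestop}, and the combinatorial normal-form argument you sketch is precisely the one presented there.
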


It will be more convenient for us to classify surfaces by their genera. We use the following convention (sometimes called the ``generalized genus'') to extend the concept of genus to any compact surface. Given a compact surface $M$, let $\piz{M}$ denote the number of connected components of $M$.

\begin{definition} \label{def:finitegenus}
	Let $M$ be a compact surface. The \defword{genus} of $M$, denoted $g(M)$, is an integer or half-integer defined by the following.
	
	\[ g(M) := \piz{M} - \frac{b(M) + \chi(M)}{2} \]
\end{definition}

For example, consider the surfaces described in Theorem~\ref{thm:finite2Manifolds};
in the orientable case $g(M)=g$, and in the non-orientable case $g(M)=k/2$.
Observe that the genus is unchanged if a surface is punctured (meaning an open disk is removed).
Further, genus is additive over disjoint union since it is a linear combination of functions that are each additive over disjoint union. These properties make the genus a user-friendly alternative to the Euler characteristic.
In a sense, the genus measures the complexity of a surface. This idea is made concrete by the following.

\begin{theorem} \label{thm:finiteGenusIsSize}
If $N$ is a compact surface, then $g(N) \geq 0$. If $M$ is a subsurface of $N$, then $g(M) \leq g(N)$.
\end{theorem}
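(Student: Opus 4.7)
The plan is to prove both statements using Theorem~\ref{thm:finite2Manifolds} and an Euler-characteristic computation after cutting $N$ along $\bd{M}$.

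For nonnegativity, Theorem~\ref{thm:finite2Manifolds} directly gives $g \geq 0$ for any connected compact surface (an orientable genus-$g$ surface has genus $g\in\mathbb{Z}_{\geq 0}$, and a non-orientable surface with $k$ cross-caps has genus $k/2\geq 1/2$; equality holds precisely in the planar case). Since $\piz{\cdot}$, $b(\cdot)$, and $\chi(\cdot)$ are each additive over disjoint union, so is $g$, and the result follows by summing over components.

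For monotonicity, I would set $M' := \closr{N \setminus M}$, the complementary subsurface. After a collar push-off of $M$ away from $\bd{N}$---which alters neither $g(M)$ nor $g(N)$---I may assume $\bd{M} \subset \intr{N}$, so that $M \cap M'$ is a disjoint union of $C \geq 0$ circles, each a common boundary component of $M$ and $M'$. Standard bookkeeping then yields $\chi(N) = \chi(M) + \chi(M')$ (circles have Euler characteristic zero), $b(M) = C$, and $b(M') = b(N) + C$. Substituting into Definition~\ref{def:finitegenus} and simplifying gives
\[
g(N) - g(M) = g(M') + \bigl(\piz{N} - \piz{M} - \piz{M'} + C\bigr).
\]
The first summand on the right is nonnegative by the first part of the theorem applied to the compact surface $M'$. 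The second is nonnegative because $N$ is obtained from $M \sqcup M'$ by gluing $C$ circle-pairs, and each such gluing reduces the component count by at most one, so $\piz{N} \geq \piz{M} + \piz{M'} - C$. Hence $g(M) \leq g(N)$.

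The main obstacle is justifying the collar push-off step when $\bd{M} \cap \bd{N} \ne \varnothing$: one must invoke a PL bicollar of $\bd{N}$ in $N$, verify that the pushed subsurface is PL-isomorphic to $M$ (so its genus is unchanged), and handle the degenerate case in which whole boundary components of $N$ lie in $\bd{M}$. After this reduction, the remaining steps are routine additivity identities for $\chi$, $b$, and $\piz{\cdot}$, together with the basic component-merging inequality for graph-like gluings.
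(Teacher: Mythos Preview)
Your proposal is correct and follows essentially the same route as the paper: reduce to the case $\bd M \subset \intr{N}$, then compare $g(N)$ with $g(M)+g(\closr{N\setminus M})$ via the additivity of $\chi$, the boundary-component count, and the component-merging inequality $\piz{N}\geq\piz{M}+\piz{M'}-C$. The only notable difference is in the reduction step: rather than pushing $M$ off $\bd N$ via a bicollar (and wrestling with the degenerate case you flag), the paper glues an external collar $\bd N\times[0,1]$ onto $N$ to obtain $N'\cong N$ with $M$ automatically in its manifold interior---a slightly cleaner maneuver that avoids your obstacle entirely.
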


\begin{proof}
The classification of compact surfaces (Theorem~\ref{thm:finite2Manifolds}) implies that every compact, connected surface has non-negative genus.
As genus is additive over disjoint union, every compact surface has non-negative genus.
	
Now consider a compact surface $N$ with a compact subsurface $M$.
It will be convenient for $M$ to avoid the boundary of $N$.
So, we let $N'$ be the result of gluing an external collar $\bd{N} \times [0, 1]$ to $N$ along $\bd{N}$. Note that $N$ and $N'$ are isomorphic and that $M$ lies in the manifold interior of $N'$.
	
As $M$ is a subsurface of $N'$ and is disjoint from $\bd{N}$, we have $E = \closr{N' \setminus M}$ is a subsurface of $N'$.
Note that $N'$ is obtained by gluing $E$ and $M$ together along their boundaries.
More precisely, each boundary component of $M$ is glued to a unique boundary component of $E$. We have
\begin{align*}
	   b(N') &= b(E) - b(M)\\
	   \chi(N') &= \chi(M) + \chi(E) - \chi(M \cap E) = \chi(M) + \chi(E)
\end{align*}
Beginning with $\piz{M} + \piz{E}$ components of $M$ and $E$, observe that each time a boundary component of $M$ is glued to one of $E$, the total number of components is reduced by $0$ or $1$. Thus, we have
	\begin{align*}
	    \piz{N'} &\geq \piz{M} + \piz{E} - b(M)
	\end{align*}
Therefore
	\begin{align*}
	    g(N') &= \piz{N'} - \frac{b(N') + \chi(N')}{2}\\
	    &\geq g(M) + g(E)
	\end{align*}
As $g(N') = g(N)$ and $g(E) \geq 0$, we get $g(N) \geq g(M)$.
\end{proof}

Define the \defword{parity} of a compact surface $M$, denoted $\parity{M}$, to be $2g(M)$ (mod 2). We call a compact surface \defword{even} if its parity is zero and \defword{odd} otherwise. Parity is strongly related to orientability by the following lemma.

\begin{lemma} \label{lem:parityIsOrientability}
If $N$ is a compact, orientable surface, then $\parity{N} = 0$.
If $N$ is a compact surface and $M \subset N$ is a subsurface such that $\bd{N} \subset M$ and $N \setminus M$ is orientable, then $\parity{N} = \parity{M}$ 
\end{lemma}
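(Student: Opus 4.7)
My plan is to handle the two assertions in turn. For the first, I would apply Theorem~\ref{thm:finite2Manifolds} componentwise: each connected component $N_i$ of an orientable compact surface $N$ is a sphere with $g_i$ handles and $b(N_i)$ holes for some integer $g_i \geq 0$, so $g(N_i) = g_i \in \mathbb{Z}$. Since genus is additive over disjoint union, $g(N) = \sum_i g(N_i) \in \mathbb{Z}$, whence $2g(N)$ is even and $\parity{N} = 0$.

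For the second, I would set $E = \closr{N \setminus M}$ and establish that (i) $E$ is a compact orientable subsurface of $N$; (ii) $M \cup E = N$ and $M \cap E$ is a disjoint union of circles; and (iii) the components of $\bd M$ split into the $b(N)$ components lying in $\bd N$ (using $\bd N \subset M$) together with $b(E)$ further components lying in $\intr N$, each of which coincides with a unique component of $\bd E$. The only subtle point is orientability of $E$: since orientability is a local condition, any orientation of the open manifold $N \setminus M$ extends uniquely to its closure, so $E$ inherits an orientation, and the first part of the lemma then gives $g(E) \in \mathbb{Z}$.

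The conclusion then follows from a short calculation. Inclusion-exclusion together with $\chi(M \cap E) = 0$ yields $\chi(N) = \chi(M) + \chi(E)$, while the boundary bookkeeping from (iii) gives $b(N) = b(M) - b(E)$. Substituting into the definition of genus,
\[
2g(N) - 2g(M) \;=\; 2\pa{\piz{N} - \piz{M}} + b(E) - \chi(E).
\]
The first summand is even; the second is even because $2g(E) = 2\piz{E} - b(E) - \chi(E)$ lies in $2\mathbb{Z}$, forcing $b(E) + \chi(E)$, and hence $b(E) - \chi(E)$, to be even. Therefore $2g(N) \equiv 2g(M) \pmod{2}$, i.e.\ $\parity{N} = \parity{M}$.

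The main obstacle will be the subsurface bookkeeping (i)--(iii): verifying that $E = \closr{N \setminus M}$ really is a codimension-zero submanifold, that $\bd E$ bijects with the ``internal'' components of $\bd M$, and that the orientation extends to all of $E$. This parallels the gluing analysis already carried out in the proof of Theorem~\ref{thm:finiteGenusIsSize}, so it should be routine, but it is where the hypotheses $\bd N \subset M$ and ``$N \setminus M$ orientable'' are consumed; everything beyond that is parity arithmetic.
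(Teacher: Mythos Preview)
Your proposal is correct and follows essentially the same approach as the paper: both arguments set $E = \closr{N \setminus M}$, use $\bd N \subset M$ to see that $E$ is a compact subsurface, use orientability of $N \setminus M$ to conclude $g(E) \in \mathbb{Z}$ via the first part, derive $b(N) = b(M) - b(E)$ and $\chi(N) = \chi(M) + \chi(E)$, and then do a short genus computation to show $g(N) - g(M) \in \mathbb{Z}$. The only difference is cosmetic---the paper rewrites $b(E)/2 - \chi(E)/2$ as $b(E) - \piz{E} + g(E)$ to display the integrality directly, whereas you argue that $b(E) - \chi(E)$ is even from $2g(E) \in 2\mathbb{Z}$---but the logic is identical.
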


\begin{proof}
By the classification of compact surfaces (Theorem~\ref{thm:finite2Manifolds}), it can be checked that every compact, connected, orientable surface has even parity. As the parity of a disconnected surface is the sum of the parities of its connected components, every compact, orientable surface has parity zero.

Now, consider a compact surface $N$ with compact subsurface $M$ such that $\bd{N} \subset M$ and $N \setminus M$ is orientable. Let $E = \closr{N \setminus M}$. Because $\bd{N} \subset M$, $E$ is actually a (compact) subsurface of $N$. As $N \setminus M$ is orientable, $E$ is orientable. Thus, $E$ has even parity.

We have
\begin{align*}
	   b(N) &= b(M) - b(E)\\
	   \chi(N) &= \chi(M) + \chi(E) - \chi(M \cap E) = \chi(M) + \chi(E)
\end{align*}
Thus
\begin{align*}
	    g(N) - g(M) &= \left(\piz{N} - \frac{b(N) + \chi(N)}{2}\right) - \left(\piz{M} - \frac{b(M) + \chi(M)}{2}\right)\\
	    &= \piz{N} - \piz{M} + \frac{b(E)}{2} - \frac{\chi(E)}{2}\\
	    &= \piz{N} - \piz{M} - \piz{E} + b(E) + g(E)
\end{align*}
As $g(E)$ is an integer, $g(N) - g(M)$ is an integer. Hence, $\parity{N} = \parity{M}$.
\end{proof}

\subsection{Noncompact Surfaces with Compact Boundary}

To extend our definitions from compact surfaces to noncompact surfaces, we will require the existence of arbitrarily large compact subsurfaces. The key theorem we need for these arguments is the following.

\begin{theorem} \label{thm:subsurfaces}
Let $M$ be a surface and let $K \subset M$ be compact. Then there exists a compact, locally flatly embedded subsurface $S \subset M$ such that $K \subset S$.
\end{theorem}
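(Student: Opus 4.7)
The plan is to use the PL structure on $M$ together with standard PL regular neighborhood theory. Since $M$ is a PL surface, fix a PL triangulation $T$ of $M$ in which $\partial M$ appears as a subcomplex (this is a standard fact in the PL category). Because $K$ is compact and $T$ is locally finite, only finitely many simplices of $T$ meet $K$. Let $L$ be the subcomplex of $T$ consisting of all closed simplices that meet $K$, together with their faces. Then $L$ is a finite, and hence compact, subpolyhedron of $M$, and $K \subset L$. In general $L$ will not itself be a $2$-manifold (a vertex or an edge of $L$ may fail to have a full disk neighborhood in $L$), so we cannot take $S = L$.

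To fix this, take $S$ to be a closed regular neighborhood of $L$ in $M$. Concretely, one may let $T''$ denote the second barycentric subdivision of $T$ and set $S = N(L; T'')$, the union of all closed simplices of $T''$ meeting $L$. By the PL regular neighborhood theorem (Rourke--Sanderson), $S$ is a compact PL $2$-submanifold of $M$ with $L \subset S$, and the frontier of $S$ in $M$ is bicollared in $M$. A bicollared frontier is exactly what is needed for local flatness, so $S$ is a compact, locally flatly embedded PL subsurface of $M$ containing $K$.

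The one nontrivial point is keeping track of boundary behavior when $K$ meets $\partial M$. This is handled by the initial choice to triangulate so that $\partial M$ is a subcomplex of $T$: then $S$ inherits a subcomplex structure in which $S \cap \partial M$ is a compact $1$-submanifold of $\partial M$ that meets the (bicollared) frontier of $S$ in $M$ transversely in finitely many points, and these two pieces together constitute $\partial S$. Apart from invoking the regular neighborhood theorem, this boundary bookkeeping is the only subtlety, and I expect it to be the main obstacle in writing out a fully rigorous argument, since one must verify local flatness at the points of $\partial S$ lying on $\partial M$ rather than only at interior frontier points.
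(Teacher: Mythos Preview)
Your proposal is correct and follows essentially the same approach as the paper: triangulate $M$, take the finite subcomplex $L$ of simplices meeting $K$, and then thicken $L$ to a simplicial (regular) neighborhood in a barycentric subdivision to obtain a compact PL subsurface $S \supset K$. The paper's proof is terser---it uses a single barycentric subdivision and simply asserts that the resulting simplicial neighborhood is a PL subsurface---whereas you invoke the regular neighborhood theorem explicitly via the second barycentric subdivision and spell out the bicollar/local-flatness and boundary considerations, but the underlying idea is identical.
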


\begin{proof}
Fix a triangulation $\Sigma$ of $M$. Let $L$ be the union of all triangles in $\Sigma$ that intersect $K$. Let $\Sigma'$ be the barycentric subdivision of $\Sigma$ and let $S$ be the union of all triangles in $\Sigma'$ that intersect $L$. It can be checked that $S$ is a \PL\ subsurface of $M$.
\end{proof}

As a consequence of Theorem~\ref{thm:finiteGenusIsSize}, we see that for a \textit{compact} surface $N$, $g(N)$ is the supremum of $g(M)$ over all compact subsurfaces $M$. This suggests a way to define the genus for arbitrary (noncompact) surfaces in terms of their compact subsurfaces. This generalized genus will play a vital role in our classification of noncompact surfaces with compact boundary.

\begin{definition} \label{def:genus}
	Let $N$ be a surface with compact boundary. The \defword{genus} of $N$, denoted $g(N)$, is the supremum of $g(M)$ over all
	compact subsurfaces $M \subset N$ (and may be infinite).
\end{definition}

Definition~\ref{def:genus} is not circular because the genus of an arbitrary surface is defined in terms of the genus of compact surfaces, which we have already defined. By Theorem~\ref{thm:subsurfaces}, we know that there exist arbitrarily large compact subsurfaces of $N$, so $g(N)$ is actually the \textit{limit} of $g(M)$ for compact subsurfaces $M$ as $M$ gets arbitrarily large.
Similar to the compact case, we may think of the genus of a noncompact surface as a measure of complexity
by the following extension of Theorem~\ref{thm:finiteGenusIsSize}.

\begin{theorem} \label{thm:genusissize}
If $N$ is a surface, then $g(N) \geq 0$. If $M$ is a subsurface of $N$, then $g(M) \leq g(N)$.
\end{theorem}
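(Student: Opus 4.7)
The plan is to reduce both assertions directly to the already-proved compact case, Theorem~\ref{thm:finiteGenusIsSize}, together with the existence of arbitrarily large compact subsurfaces guaranteed by Theorem~\ref{thm:subsurfaces}. The only definitional wrinkle is that Definition~\ref{def:genus} is phrased in terms of a supremum, while for compact surfaces $g$ was already defined combinatorially in Definition~\ref{def:finitegenus}; the first thing I would verify (in passing) is that these two definitions agree when $N$ is compact, since $N$ is itself one of the compact subsurfaces entering the supremum and by Theorem~\ref{thm:finiteGenusIsSize} no other compact subsurface of $N$ has strictly larger genus.

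For the first claim, $g(N)\geq 0$: if $N$ is compact, this is Theorem~\ref{thm:finiteGenusIsSize}. If $N$ is noncompact, apply Theorem~\ref{thm:subsurfaces} to any single point $K=\{p\}\subset N$ to obtain at least one compact subsurface $S\subset N$. Theorem~\ref{thm:finiteGenusIsSize} gives $g(S)\geq 0$, so the supremum in Definition~\ref{def:genus} is taken over a nonempty set of nonnegative values, hence $g(N)\geq 0$.

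For the second claim, $g(M)\leq g(N)$: the observation is that every compact subsurface $K\subset M$ is automatically a compact subsurface of $N$. Thus for any such $K$, either $N$ is compact and $g(K)\leq g(N)$ by Theorem~\ref{thm:finiteGenusIsSize}, or $N$ is noncompact and $g(K)\leq g(N)$ directly from Definition~\ref{def:genus}. If $M$ is compact, take $K=M$ to conclude. If $M$ is noncompact, take the supremum over all compact subsurfaces $K\subset M$; by Definition~\ref{def:genus} this supremum is $g(M)$, and each term is bounded above by $g(N)$, so $g(M)\leq g(N)$.

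There is essentially no obstacle here: the combinatorial work has already been done in Theorem~\ref{thm:finiteGenusIsSize} for compact surfaces, and the passage to noncompact surfaces is a formal consequence of taking suprema. The only point requiring a touch of care is bookkeeping between the two a priori distinct definitions of $g$ on compact surfaces, which is dispatched immediately by Theorem~\ref{thm:finiteGenusIsSize} as noted above.
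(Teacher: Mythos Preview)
Your proof is correct. The paper actually states Theorem~\ref{thm:genusissize} without proof, presenting it simply as the natural ``extension of Theorem~\ref{thm:finiteGenusIsSize}'' to surfaces with compact boundary; your argument---reducing both claims to Theorem~\ref{thm:finiteGenusIsSize} by noting that every compact subsurface of $M$ is a compact subsurface of $N$ and taking suprema---is exactly the routine verification the authors left implicit.
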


Parity applies to noncompact surfaces with compact boundary provided the surface is orientable outside of a compact subset. When $N$ is connected, this is equivalent to $N$ having no non-orientable ends (see Definition~\ref{def:endOrientable}).
If $N$ is orientable outside of a compact subset, then by Theorem~\ref{thm:subsurfaces}, there exists a compact subsurface $M \subset N$ such that $\bd{N} \subset M$ and $N \setminus M$ is orientable. Following Lemma~\ref{lem:parityIsOrientability}, we would like to define $\parity{N}$ to equal $\parity{M}$.

To prove this is well-defined, consider two compact subsurfaces $M_1, M_2 \subset N$ such that $N \setminus M_1$ and $N \setminus M_2$ are both orientable.
Let $S \subset N$ be a subsurface such that $M_1 \cup M_2 \subset S$.
By Lemma~\ref{lem:parityIsOrientability}, $\parity{M_1} = \parity{E}$ and $\parity{M_2} = \parity{E}$.
Thus, $\parity{M_1} = \parity{M_2}$. As a consequence, the following is well-defined.

\begin{definition}
Let $N$ be a surface with compact boundary that is orientable outside of a compact subset. The \defword{parity} of $N$, denoted $\parity{N}$, is the parity of any compact subsurface $M \subset N$ such that $\bd{M} \subset N$ and $N \setminus M$ is orientable.
\end{definition}

When $g(N)$ is finite, we can see that $\parity{N} \equiv 2g(N)$ (mod 2).
However, the parity of a surface may exist even when the genus is infinite.

\subsection{End-Invariants}
\label{subsec:endInvariants}

To continue our exploration of the properties of noncompact surfaces, we will need to classify the ends of a surface.
We will do this through the idea of an ``end-invariant'', a concept that can actually be applied to any topological space which is nice for ends.

Let $X$ and $X'$ be nice for ends. Define two ends $\eta \in \E{X}$ and $\eta' \in \E{X'}$ to be \defword{isomorphic} if there exist closed neighborhoods $A$ of $\eta$ and $A'$ of $\eta'$ and a homeomorphism $f: A \rightarrow A'$ such that $\E{f}(\eta) = \eta'$. It can be checked that isomorphism of ends is an equivalence relation.
An \defword{end-invariant} is any property or quantity associated to an end that is invariant under end isomorphism. 
One particularly important end-invariant is its orientability.

\begin{definition} \label{def:endOrientable}
An end $\eta$ of a manifold $M$ is \defword{orientable} if $\eta$ has a closed neighborhood $A$ such that $\intr{A}$ is orientable. Otherwise, we say that $\eta$ is \defword{non-orientable}.
\end{definition}

Examples of orientable and non-orientable ends are shown in Figure~\ref{fig:nonorientable}.
\begin{figure}[htb!]
\centering
\subfigure[Non-orientable end.]
{
    \label{fig:infnon}
    \includegraphics[scale=0.75]{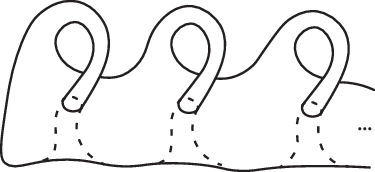}
}
\hspace{0.2cm}
\subfigure[Orientable end.]
{
    \label{fig:finnon}
    \includegraphics[scale=0.75]{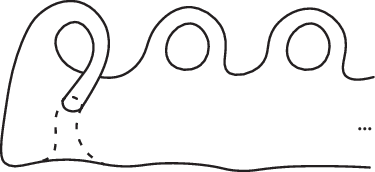}
}
\caption{One-ended, non-orientable surfaces.}
\label{fig:nonorientable}
\end{figure}
If $\eta$ has an orientable neighborhood $\eta(K)$ for some compact $K \subset M$, then every end in $\enbd{\eta}{K}$ is also orientable. Thus, the orientable ends of $M$ form an open subset of $\E{M}$.
Let $M$ be connected. If $M$ is orientable outside of a compact set, then all of its ends are orientable. Conversely, suppose that every end of $M$ is orientable. As every end of $M$ has an orientable neighborhood and $\E{M}$ is compact, $M$ is orientable outside of a compact set.

Another important end-invariant is the genus of an end.

\begin{definition}
	The \defword{genus} of an end $\eta$ of a surface $M$ with compact boundary is defined as the infimum of $g(\intr{A})$ over all closed neighborhoods $A$ of $\eta$.
\end{definition}

For example, any end collared by $S^1$ has genus zero.
If $A$ and $B$ are two neighborhoods of $\eta$ with $A \subset B$, then $g(\intr{A}) \leq g(\intr{B})$. Thus, the genus of an end can also be considered the limit of $g(U)$ for arbitrarily small neighborhoods $U$ of $\eta$. In fact, this limit can only have two possible values.

\begin{lemma}
	An end $\eta$ of a surface with compact boundary $M$ either has zero genus or infinite genus.
\end{lemma}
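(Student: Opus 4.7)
My plan is to prove the contrapositive: assuming $g(\eta) = k < \infty$, I will show $k = 0$. The idea is to isolate inside some fixed end-neighborhood a compact subsurface $T$ realizing the full genus $g(\eta)$, then push deeper into the end to find a strictly smaller end-neighborhood disjoint from $T$, and conclude by additivity of the generalized genus that this smaller neighborhood has genus zero, forcing $g(\eta)\leq 0$.

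First, using Theorem~\ref{thm:subsurfaces} to inductively replace the members of any ordinary compact exhaustion of $M$, I fix a compact exhaustion $S_1\subset S_2\subset\cdots$ of $M$ by compact subsurfaces. For each $i$ set $V_i = \eta(S_i)$, an open subsurface of $M$ whose frontier lies in $\partial S_i$, and note that $\overline{V_i}$ is a closed neighborhood of $\eta$. The sequence $\{\overline{V_i}\}$ is cofinal among closed neighborhoods of $\eta$: any closed neighborhood $B$ contains some $\eta(K)$, and choosing $i$ with $S_i\supset K$ gives $V_i\subset\eta(K)\subset B$. Consequently $g(\eta) = \inf_i g(V_i)$, where $g(V_i)$ is the generalized genus of the subsurface $V_i$ from Definition~\ref{def:genus}.

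Assume now $g(\eta) = k < \infty$. By Theorem~\ref{thm:genusissize} the sequence $(g(V_i))$ is non-increasing, and it takes values in the discrete set $\{0,\tfrac12,1,\tfrac32,\ldots\}\cup\{\infty\}$, so it stabilizes at $k$ for all indices $i\geq N$. Applying Definition~\ref{def:genus} to $V_N$, pick a compact subsurface $T\subset V_N$ with $g(T) = k$. Then choose $j\geq N$ large enough that $T\subset\intr{S_j}$; since $V_j = \eta(S_j)\subset M\setminus S_j$, the subsurface $V_j$ is disjoint from $T$ while still satisfying $V_j\subset V_N$.

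It remains to show $g(V_j) = 0$, which forces $g(\eta)\leq g(V_j) = 0$ and completes the proof. Suppose for contradiction that $g(V_j) > 0$; Definition~\ref{def:genus} then supplies a compact subsurface $W\subset V_j$ with $g(W) > 0$. Since $T$ and $W$ are disjoint compact subsurfaces, both contained in $V_N$, their disjoint union $T\sqcup W$ is a compact subsurface of $V_N$. Additivity of the generalized genus over disjoint unions gives $g(T\sqcup W) = g(T) + g(W) > k$, contradicting $g(T\sqcup W)\leq g(V_N) = k$ from Theorem~\ref{thm:genusissize}. The main subtlety I anticipate is ensuring each $V_i$ really is an honest subsurface with compact boundary so that Definition~\ref{def:genus} and Theorem~\ref{thm:genusissize} apply cleanly; this is precisely why I exhaust $M$ by compact \emph{subsurfaces} instead of by arbitrary compacta.
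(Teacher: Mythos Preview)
Your proof is correct and follows essentially the same approach as the paper's: both arguments assume the end has finite positive genus, locate a compact subsurface realizing that genus inside a neighborhood of the end, pass to a deeper neighborhood disjoint from that subsurface, and derive a contradiction from additivity of the generalized genus over disjoint unions. Your version is slightly more explicit about using a compact exhaustion by subsurfaces and the stabilization of the sequence $g(V_i)$, whereas the paper works directly with $\eta(K)$ and $\eta(K\cup L)$, but the underlying idea is identical.
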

\begin{proof}
Let $\eta$ be an end of $M$ with finite genus $h > 0$. Let $K \subset M$ be compact such that $g(\eta(K)) = h$. By definition, there exists a compact subsurface $L \subset \eta(K)$ such that $g(L) = h$. Now, $g(\eta(K \cup L))$ must still be $h$, so there exists a compact subsurface $L' \subset \eta(K \cup L)$ such that $g(L')=h$. However, $L$ and $L'$ are disjoint, so $g(L \cup L') = g(L) + g(L') = 2h$. In addition, $L \cup L'$ is a subset of $\eta(K)$. Thus, $g(\eta(K)) \geq g(L \cup L') = 2h$. This is a contradiction.
\end{proof}

Let $M$ be a surface with compact boundary. If $\eta$ is genus zero, then $g(\eta(K)) = 0$ for some compact $K \subset M$. Thus, every end in $\enbd{\eta}{K}$ has genus zero. Thus, the genus zero ends of $M$ form an open subset of $\E{M}$.

Let $M$ be a connected surface with compact boundary. If $M$ has finite genus, then every end of $M$ has finite genus (hence zero genus). Conversely suppose that every end of $M$ has finite genus. As every end of $M$ has a genus zero neighborhood and $\E{M}$ is compact, $M$ is genus zero outside of a compact set.

The genus and orientability of an end are related. If an end $\eta$ of a surface has genus zero, then it has a genus zero neighborhood. As every genus zero surface is orientable, this neighborhood is also orientable. Thus, every genus zero end of a surface with compact boundary is also an orientable end.

Beyond these more geometric end-invariants, there are a whole host of algebraic end-invariants. The most important of these are the homotopy, homology, and cohomology groups at infinity \cite[pp.~229--281,~369--401]{geoghegan}. These are incredibly important, and provide very powerful tools for distinguishing various ends, and by extension, distinguishing various manifolds. Like the respective invariants of algebraic topology (that is, the homotopy, homology, and cohomology groups), the algebraic end-invariants listed so far are all functorial. But whereas the homotopy, homology, and cohomology groups are invariant under homotopy, their end-invariant counterparts are invariant only under proper homotopy.

The cohomology groups at infinity have proven particularly useful in investigating the uniqueness of end sums. In forthcoming work, Calcut and Guilbault prove that the cohomology groups at infinity of an end sum are independent of which choices are made in the end sum. However, the ring structure of the cohomology at infinity is crucial in work thus far to distinguish between non-isomorphic end sums of two manifolds. In some classifications of noncompact surfaces, the cohomology ring at infinity plays a crucial role, carrying effectively the same information as the space of ends and the geometric invariants of the ends \cite{goldman}.

\subsection{The Classification Theorem}
\label{subsec:classification}
We may now state the classification of surfaces without boundary. For a proof, see Richards' thesis \cite{richards}.

\begin{theorem} \label{thm:2manifolds}
	Let $M$ and $N$ be two connected surfaces without boundary with the same genus, the same orientability class, and homeomorphic end-space considered as a topological triplet $(A, B, C)$ where $A$ is the space of non-orientable ends, $B$ is the space of infinite genus ends, and $C$ is the space of ends. Then, $M$ and $N$ are isomorphic.
\end{theorem}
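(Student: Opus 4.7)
The plan is to build an isomorphism $h : M \to N$ as a direct limit of compatible homeomorphisms between compact subsurfaces, invoking the classification of compact surfaces (Theorem~\ref{thm:finite2Manifolds}) at each stage.

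First, I would construct well-behaved compact exhaustions. Using Theorem~\ref{thm:subsurfaces} together with Lemma~\ref{lem:unboundedComponents} (to absorb any bounded complementary components), one produces exhaustions $K_1 \subset K_2 \subset \cdots$ of $M$ and $L_1 \subset L_2 \subset \cdots$ of $N$ by connected, compact, \PL\ subsurfaces such that every complementary component of each $K_i$ and each $L_i$ is unbounded with connected boundary a single circle. By Corollary~\ref{cor:existenceOfEnds2}, the finite collection of complementary components of $M \setminus K_i$ induces a clopen partition $\mathcal{P}_i$ of $\E{M}$; let $\mathcal{Q}_i$ denote the analogous partition of $\E{N}$ induced by $L_i$.

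Second, I would align the two exhaustions using the given triplet homeomorphism $\phi$. After passing to subsequences and enlarging each $K_i$ and $L_i$, I would arrange that $\phi$ carries $\mathcal{P}_i$ bijectively onto $\mathcal{Q}_i$, so complementary components come in matched pairs $(U,V)$. Because $\phi$ preserves the subspace of non-orientable ends and the subspace of infinite-genus ends, the discussion in Section~\ref{subsec:endInvariants} implies that in each matched pair, $U$ is orientable outside a compact set iff $V$ is, and $U$ has infinite genus iff $V$ does. Using the hypotheses $g(M) = g(N)$ and that $M,N$ share an orientability class, I would further absorb sufficiently many handles and crosscaps into some $K_{i_0}$ and $L_{i_0}$ so that, thereafter, the genus and parity of corresponding matched pairs of complementary components agree.

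Third, I would inductively define homeomorphisms $h_i : K_i \to L_i$ extending $h_{i-1}$. Each compact piece $\closr{K_i \setminus \intr{K_{i-1}}}$ decomposes along the boundary circles of $K_{i-1}$ into subsurfaces, one per such circle; likewise for $N$. The identifications with $K_{i-1}$ are prescribed by $h_{i-1}$, while the outer identifications are identifications of circles, free up to homeomorphism. Theorem~\ref{thm:finite2Manifolds} reduces the extension problem to matching the genus, parity, and number of boundary components of each piece; by step two, these all agree. The direct limit of the $h_i$ is then the desired isomorphism $M \cong N$. The most delicate part will be the combinatorial bookkeeping in step two: while the end-space triplet captures non-orientability and infinite genus at infinity, the total genus and orientability class of $M$ also involve ``core'' finite-genus and non-orientable handles, and distributing these among the matched complementary components so that each inductive extension finds a model in Theorem~\ref{thm:finite2Manifolds} requires foresight when choosing $K_{i_0}$. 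Since genus and parity are additive under $K_i = K_{i-1} \cup \closr{K_i \setminus \intr{K_{i-1}}}$, matched totals at step $i$ together with matched pieces at step $i-1$ do force matched pieces at step $i$, but ensuring matched totals persist through every stage is the heart of the argument.
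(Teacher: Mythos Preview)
The paper does not supply its own proof of this statement: immediately before Theorem~\ref{thm:2manifolds} it says ``For a proof, see Richards' thesis~\cite{richards}.'' So there is nothing in the paper to compare your proposal against; the theorem is imported as a black box and then used (via Corollary~\ref{cor:2manifoldsPlus}) to prove the main results.

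That said, your outline is essentially the classical Ker\'ekj\'art\'o--Richards strategy, and is a reasonable sketch of how such a proof goes. Two points worth flagging if you intend to flesh it out. First, the inductive construction must be a genuine back-and-forth: at even stages you enlarge on the $M$ side and find a matching enlargement in $N$, and at odd stages the roles swap, so that \emph{both} exhaustions are eventually exhausted; your step two and three read as if only the $M$-exhaustion drives the process. Second, your acknowledgment that ``ensuring matched totals persist through every stage is the heart of the argument'' is exactly right, but the proposal stops short of saying how to do it. In Richards' argument this is handled by a normalization of the exhaustions (arranging that each complementary piece already carries the correct genus/parity contribution relative to the end data before the induction begins) rather than by ad hoc absorption at a single stage $i_0$; a one-time absorption as you describe does not obviously propagate, since later complementary pieces can again differ in finite genus or parity unless the exhaustions are chosen with care from the outset.
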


We also have a natural extension to surfaces with compact boundary.

\begin{corollary} \label{cor:2manifoldsPlus}
If $M$ and $N$ are two connected surfaces with compact boundary with the same invariants used in Theorem~\ref{thm:2manifolds}, and in addition $b(M) = b(N)$, then $M$ and $N$ are isomorphic.
\end{corollary}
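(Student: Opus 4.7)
The plan is to cap off the boundary components and reduce to Theorem~\ref{thm:2manifolds}. Since $\partial M$ is compact and $b(M)=b(N)=k$, each of $\partial M,\partial N$ consists of exactly $k$ circles. Glue a closed $2$-disk to each boundary circle of $M$ via a homeomorphism of boundaries, forming a connected surface without boundary $M^*$, and analogously form $N^*$.

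Next I would check that $M^*$ and $N^*$ share the invariants in Theorem~\ref{thm:2manifolds}. Capping is the inverse of puncturing, which preserves genus (Section~\ref{subsec:compactsurfaces}), so $g(M^*)=g(M)=g(N)=g(N^*)$, and the orientability class is preserved since disks are orientable and admit compatible orientations. The union of the capping disks is a compact subset of $M^*$; any compact $K\subset M^*$ containing this union satisfies $M^*\setminus K = M\setminus(K\cap M)$, which yields a natural homeomorphism $\E{M^*}\cong\E{M}$. Since small neighborhoods of ends of $M^*$ may be chosen to avoid the caps, the orientability and genus of each end are unchanged. Thus the end-space triplet from Theorem~\ref{thm:2manifolds} is preserved, and applying that theorem produces a homeomorphism $f:M^*\to N^*$.

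Let $D_1,\ldots,D_k\subset M^*$ and $E_1,\ldots,E_k\subset N^*$ denote the capping disks. The images $f(D_1),\ldots,f(D_k)$ form a pairwise disjoint $k$-tuple of locally flat closed $2$-disks in $\intr{N^*}$, generally unrelated to the $E_j$. I would invoke the standard disk-uniqueness fact: in a connected \PL\ surface without boundary, any two pairwise disjoint, locally flat $k$-tuples of closed $2$-disks in the manifold interior are related by an ambient \PL\ homeomorphism, up to a permutation of the disks. This yields a homeomorphism $h:N^*\to N^*$ and a permutation $\sigma$ with $h(f(D_i))=E_{\sigma(i)}$. The composite $h\circ f$ then restricts to a homeomorphism $M\to N$ upon deleting the interiors of these disks from the source and the target.

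The main obstacle is this disk-matching step. It reduces to the classical fact that any locally flat closed disk in the interior of a connected \PL\ surface lies inside a Euclidean chart, that any two disks within a single chart are ambient-isotopic, and that connectedness plus a finite induction let one shuffle $k$ disjoint disks onto another prescribed $k$-tuple while avoiding collisions. Everything else is routine once one verifies that the invariants transfer cleanly under capping.
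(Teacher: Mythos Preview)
The paper does not supply a proof of this corollary; it is stated immediately after Theorem~\ref{thm:2manifolds} as a ``natural extension'' and left to the reader. Your capping argument is exactly the standard way to fill this in, and it is correct: cap each boundary circle with a disk, check that genus, orientability class, and the end-space triplet survive (all of which you do, using that the caps form a compact set disjoint from every sufficiently small end-neighborhood), apply Theorem~\ref{thm:2manifolds} to obtain $f:M^*\to N^*$, and then invoke disk-uniqueness to carry the caps of $M^*$ onto those of $N^*$ before excising interiors.

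The disk-matching step you single out as the main obstacle is indeed the only substantive point, and your sketch of it is right. In the \PL\ category one argues via regular-neighborhood uniqueness that any locally flat $2$-disk in the interior of a connected surface is ambient-isotopic to a standard disk in a chart; then an induction on $k$ handles the multi-disk case, using that the complement of a disk in a connected surface remains connected so the inductive step can proceed while avoiding the already-placed disks.
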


\section{Main Theorem} \label{sec:mainTheorem}

Our main theorem is the following.

\begin{theorem} \label{thm:endSumUniquenessPLCase}
	Let $M$ be a (possibly disconnected) surface with compact boundary, and let $\eta$ and $\eta'$ be distinct ends of $M$.
	Let $N$ be a result of adding a 1-handle at infinity to $M$ along $\eta$ and $\eta'$.
	If $\eta$ and $\eta'$ are ends of distinct connected components of $M$, then $N$ is unique up to isomorphism.
	If $\eta$ and $\eta'$ are ends of the same connected, non-orientable component $M_1$ of $M$, then $N$ is unique up to isomorphism.
	Lastly, suppose that $\eta$ and $\eta'$ are ends of the same connected, orientable component $M_1$ of $M$.
	If the 1-handle is oriented, then  $N$ is unique up to isomorphism.
	If the 1-handle is not oriented, then $N$ is unique up to isomorphism.
\end{theorem}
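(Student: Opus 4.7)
The plan is to apply the classification theorem for noncompact surfaces with compact boundary (Corollary~\ref{cor:2manifoldsPlus}) to each connected component of $N$. Since attaching a $1$-handle at infinity can alter only the component(s) involved in the attachment, the other components of $N$ agree with the corresponding components of $M$ and present no issue. The task thus reduces to showing that, for the ``extraordinary'' component of $N$, the invariants listed in Corollary~\ref{cor:2manifoldsPlus}---the number of boundary components, the genus, the orientability class, and the end-space triplet $(A,B,C)$ of non-orientable ends, infinite-genus ends, and all ends---depend only on the chosen ends $\eta,\eta'$ and, where relevant, on the orientation class of the handle.

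First I would dispatch the easy invariants. The number of boundary components is unchanged because $\bd M$ is compact and the rays are attached in the manifold interior. The space of ends $\E{N}$, together with the subspaces of non-orientable and of infinite-genus ends, is computed via Lemma~\ref{lem:partEndSumEnds}: $\E{N}$ is the quotient of $\E{M}$ obtained by identifying $\eta$ with $\eta'$. The only ``new'' point is the single extraordinary end $\eta^{\ast}$, and its position in the triplet is controlled by $\eta$ and $\eta'$: its genus is zero exactly when both $\eta$ and $\eta'$ have genus zero, and it is an orientable end exactly when both $\eta$ and $\eta'$ admit orientable neighborhoods that can be glued compatibly through the handle. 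Global orientability of the extraordinary component follows by the same accounting: it is orientable precisely when every involved component of $M$ is orientable and, when there is only one such component, the handle is oriented; in the remaining same-component non-orientable case the handle can be slid around an orientation-reversing loop so its orientation class is irrelevant.

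The substantive computation is the genus. Using Theorem~\ref{thm:subsurfaces}, I would fix compact exhaustions of each involved component of $M$ whose tails sit inside arbitrarily small neighborhoods of $\eta$ and $\eta'$, and then promote them to a compact exhaustion of the extraordinary component of $N$ by adjoining the compact core of the $1$-handle to each term. Applying the additivity formula for $g$ established inside the proof of Theorem~\ref{thm:finiteGenusIsSize} to each term and passing to the supremum expresses $g$ of the extraordinary component as an explicit function of the $g$ of the involved components of $M$ plus a contribution from the handle: zero when the handle merely glues two different components, one when it joins two ends of the same orientable component via an oriented handle, and a corresponding half-integer (combined with a change of orientability class) when it joins two ends of the same component via a non-oriented handle. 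The parity, where defined, is then pinned down by Lemma~\ref{lem:parityIsOrientability} from this computation.

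The hard part is the case-by-case bookkeeping within the same-component situation: one must verify that attaching a non-oriented handle between two ends of an orientable component yields a non-orientable surface with a specific genus increment, while an oriented handle yields an orientable surface with its own increment, and that in the non-orientable same-component case both handle orientations yield the same invariants. Once this is established, every invariant appearing in Corollary~\ref{cor:2manifoldsPlus} is expressed solely in terms of $\eta$, $\eta'$, and (where relevant) the orientation class of the handle, and uniqueness of $N$ up to isomorphism follows directly from that classification.
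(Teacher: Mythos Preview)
Your plan is essentially the paper's Section~\ref{sec:mainTheorem} proof: reduce to the component containing the handle, then compute each invariant in Corollary~\ref{cor:2manifoldsPlus} (boundary via Theorem~\ref{thm:boundaryOfTheEndSum}, end-space triplet via Lemma~\ref{lem:partEndSumEnds} and Theorems~\ref{thm:endSumEndOrientations} and the genus-of-ends theorem, global orientability via Theorem~\ref{thm:endSumOrientability}, genus and parity via Theorem~\ref{thm:endSumGenus}) and observe that all of them depend only on $M$, $\eta$, $\eta'$, and the handle's orientation class.

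Two small corrections. First, your genus bookkeeping is off in the non-oriented same-component case: the increment is still exactly $+1$, not a half-integer. With the paper's generalized genus, attaching the handle to a connected $M$ gives $g(N)=g(M)+1$ regardless of handle orientation (e.g.\ the open annulus yields a punctured torus or a punctured Klein bottle, both of genus~$1$); what changes is the orientability class, and the parity stays $\parity{N}=\parity{M}$ when defined. Second, the ``slide the handle around an orientation-reversing loop'' heuristic is not how the paper handles the non-orientable same-component case, and making it rigorous would itself require a ray-isotopy statement. The paper simply observes that in that case all invariants---including orientability of $N$, which is already forced to be non-orientable---are determined independently of the handle's orientation, so both choices yield isomorphic surfaces by the classification.
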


This has the following immediate corollary for the end sum of surfaces.

\begin{corollary} \label{cor:endSumPL}
Let $M$ and $M'$ be distinct connected surfaces with compact boundary.
Let $\eta$ be an end of $M$ and let $\eta'$ be an end of $M'$.
Then, the end sum of $M$ and $M'$ along $\eta$ and $\eta'$ is uniquely determined up to isomorphism. 
\end{corollary}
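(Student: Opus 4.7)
The plan is to apply Richards' classification theorem (Corollary~\ref{cor:2manifoldsPlus}) to the connected surface $N$ with compact boundary: it is determined up to isomorphism by its genus, orientability class, number of boundary components $b(N)$, and the topological triplet $(A,B,C)$ of its end space (non-orientable ends, infinite-genus ends, all ends). Thus it suffices to show that each of these invariants depends only on $(M,\eta,\eta')$ and, when relevant, on the orientation of the $1$-handle --- not on the choice of rays $r,r'$ or tubular neighborhood maps $\nu,\nu'$ used in the construction of $N=M\cup(1\text{-handle})$.

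The easy invariants I would dispatch first. Since the rays and tubular neighborhoods lie in the manifold interior of $M$, we have $\bd{N}=\bd{M}$, so $b(N)=b(M)$. Orientability of $N$ is immediate from the construction: $N$ is orientable iff $M$ is orientable and the $1$-handle is oriented (this is an obstruction read off the charts in Lemma~\ref{lem:endSumIsManifold}). The space of ends $\E{N}$ is given abstractly by Lemma~\ref{lem:partEndSumEnds} as the quotient of $\E{M}$ identifying $\eta$ with $\eta'$, and this quotient depends only on $(M,\eta,\eta')$; call the image point $\varepsilon\in\E{N}$, the unique extraordinary end. The natural bijection sends every ordinary end $\beta\in\E{N}$ to an end of $M$ with a neighborhood disjoint from the handle, so the genus and orientability of $\beta$ agree with those of its image in $\E{M}$.

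The heart of the argument is to show that the two end invariants of the extraordinary end $\varepsilon$ --- its genus (zero or infinite) and its orientability --- are determined by the data. For the genus, any neighborhood of $\varepsilon$ in $N$ contains, after removing a compact set, a neighborhood of $\eta$ and a neighborhood of $\eta'$ in $M$; conversely, given neighborhoods $A$ of $\eta$ and $A'$ of $\eta'$ whose closures are disjoint from the handle's attachment data, the union $A\cup A'\cup(1\text{-handle})$ is a neighborhood of $\varepsilon$. Using additivity of genus under disjoint union together with Theorem~\ref{thm:genusissize}, one concludes that $\varepsilon$ has infinite genus iff at least one of $\eta,\eta'$ has infinite genus. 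For orientability, the same kind of neighborhood gives a local model: $\varepsilon$ is orientable iff one can choose orientable neighborhoods of $\eta$ and $\eta'$ whose orientations extend across the $1$-handle. This forces both $\eta$ and $\eta'$ to be orientable, and, in the sub-case where $\eta$ and $\eta'$ lie in the same orientable component of $M$, additionally forces the handle to be oriented (else the two sides of the handle receive incompatible orientations through $M$). Granting these characterizations, $(A,B,C)$ is determined, genus of $N$ is determined by combining the known local contribution of the handle with $g(M)$, and Corollary~\ref{cor:2manifoldsPlus} concludes that $N$ is unique up to isomorphism in each case of the theorem.

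The main obstacle I expect is the careful orientability bookkeeping for $\varepsilon$: one must show that \emph{no} choice of rays in the specified ends can alter the orientability of a cofinal neighborhood of $\varepsilon$. The right tool is to work with arbitrarily small neighborhoods of $\varepsilon$ obtained by enlarging compact subsets of $M$ while keeping track of the handle, and to use that orientability of an end is determined by any cofinal system of neighborhoods (Definition~\ref{def:endOrientable}). Once this is set up cleanly, the genus computation for $\varepsilon$ follows the same template, and the resulting invariants of $N$ are manifestly independent of the rays and tubular neighborhood maps, which is all that is required.
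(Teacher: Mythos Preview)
Your approach is essentially the paper's: the corollary is stated as immediate from Theorem~\ref{thm:endSumUniquenessPLCase}, and the paper proves that theorem exactly as you outline---by computing each invariant in Corollary~\ref{cor:2manifoldsPlus} (boundary, orientability, genus, parity, end-space triplet) and showing it depends only on $(M,\eta,\eta')$ and possibly the handle orientation.

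That said, two points need correction. First, your global orientability claim ``$N$ is orientable iff $M$ is orientable and the $1$-handle is oriented'' is the \emph{connected} case; for the corollary $\eta$ and $\eta'$ lie in distinct components, and the correct statement (Theorem~\ref{thm:endSumOrientability}) is that $N$ is orientable iff both components are, \emph{regardless} of the handle orientation---because one may flip the orientation on one component to make any handle oriented. Without this observation your argument would not yield the corollary's unqualified uniqueness. Second, you omit parity: when $N$ is non-orientable but all its ends are orientable, Richards' classification requires $\parity{N}$, and the paper verifies $\parity{N}=\parity{M}$ in Theorem~\ref{thm:endSumGenus}. Add that check and the argument is complete.
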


The main tool we will use to prove Theorem~\ref{thm:endSumUniquenessPLCase} is the classification of surfaces with compact boundary (Corollary~\ref{cor:2manifoldsPlus}). That classification is based on the following characteristics of a connected surface $A$:
boundary, orientability, genus, parity, space of ends, subspace of infinite genus ends, and subspace of non-orientable ends.
In each of the next few subsections, we examine one or more of these characteristics and their behavior under the addition of a 1-handle at infinity.

\begin{conventions} \label{keyconventions}
For the remainder of this paper, unless otherwise stated, we will use the following conventions. Some of these conventions are reused from Section~\ref{sec:endSumIntroduction}.
\begin{itemize}
	\item All manifolds are PL.
	\item Let $M$ be an $n$-manifold, and let $\eta$ and $\eta'$ be ends of $M$ (not necessarily distinct).
	Let $r$ and $r'$ be disjoint rays in $M$ pointing to $\eta$ and $\eta'$ respectively.
	Let $\nu$ and $\nu': \R_+ \times D^{n-1} \rightarrow M$ be tubular neighborhood maps of $r$ and $r'$ respectively with disjoint images.
	\item Let $E = \R^n$, and define the subspaces $E_- = (-\infty, 0) \times \R^{n-1}$, $E_0 = \{0\} \times \R^{n-1}$, and $E_+ = (0, \infty) \times \R^{n-1}$.
	\item Let $N$ be the result of adding a 1-handle at infinity to $M$ along the tubular neighborhood maps $\nu$ and $\nu'$ by attaching the strip $E$ as described in Section~\ref{sec:endSumIntroduction}. Let $\phi_-: E_- \rightarrow M$ and $\phi_+: E_+ \rightarrow M$ be the gluing maps used in the construction of $N$. Let $i_M$ be the natural embedding of $M$ into $N$, and let $i_E$ be the natural embedding of $E$ into $N$.
	\item Let $R_m = [-m,m]^n$. We will use $(R_m)$ as a compact exhaustion of $E$.
	\item $(K_m)$ is a compact exhaustion of $M$ as specified in Theorem~\ref{thm:steppingStone}.
	\item Let $L_m = i_M(K_m) \cup i_E(R_m)$. We will use $(L_m)$ as a compact exhaustion of $N$.
\end{itemize}
\end{conventions}

In our study of the ends of $M$ and $N$, it will be helpful to have exhaustions that are situated nicely with respect to both the rays $r$ and $r'$ and the tubular neighborhood maps $\nu$ and $\nu'$.

\begin{lemma} \label{lem:compactSteppingStone}
	For every compact subset $K \subset M$, there exists a compact $n$-manifold $A \subset M$ such that:
	(i) $K \subset \intr{A}$,
	(ii) $\nu^{-1}(A) = [0, a] \times D^{n-1}$, and
	(iii) $\nu'^{-1}(A) = [0, a'] \times D^{n-1}$ for some $a, a' > 1$. This situation is presented in Figure~\ref{fig:nicePLExhaustion}.
\end{lemma}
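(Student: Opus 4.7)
The plan uses properness of the tubular neighborhood maps to control how far $K$ reaches into the tubes, together with PL regular-neighborhood theory adapted to the tube structure.

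By properness of $\nu$, the preimage $\nu^{-1}(K)$ is a compact subset of $\R_+ \times D^{n-1}$, so it lies in $[0, a) \times D^{n-1}$ for some $a > 1$; choose $a' > 1$ analogously for $\nu'$. Set $C := \nu([0, a] \times D^{n-1})$ and $C' := \nu'([0, a'] \times D^{n-1})$: disjoint compact PL $n$-balls in $\intr M$, and by the choice of $a, a'$ we have $K \cap \image \nu \subset C$ and $K \cap \image{\nu'} \subset C'$.

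To construct $A$, fix a PL triangulation of $M$ that restricts, via $\nu$ and $\nu'$, to product triangulations on the tube images in which $\{a\}$ and $\{a'\}$ are vertices of the first factor and $C$, $C'$ are subcomplexes. Such a triangulation exists: start with a standard product triangulation of $\R_+ \times D^{n-1}$, push it through $\nu$ and $\nu'$ to obtain subcomplex structures on each tube image, then extend to all of $M$. In this triangulation the cross-sectional disks $\nu(\{a\} \times D^{n-1})$ and $\nu'(\{a'\} \times D^{n-1})$ are subcomplexes, so, after a sufficiently fine barycentric subdivision, one obtains a closed PL regular neighborhood $A$ of $K \cup C \cup C'$---truncated along these cross-sectional disks---with $A \cap \image \nu = C$ and $A \cap \image{\nu'} = C'$. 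Standard PL regular-neighborhood theory then shows $A$ is a compact PL $n$-submanifold with $K \cup C \cup C' \subset \intr A$, giving (i). The truncation is exactly the condition $\nu^{-1}(A) = [0, a] \times D^{n-1}$ and $\nu'^{-1}(A) = [0, a'] \times D^{n-1}$, which is (ii) and (iii).

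The main obstacle is ensuring that $A$ really is a PL submanifold: the boundary of the regular neighborhood must meet each cross-sectional disk along a PL codimension-one sphere, and the ``inside the tube'' pieces $C, C'$ must glue to the ``outside'' piece along a PL $(n-1)$-submanifold to form a PL manifold. This is handled by the adapted triangulation---because the cross-sectional disks and the sub-cylinders are subcomplexes of a product triangulation on each tube, the truncation respects the PL structure and the regular neighborhood is built by standard PL machinery relative to these subcomplex structures.
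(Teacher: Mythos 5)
Your overall strategy (thicken $K$ together with initial tube segments and control the intersection with the tubes) is in the spirit of the paper's proof, but the step that does all the work is asserted rather than proved, and as stated it is inconsistent. A regular neighborhood of $K \cup C \cup C'$ must contain an $M$-neighborhood of $C$, hence it necessarily contains points of $\image{\nu}$ with first coordinate greater than $a$ (both in the open tube and along the lateral boundary $\nu(\R_+ \times \bd{D^{n-1}})$). So no regular neighborhood can satisfy $A \cap \image{\nu} = C$; you must genuinely cut away the protruding part, and then $A$ is no longer a regular neighborhood and your claim $K \cup C \cup C' \subset \intr{A}$ fails for $C$ (harmless for (i), since $K$ misses parameter $\geq a$, but symptomatic). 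The real gap is the manifold property of the truncated set: at the corner sphere $\nu(\{a\} \times \bd{D^{n-1}})$, and at every point where the frontier of the neighborhood meets the lateral tube boundary at parameters just beyond $a$, one must know that the frontier meets $\bd{\image{\nu}}$ in a standard (transverse) local picture. An adapted product triangulation does not give this for free: the derived neighborhood of $C$ inside the tube is a union of simplices of a subdivision, and its frontier in the tube is some PL $(n-1)$-disk, not a flat cross-section $\{b\} \times D^{n-1}$; straightening it, or controlling how $\bd{A}$ meets $\bd{\image{\nu}}$, requires exactly the PL transversality/straightening argument that is missing from your sketch. "Standard regular-neighborhood theory relative to subcomplexes" does not by itself produce the exact product preimage in (ii)--(iii).

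The paper avoids truncation altogether, and this is the key idea you are missing. Note the lemma only asks for \emph{some} $a, a' > 1$, so one is free to enlarge the tube parameter after choosing the neighborhood. The paper takes a regular neighborhood $T$ of $K \cup \nu([0,c] \times D^{n-1}) \cup \nu'([0,c'] \times D^{n-1})$, uses Armstrong--Zeeman PL transversality to isotope $\bd{T}$ transverse to $\bd{\image{\nu}}$ and $\bd{\image{\nu'}}$, then chooses $d, d' > 1$ so large that $\nu^{-1}(T) \subset [0,d] \times D^{n-1}$ and $\nu'^{-1}(T) \subset [0,d'] \times D^{n-1}$, and sets $A = T \cup \nu([0,d] \times D^{n-1}) \cup \nu'([0,d'] \times D^{n-1})$. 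Conditions (ii) and (iii) are then automatic (with $a = d$, $a' = d'$), and the only points where local Euclidean-ness is not obvious are those of $\bd{T} \cap \bd{\image{\nu}}$ and $\bd{T} \cap \bd{\image{\nu'}}$, where the transversality normal form settles it. To repair your argument you would either have to add such a transversality/straightening step at the tube boundary before cutting, or simply switch to the enlarging trick; fixing $a$ in advance and forcing the neighborhood not to protrude past the cross-section is the hard route, and it is precisely where your proposal has no proof. (A minor shared point: $K$ is only a compact set, so before invoking regular-neighborhood theory one should replace it by the union of closed simplices of a fine triangulation meeting it; the paper is equally brief about this.)
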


\begin{figure}[htb!]
	\begin{center}
		\includegraphics[scale=0.85]{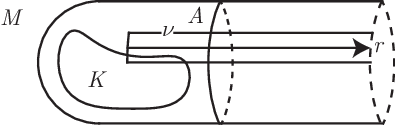}
		\caption{Surface $M$ containing a compact set $K$, ray $r$, and tubular neighborhood $\nu$ of $r$.
			The compact subsurface $A$ of $M$ contains $K$ in its interior and meets $\nu$ nicely.}
		\label{fig:nicePLExhaustion}
	\end{center}
\end{figure}

\begin{proof}
Pick $c$ such that $\nu^{-1}(K) \subset [0, c] \times D^{n-1}$. Similarly for $c'$ and $\nu'$. Set
\[
	L = K \cup \nu([0,c] \times D^{n-1}) \cup \nu'([0, c'] \times D^{n-1})
\]
Let $T$ be a regular neighborhood of $L$ in $M$
(see Rourke and Sanderson~\cite[Ch.~3]{rourkesanderson} and Scott~\cite{scott} for the \PL theory of regular neighborhoods).
Importantly for us, $K \subset \intr{T}$ and $T$ is a compact $n$-dimensional submanifold of $M$ \cite[p.~34]{rourkesanderson}.

Construct an ambient isomorphism $\psi$ with support disjoint from $L$ and $\image{\nu'}$
such that $\psi(\bd{T})$ is transverse to $\bd{\image{\nu}}$ \cite[p.~185]{az}.
Replace $T$ by $\psi(T)$.
Note that $T$ is still a regular neighborhood of $L$, but now intersects $\image{\nu}$ transversely.
Similarly, modify $T$ to be transverse to $\image{\nu'}$ as well.

Pick $d > 1$ such that $\nu^{-1}(T)$ is contained in $[0, d] \times D^{n-1}$. Similarly, find $d'$. Set
\[
	A = T \cup \nu([0,d] \times D^{n-1}) \cup \nu'([0,d'] \times D^{n-1})
\]
It is clear that $A$ is locally Euclidean at all points except $\bd{T} \cap \bd{\image{\nu}}$ and $\bd{T} \cap \bd{\image{\nu'}}$. By transversality, for every $p \in \bd{T} \cap \bd{\image{\nu}}$, there exists a neighborhood $U$ of $p$ and a coordinate map $\phi: U \rightarrow \R^n$ where $\phi(p) = 0$, $\phi(T) = [0, \infty) \times \R \times \R^{n-2}$, and $\phi(\image{\nu}) = \R \times [0, \infty) \times \R^{n-2}$. In this local coordinate map, it is clear that $A$ is locally Euclidean at $p$. Thus, $A$ is a $n$-dimensional submanifold of $M$.
Since $K \subset \intr{T}$, $K \subset \intr{A}$. By construction, $\nu^{-1}(A) = [0,d] \times D^{n-1}$ and $\nu'^{-1}(A) = [0,d'] \times D^{n-1}$.
\end{proof}

%\begin{proof}
%(*) This can be shown using triangulations. Fix a triangulation of $M$ such that no triangle intersects both $\image{\nu}$ and $\image{\nu'}$. Refine the triangulation by making the boundaries of $\image{\nu}$ and $\image{\nu'}$ composed of edges in the triangulation. Pick $a > 0$ large enough that no triangle intersects both $\nu([a, \infty])$ and $C \cup \bd{M}$. Pick $a' > 0$ large enough that no triangle intersects both $\nu'([a', \infty])$ and $C \cup \bd{M}$. Now, refine the triangulation by making the boundary of $\nu([-1, 1] \times \{a\})$ and the boundary of $\nu([-1, 1] \times \{a'\})$ all line segments. Let $A$ be the union of all triangles in this new triangulation which intersect $K \cup \bd{M}$, or which intersect $\nu((-1, 1) \times (0, a))$, or which intersect $\nu'((-1, 1) \times (0,a'))$. $A$ is then a compact, \PL\ subset of $M$ which satisfies all the required properties except that $A$ is not necessarily a manifold. We now show how to turn $A$ into a manifold.
%
%The only points where $A$ fails to be locally Euclidean are vertices $v$ of our triangulation where some, but not all, of the triangles touching $v$ are in $A$. Call these vertices the ``bad points'' of $M$. Further subdivide the triangulation so that all vertices in the original triangulation are separated by at least three line segments in the new triangulation. If we now add to $A$ all triangles touching a bad point of $M$, but not touching $\nu((a, \infty) \times [-1, 1])$ or $\nu'((a', \infty) \times [-1, 1])$, the resulting \PL\ subset is a subsurface of $M$.
%\end{proof}

By constructing larger and larger submanifolds using Lemma~\ref{lem:compactSteppingStone}, we can create a nice compact exhaustion for $M$.

\begin{theorem} \label{thm:steppingStone}
	Let $C \subset M$ be compact.
	There exists a compact exhaustion $(K_m)$ of $M$ such that
	(i) $C \subset K_m$ for all $m$,
	(ii) $K_m$ is an $n$-manifold for all $m$,
	(iii) $\nu^{-1}(K_m) = [0, a_m] \times D^{n-1}$ and $\nu'^{-1}(K_m) = [0, a_m'] \times D^{n-1}$ for some sequences $a_m, a_m' > 1$.
\end{theorem}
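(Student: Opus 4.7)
The plan is to construct $(K_m)$ recursively by iterating Lemma~\ref{lem:compactSteppingStone}. First I would fix any compact exhaustion $(J_m)$ of $M$, which exists since $M$ is $\sigma$-compact and locally compact. Then I would set $K_1$ equal to the output of Lemma~\ref{lem:compactSteppingStone} applied to the compact set $C \cup J_1$, and inductively define $K_m$ for $m \geq 2$ to be the output of Lemma~\ref{lem:compactSteppingStone} applied to $C \cup J_m \cup K_{m-1}$. Each such input is compact (a finite union of compact sets), so the lemma applies at every stage.

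Properties (i)--(iii) of the theorem would then follow directly from the conclusions of Lemma~\ref{lem:compactSteppingStone}. Since $C$ is included in the input at every stage, $C \subset \intr{K_m} \subset K_m$, giving (i). Each $K_m$ is a compact $n$-dimensional submanifold of $M$ by the lemma, giving (ii). The tubular neighborhood pullbacks satisfy $\nu^{-1}(K_m) = [0, a_m] \times D^{n-1}$ and $\nu'^{-1}(K_m) = [0, a_m'] \times D^{n-1}$ with $a_m, a_m' > 1$ as guaranteed by the lemma, giving (iii). To confirm $(K_m)$ is a compact exhaustion, I would note that by construction $K_{m-1} \subset \intr{K_m}$ for $m \geq 2$, and $J_m \subset \intr{K_m}$ for every $m$, so
\[
\bigcup_{m=1}^{\infty} \intr{K_m} \;\supseteq\; \bigcup_{m=1}^{\infty} J_m \;=\; M.
\]

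I do not anticipate any real obstacle, as Lemma~\ref{lem:compactSteppingStone} does all of the geometric work at each stage. The only mild care required is to feed \emph{both} $K_{m-1}$ and $J_m$ into the lemma at stage $m$: including $K_{m-1}$ is what forces the nesting needed for an exhaustion, while including $J_m$ is what forces the sequence $(K_m)$ to sweep out all of $M$ rather than stabilizing inside some proper subset. Once these two observations are made, the entire construction reduces to bookkeeping.
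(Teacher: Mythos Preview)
Your proposal is correct and follows essentially the same approach as the paper: start from an arbitrary compact exhaustion and inductively apply Lemma~\ref{lem:compactSteppingStone} to the union of the previous $K_{m-1}$, the $m$th term of the auxiliary exhaustion, and $C$. The paper's version is slightly terser (it sets $K_0 = C$ and then feeds $A_m \cup K_{m-1}$ into the lemma), but the construction and verification are the same.
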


\begin{proof}
Start with any compact exhaustion $(A_m)$ of $M$. Inductively build $(K_m)$ as follows: set $K_0 = C$. For all $m > 0$, let $K_m$ be a compact submanifold of $M$ containing $A_m \cup K_{m-1}$ that has nice intersection with $\nu$ and $\nu'$, as given by Lemma~\ref{lem:compactSteppingStone}.
\end{proof}

Using a compact exhaustion of $M$ provided by Theorem~\ref{thm:steppingStone}, we may construct a nice compact exhaustion of $N$ as well. Define $L_m = R_m \cup K_m$. Because $K_m$ and $R_m$ have transverse intersection, $(L_m)$ is a compact exhaustion of $N$ by $n$-manifolds. In this section, we use the compact exhaustions: $(K_m)$ of $M$, $(R_m)$ of $E$, and $(L_m)$ of $N$. These exhaustions are depicted in Figure~\ref{fig:compactexhaustion}.

\begin{figure}[htb!]
	\centering
	\subfigure[Manifolds $M$ and $E$ with compact exhaustions $(K_m)$ and $(R_m)$ respectively]
	{
	    \label{fig:ceinput}
	    \includegraphics[scale=0.70]{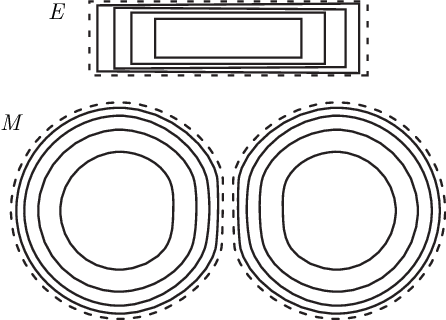}
	}
	\hspace{0.1cm}
	\subfigure[End sum $N$ with compact exhaustion $(L_m)$. The image of $E$, $i_E(E)$, is dashed for clarity]
	{
	    \label{fig:ceoutput}
	    \includegraphics[scale=0.70]{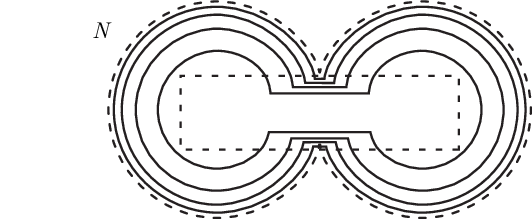}
	}
	\caption{Constructing the compact exhaustion $(L_m)$ of the end sum $N$.}
	\label{fig:compactexhaustion}
\end{figure}

\subsection{Space of Ends} \label{subsec:endSumEnds}

In this section, we tackle the end-space of $N$. In the process, we will encounter ordinary and extraordinary ends of $N$, and the map $\eh: \E{N} \rightarrow \E{M} / (\eta \equiv \eta')$.

Call an end $\alpha$ of $N$ \defword{ordinary} provided that it has a neighborhood contained in $i_M(M)$, or equivalently, that it has a neighborhood disjoint from $i_E(E_0)$. Otherwise, call $\alpha$ \defword{extraordinary}. An ordinary end is essentially not involved in the 1-handle construction, and is the simplest to describe.

\begin{lemma} \label{lem:componentsOfComplementOrdinary}
If $\alpha$ is an ordinary end of $N$, then $\alpha(i_M(K))$ is disjoint from $i_E(E_0)$ for sufficiently large compact $K \subset M$.
\end{lemma}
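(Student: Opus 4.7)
The plan is to invoke ordinariness on the compact exhaustion: there exists $m_0$ with $\alpha(L_m) \subset i_M(M)$ for all $m \ge m_0$, and then $V_m := i_M^{-1}(\alpha(L_m))$ is a connected component of $M \setminus (K_m \cup T_m \cup T_m')$, where $T_m := \phi_+(R_m \cap E_+)$ and $T_m' := \phi_-(R_m \cap E_-)$ are the ``wings'' of the $1$-handle sitting in the tubular neighborhoods. Let $W_m$ denote the connected component of $M \setminus K_m$ containing $V_m$. Since $\image{\nu} \setminus K_m = \nu((a_m, \infty) \times D^{n-1})$ is contained in $\eta(K_m)$ (and likewise for $\nu'$), whenever $W_m \notin \{\eta(K_m), \eta'(K_m)\}$ the tubes miss $W_m$, giving $V_m = W_m$ and $\alpha(i_M(K_m)) = i_M(W_m)$, a component of $N \setminus i_M(K_m)$ distinct from the one containing $i_E(E_0)$ (which equals $i_E(E_0) \cup i_M(\eta(K_m) \cup \eta'(K_m))$). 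Taking $K = K_m$ then yields the desired conclusion, so it suffices to exclude $W_m = \eta(K_m)$ and $W_m = \eta'(K_m)$ for all sufficiently large $m$.

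Suppose for contradiction that $W_m = \eta(K_m)$ (the other case is symmetric). The key assertion is that every connected component of $\eta(K_m) \setminus (T_m \cup T_m')$ contains a lateral tube point $\nu(t, z)$ with $t > a_m$ and $|z|$ close to $1$. This reduces to two observations. First, the frontier $\partial U \cap \eta(K_m) = \nu((a_m, \infty) \times \partial D^{n-1})$ lies inside $\eta(K_m) \setminus (T_m \cup T_m')$, because $\phi_+$ sends $E_+$ into the $\intr{D^{n-1}}$-coordinates of $U$ so $T_m \cap \partial U = \varnothing$, and $\closr{U} \cap U' = \varnothing$ by disjointness and properness of $\nu, \nu'$. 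Second, since $\eta(K_m)$ is connected and meets $\image{\nu}$, every component of the open set $\eta(K_m) \setminus \image{\nu}$ has $\partial U \cap \eta(K_m)$ in its closure (otherwise it would be clopen in $\eta(K_m)$). Consequently, inside $\eta(K_m) \setminus (T_m \cup T_m')$ the frontier $\partial U \cap \eta(K_m)$ glues every outside-tube piece to the adjacent lateral-tube piece $(U \setminus T_m) \cap \eta(K_m)$, so no component can avoid lateral points.

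Picking a lateral $p = \nu(t, z) \in V_m$ with $t > a_m$ and $|z|$ close to $1$, write $p = i_E(x, y)$ via $\phi_+$ (so $x = 1/t$ is small and $|y|$ is large). The straight segment $\{(\tau, y) : 0 \le \tau \le x\}$ in $E$ lies outside $i_E^{-1}(L_m)$: for $|y| > m$ it misses $R_m$, and for $\tau \le x < 1/a_m$ its $\phi_+$-image misses $K_m$ (using the choice of $a_m$ from Theorem~\ref{thm:steppingStone}). Its $i_E$-image is therefore a path in $N \setminus L_m$ joining $i_M(V_m)$ to $i_E(\{0\} \times \{y\}) \subset i_E(E_0 \setminus R_m)$. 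Hence $i_M(V_m)$ and $i_E(E_0 \setminus R_m)$ lie in one component of $N \setminus L_m$, namely $\alpha(L_m)$, forcing $\alpha(L_m) \cap i_E(E_0) \ne \varnothing$ and contradicting ordinariness. The main obstacle is the connectedness-at-the-frontier argument: the lateral tubular boundary must simultaneously live inside $\eta(K_m) \setminus (T_m \cup T_m')$ in $M$ and approach $E_0$ in $N$, and it is this dual role that forces any ostensibly ordinary tail near $\eta$ to catch points of $i_E(E_0)$.
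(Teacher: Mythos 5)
Your argument is correct in its main thrust but follows a genuinely different route from the paper's. The paper works entirely inside $N$ near the core of the handle: ordinariness gives an $m$ with $\alpha(L_m)$ disjoint from $i_E(E_0)$; since for small $a$ the set $L_m$ meets the slab $U_a=i_E((-a,a)\times\R^{n-1})$ in the product $i_E((-a,a)\times[-m,m]^{n-1})$, every component of $U_a\setminus L_m$ touches $i_E(E_0)$, so $\alpha(L_m)$ avoids $U_a$ altogether; deleting $U_a$ from $L_m$ then leaves a compact subset of $i_M(M)$ whose complementary component for $\alpha$ is still $\alpha(L_m)$, which is the conclusion. That proof is a few lines, uses nothing about the tube coordinates beyond the product structure of $L_m$ near $E_0$, and applies verbatim whether or not $\eta=\eta'$. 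You instead work inside $M$: you identify $V_m=i_M^{-1}(\alpha(L_m))$ as a component of $M\setminus(K_m\cup T_m\cup T_m')$, note that only the components $\eta(K_m)$ and $\eta'(K_m)$ of $M\setminus K_m$ can meet the handle wings, and exclude $W_m\in\{\eta(K_m),\eta'(K_m)\}$ by a frontier/clopen argument that produces a lateral tube point in $V_m$, from which an explicit segment in $E$ escapes to $i_E(E_0)$ inside $N\setminus L_m$, contradicting ordinariness. Your reduction to $W_m$, the identification of $i_M^{-1}(L_m)$ with $K_m\cup T_m\cup T_m'$, and the escape-path estimates are all sound (though you implicitly fix the reversal in $\phi_+$ to be $x\mapsto 1/x$; harmless, but worth saying). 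The trade-off: your proof is longer and leans on Theorem~\ref{thm:steppingStone}, but it pre-computes which component of $N\setminus i_M(K_m)$ an ordinary end inhabits, which is essentially the content of the subsequent lemmas.

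One point needs repair. The lemma is stated under Conventions~\ref{keyconventions}, where $\eta$ and $\eta'$ need not be distinct, and it is invoked in the continuity part of Lemma~\ref{lem:partEndSumEnds}, which also allows $\eta=\eta'$. When $\eta(K_m)=\eta'(K_m)$, both tubes lie in $\eta(K_m)$ and your key assertion is false as literally stated: a component of $\eta(K_m)\setminus(T_m\cup T_m')$ may abut only the $\nu'$ tube and contain no lateral point of $\nu$; moreover your clopen argument, which removes only $\image{\nu}$, does not detect such a component (it is generally a proper subset of a component of $\eta(K_m)\setminus\image{\nu}$, so the frontier conclusion for the larger piece does not transfer). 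The fix is the evident symmetric one: run the clopen argument on $\eta(K_m)\setminus(\image{\nu}\cup\image{\nu'})$ to conclude that every component of $\eta(K_m)\setminus(T_m\cup T_m')$ contains a lateral point of $\nu$ or of $\nu'$, and in the latter case send the escape segment through $E_-$ via $\phi_-$ (using $a_m'$). Alternatively, if one only needs the case $\eta\neq\eta'$, take $m$ large enough that $\eta(K_m)\neq\eta'(K_m)$, after which $\image{\nu'}$ is disjoint from $\eta(K_m)$ and your argument goes through as written; but the statement as given in the paper requires the symmetric clause.
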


\begin{proof}
Because $\alpha$ is ordinary, $\alpha(L_m)$ is disjoint from $i_E(E_0)$ for some $m$. Consider $U_a = i_E((-a, a) \times \R^{n-1})$. For small enough $a$, $L_m \cap U_a = i_E((-a, a) \times [-m,m]^{n-1})$ by construction of $L_m$. If so, then all components of $U_a \setminus L_m$ intersect $i_E(E_0)$. So, $\alpha(L_m)$ is disjoint from $U_a$, and therefore $\alpha(L_m)$ is a connected component of $N \setminus (L_m \setminus U_a)$. Thus, $\alpha(L_m \setminus U_a) = \alpha(L_m)$ is disjoint from $i_E(E_0)$.
\end{proof}

\begin{lemma} \label{lem:componentsOfComplementExtraordinary}
If $\alpha$ is an extraordinary end, then $\alpha(i_M(K))$ is the disjoint union $i_M(\eta(K)) \cup i_M(\eta'(K)) \cup i_E(E_0)$ for all compact $K \subset M$.
\end{lemma}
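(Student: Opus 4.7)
Write $V := i_M(\eta(K)) \cup i_M(\eta'(K)) \cup i_E(E_0)$. The plan is to show that $V$ is a connected component of $N \setminus i_M(K)$ and then identify $V$ with $\alpha(i_M(K))$ using that $\alpha$ is extraordinary. First I would verify $V \subset N \setminus i_M(K)$: the sets $\eta(K)$ and $\eta'(K)$ are components of $M \setminus K$ hence disjoint from $K$, and $i_E(E_0)$ is disjoint from $i_M(M)$ by the construction of $N$.

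Next, I would show that $V$ is open in $N$. At points of $i_M(\eta(K))$ or $i_M(\eta'(K))$ this follows from Corollary~\ref{compsopen}, since $\eta(K)$ and $\eta'(K)$ are open in $M$. At a point $p = i_E(0,x) \in i_E(E_0)$, take a basic neighborhood $W = i_E((-\delta,\delta) \times U)$ with $U$ a Euclidean neighborhood of $x$. Properness of $\nu$ gives a $b > 0$ with $\nu^{-1}(K) \subset [0,b] \times D^{n-1}$, so $\nu((b,\infty) \times D^{n-1}) \subset \eta(K)$ (it is connected, contains a tail of the ray $r$, and is disjoint from $K$). Since $\phi_+$ reverses the first factor, for $\delta$ small enough $\phi_+((0,\delta) \times U) \subset \nu((b,\infty) \times D^{n-1})$, so $i_E((0,\delta) \times U) \subset i_M(\eta(K))$; the symmetric argument with $\phi_-$ gives $i_E((-\delta,0) \times U) \subset i_M(\eta'(K))$. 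Therefore $W \subset V$.

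The same tube-tail identifications exhibit $V$ as connected: $i_E(E_0)$, $i_M(\eta(K))$, and $i_M(\eta'(K))$ are each connected, and the paths $i_E([0,\delta] \times \{0\})$ and $i_E([-\delta,0] \times \{0\})$ glue them together inside $V$. For closedness in $N \setminus i_M(K)$, I would show that $i_E(E_0)$ is closed in $N$ (every point outside it admits either a local $i_E$-product or $i_M$-chart neighborhood disjoint from $E_0$), and that limit points in $N$ of $i_M(\eta(K))$ lie in $i_M(\overline{\eta(K)}^M) \cup i_E(E_0) \subset V \cup i_M(K)$, and similarly for $i_M(\eta'(K))$. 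Thus $\overline{V} \cap (N \setminus i_M(K)) = V$, so $V$ is a nonempty, connected, clopen subspace of $N \setminus i_M(K)$, hence a connected component.

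Finally, because $\alpha$ is extraordinary, every neighborhood of $\alpha$ meets $i_E(E_0)$; in particular $\alpha(i_M(K)) \cap i_E(E_0) \neq \varnothing$, so $\alpha(i_M(K)) \cap V \neq \varnothing$, and since distinct components are disjoint, $\alpha(i_M(K)) = V$. The main obstacle is the bookkeeping around the gluing maps $\phi_+$ and $\phi_-$: one must use properness of $\nu$ and $\nu'$ to guarantee that sufficiently narrow slabs of $E_\pm$ map into the specific components $\eta(K)$ and $\eta'(K)$, rather than landing in $K$ or in some other component of $M \setminus K$.
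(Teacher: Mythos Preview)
Your proof is correct and follows essentially the same strategy as the paper's: show that $V = i_M(\eta(K)) \cup i_M(\eta'(K)) \cup i_E(E_0)$ is a clopen connected subset of $N \setminus i_M(K)$, and then use that $\alpha$ is extraordinary to force $\alpha(i_M(K)) = V$. The only minor difference is that the paper obtains closedness by observing that the complement of $V$ in $N \setminus i_M(K)$ is the union of the open sets $i_M(U)$ over the remaining components $U$ of $M \setminus K$, whereas you compute closures directly; your more explicit use of properness of $\nu$ and $\nu'$ unpacks exactly what the paper's sentence ``every neighborhood of $i_E(E_0)$ intersects $i_M(\eta(K))$ and $i_M(\eta'(K))$'' is asserting.
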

\begin{proof}
Since $\alpha$ is extraordinary, $\alpha(i_M(K))$ must not be disjoint from $i_E(E_0)$. Thus, $\alpha(i_M(K))$ contains $i_E(E_0)$. Every neighborhood of $i_E(E_0)$ intersects $i_M(\eta(K))$ and $i_M(\eta'(K))$. Furthermore, $i_M(\eta(K)) \cup i_M(\eta'(K)) \cup i_E(E_0)$ is an open set. Its complement in $N \setminus i_M(K)$ is a union of open sets of the form $i_M(U)$, where $U$ is a component of $M \setminus K$. It follows that $i_M(\eta(K)) \cup i_M(\eta'(K)) \cup i_E(E_0)$ is a component of $N \setminus i_M(K)$, hence equals $\alpha(i_M(K))$.
\end{proof}

\begin{lemma} \label{lem:ordinaryEnds}
To every ordinary end $\alpha$ of $N$, we can assign a unique end $\beta$ of $M$ such that $i_M(\beta(K)) \subset \alpha(i_M(K))$ for all compact $K \subset M$. Furthermore, $\alpha$ and $\beta$ are isomorphic as ends, and $\beta \neq \eta, \eta'$.
\end{lemma}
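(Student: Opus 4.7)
The plan is to extract $\beta$ by pulling $\alpha$ back through $i_M$ on a tail of the exhaustion $(K_m)$ from Theorem~\ref{thm:steppingStone}, promote that pullback to an end isomorphism via closures of end-neighborhoods, and finally rule out $\beta\in\{\eta,\eta'\}$ by showing that $\eta$ and $\eta'$ get swallowed into the extraordinary component of $N\setminus i_M(K)$.

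By Lemma~\ref{lem:componentsOfComplementOrdinary}, I fix $m_0$ so that for all $m\geq m_0$ the component $\alpha(i_M(K_m))$ of $N\setminus i_M(K_m)$ is disjoint from $i_E(E_0)$. Since $N=i_M(M)\cup i_E(E_0)$ disjointly as sets (points of $i_E(E_\pm)$ already lie in $i_M(M)$ via $\phi_\pm$, whereas $E_0$ is identified with nothing in $M$), this forces $\alpha(i_M(K_m))\subset i_M(M)$. Set $V_m=i_M^{-1}(\alpha(i_M(K_m)))$. Using that $i_M$ is an open embedding and that $\alpha(i_M(K_m))$ is a maximal connected subset of $N\setminus i_M(K_m)$, a short argument shows $V_m$ is itself an entire connected component of $M\setminus K_m$. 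These components are nested, so Lemma~\ref{lem:existenceOfEnds} extends $(V_m)_{m\geq m_0}$ to an end $\beta\in\E{M}$ with $\beta(K_m)=V_m$ for $m\geq m_0$. For any compact $K\subset M$, pick $m\geq m_0$ with $K\subset K_m$: monotonicity gives $\beta(K)\supset V_m$, so $i_M(\beta(K))$ is a connected subset of $N\setminus i_M(K)$ meeting $\alpha(i_M(K_m))\subset\alpha(i_M(K))$, which forces $i_M(\beta(K))\subset\alpha(i_M(K))$. Uniqueness is then automatic: any $\beta'$ with the same containment satisfies $\beta'(K_m)\subset V_m$ for $m\geq m_0$, and since both are components of $M\setminus K_m$ they agree.

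For the end isomorphism, let $A=\closr{\alpha(i_M(K_{m_0}))}$ in $N$. Because $N$ is locally connected, distinct connected components of the open set $N\setminus i_M(K_{m_0})$ are mutually separated, so the topological boundary of $\alpha(i_M(K_{m_0}))$ in $N$ lies in $i_M(K_{m_0})$. Hence $A\subset i_M(M)$, and $i_M^{-1}$ restricts to a homeomorphism from $A$ onto $\closr{V_{m_0}}\subset M$; these are closed end-neighborhoods of $\alpha$ and $\beta$ respectively, and the homeomorphism carries $\alpha$ to $\beta$, exhibiting the isomorphism required by Definition of isomorphic ends in Section~\ref{subsec:endInvariants}.

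To rule out $\beta=\eta$ (the case $\beta=\eta'$ is symmetric), pick $K\subset M$ compact with $K\supset K_{m_0}$ and $\nu([0,1]\times D^{n-1})\cup\nu'([0,1]\times D^{n-1})\subset K$. A direct check shows $i_E(E_0)$ is path-connected in $N\setminus i_M(K)$ to both $i_M(\eta(K))$ and $i_M(\eta'(K))$ through the attached strip, so these three sets lie in a single component $W$ of $N\setminus i_M(K)$; since $W$ meets $i_E(E_0)$ but $\alpha(i_M(K))$ does not, $W\neq\alpha(i_M(K))$. Were $\beta=\eta$, then $i_M(\beta(K))=i_M(\eta(K))\subset W$ would contradict $i_M(\beta(K))\subset\alpha(i_M(K))$. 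The step I expect to be most delicate is the closure bound $A\subset i_M(M)$: it reduces to the soft but crucial fact that in a locally connected space distinct components of an open set have pairwise disjoint closures, and it is exactly where the ordinary hypothesis on $\alpha$ is used to separate $A$ from the handle $i_E(E_0)$.
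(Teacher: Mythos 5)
Your proof is correct and takes essentially the same route as the paper's: use Lemma~\ref{lem:componentsOfComplementOrdinary} to push $\alpha(i_M(K_m))$ into $i_M(M)$, pull back components through the open embedding $i_M$ to define $\beta$, and note that $i_M(\beta(K))=\alpha(i_M(K))$ for all large $K$ gives the containment, uniqueness, and the end isomorphism. Two cosmetic remarks: promoting the nested tail $(V_m)_{m\ge m_0}$ to an end is really an application of the correspondence between $\E{M}$ and $\mathcal{E}'(M)$ rather than of Lemma~\ref{lem:existenceOfEnds} (which extends finite initial segments), and your explicit argument that $\beta\neq\eta,\eta'$ supplies a detail the paper's proof leaves implicit.
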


\begin{proof}
By Lemma~\ref{lem:componentsOfComplementOrdinary}, $\alpha(i_M(K_m))$ is disjoint from $i_E(E_0)$ for some $m$. For any compact $A \subset M$, consider $L_A = i_M(A \cup K_m)$. This is a compact subset of $N$, and $\alpha(L_A)$ is disjoint from $i_E(E_0)$. Define $\beta(A)$ to be the unique connected component of $M \setminus A$ containing $i_M^{-1}(\alpha(L_A))$. Note that $\beta$ is a well-defined end of $M$.
So, $i_M(\beta(A))$ is the connected component of $i_M(M) \setminus i_M(A)$ containing $\alpha(L_A)$, and $\alpha(i_M(A))$ is the connected component of $N \setminus i_M(A)$ containing $\alpha(L_A)$. Thus, $i_M(\beta(A)) \subset \alpha(i_M(A))$.

Suppose $\beta'$ is another end satisfying that condition. For large enough compact $K \subset M$, $\alpha(i_M(K))$ is disjoint from $i_E(E_0)$, so $i_M(\beta(K)) = \alpha(i_M(K)) = i_M(\beta'(K))$. Thus, $\beta' = \beta$.

For large enough compact $K \subset M$, $i_M(\beta(K)) = \alpha(i_M(K))$. Thus, $\alpha$ and $\beta$ are isomorphic as ends.
\end{proof}

When $\alpha$ is an ordinary end of $N$, let $\eh(\alpha)$ denote the associated end of $M$, as given by Lemma~\ref{lem:ordinaryEnds}. Note that with this definition, $\eh$ is a function from the ordinary ends of $N$ to $\E{M} \setminus \{\eta, \eta'\}$. Extend $\eh$ to a map from $\E{N}$ to $\E{M} / (\eta \equiv \eta')$ by setting $\eh(\alpha) = \eta \equiv \eta'$ whenever $\alpha$ is an extraordinary end of $N$. By $\E{M} / (\eta \equiv \eta')$, we mean the topological quotient of $\E{M}$ by the equivalence relation $\equiv$ generated by $\eta \equiv \eta'$. This quotient map is closed (since it is a continuous function from a compact space to a Hausdorff space), but is not necessarily open (consider two copies of the surface $M$ above in Figure~\ref{fig:oddex1} and end sum along their nonisolated ends). It will be shown in Lemma~\ref{lem:partEndSumEnds} that in many cases, this $\eh$ is also a homeomorphism, completely describing the end space of $N$.

%To proceed further in our investigation of $\eh$, we shall need the following general lemma.
%
%\vspace{0.3in}
%*Deprecated $\downarrow$
%\begin{lemma} \label{lem:disjointNeighborhoodOfEnd}
%	Let $X$ be nice for ends and let $A \subset X$ be closed and connected. Let $i: A \rightarrow X$ be the inclusion map. Recall that $i$ induces a map $\E{i}: \E{A} \rightarrow \E{X}$. If $\sigma \in \E{X} \setminus \image{\E{i}}$, then $\sigma$ has a neighborhood disjoint from $A$.
%\end{lemma}
%\begin{proof}
%As $\E{A}$ is compact and $\E{i}$ is continuous, $\image{\E{i}}$ is closed in $\E{X}$. Thus, there exists a basic end-space neighborhood $\enbd{\sigma}{K} \subset \E{X}$ disjoint from $\image{\E{i}}$, where $K \subset X$ is compact. Let $V = \sigma(K)$. By construction of the end-space map (see Definition~\ref{def:eMap}), we can deduce that $V \cap A$ is disjoint from $\alpha(K \cap A)$ for all $\alpha \in \E{A}$. By Corollary~\ref{cor:existenceOfEnds2}, this implies that $V \cap A$ is contained in the bounded components of $A \setminus K$. By Lemma~\ref{lem:unboundedComponents}, the union of all bounded components of $A \setminus K$ is bounded. Thus, $V \cap A$ is bounded.
%
%Let $L = K \cup \closr{V \cap A}$, and let $U = \sigma(L)$. Since $K \subset L$, we have $U \subset V$. Since $V \cap A \subset L$, $U$ is disjoint from $V \cap A$. Thus, $U$ is disjoint from $A$.
%\end{proof}
%
%
%We can now show the desired properties for the map $\eh$.

\begin{lemma} \label{lem:partEndSumEnds}
The canonical map $\eh: \E{N} \rightarrow \E{M} / (\eta \equiv \eta')$ is continuous. If $n \geq 2$, then $h$ is surjective. If (i) $n = 2$, $\bd{M}$ is compact, and $\eta \neq \eta'$, or (ii) $n \geq 3$, then $\eh$ is a homeomorphism.
\end{lemma}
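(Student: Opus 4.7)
The plan is to establish continuity, surjectivity, and injectivity of $\eh$ separately, and then conclude by observing that a continuous bijection from the compact space $\E{N}$ to the Hausdorff quotient $\E{M}/(\eta\equiv\eta')$ is automatically a homeomorphism.

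For continuity at $\alpha\in\E{N}$ I would produce a basic neighborhood of $\alpha$ mapping into a given basic neighborhood of $\eh(\alpha)$. When $\alpha$ is ordinary with $\eh(\alpha)=\beta$, Lemma~\ref{lem:componentsOfComplementOrdinary} provides a compact $K\subset M$ for which $\alpha(i_M(K))=i_M(\beta(K))$ is disjoint from $i_E(E_0)$, and it follows that every $\alpha'\in\enbd{\alpha}{i_M(K)}$ is likewise ordinary with $\eh(\alpha')\in\enbd{\beta}{K}$. When $\alpha$ is extraordinary, Lemma~\ref{lem:componentsOfComplementExtraordinary} identifies $\alpha(i_M(K))$ with $i_M(\eta(K))\cup i_M(\eta'(K))\cup i_E(E_0)$, and the same sort of chase shows $\enbd{\alpha}{i_M(K)}$ maps into $q\bigl(\enbd{\eta}{K}\cup\enbd{\eta'}{K}\bigr)$, a basic neighborhood of $[\eta]$ in the quotient.

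For surjectivity when $n\ge 2$, given $\beta\in\E{M}$ with $\beta\ne\eta,\eta'$, I would invoke Theorem~\ref{thm:steppingStone} to get an exhaustion $(K_m)$ and pick $m$ so that $\beta(K_m)$ is distinct from $\eta(K_m)$ and $\eta'(K_m)$. Because $K_m$ swallows the initial segments of both tubular neighborhoods, $\beta(K_m)$ is disjoint from $\image\,\nu\cup\image\,\nu'$ outside $K_m$, so $i_M(\beta(K_m))$ has its boundary in $N$ contained in $L_m$ and is therefore a clopen subset---hence a component---of $N\setminus L_m$; unboundedness is immediate from $\beta(K_{m'})\ne\varnothing$ for all $m'\ge m$, and Corollary~\ref{cor:existenceOfEnds2} produces an ordinary $\alpha$ with $\eh(\alpha)=\beta$. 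To hit $[\eta]$, note that $i_E(E_0)$ is closed in $N$ (its complement is the open set $i_M(M)$) and $E_0\setminus R_m$ is unbounded in $E_0$, so some component of $N\setminus L_m$ meets $i_E(E_0\setminus R_m)$; Lemma~\ref{lem:existenceOfEnds} extends that choice to an extraordinary end.

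Injectivity on the ordinary ends is the uniqueness clause of Lemma~\ref{lem:ordinaryEnds}, and an ordinary end is never sent to $[\eta]$, so everything reduces to showing $N$ has a unique extraordinary end. When $n\ge 3$, $E_0\setminus R_m=\R^{n-1}\setminus[-m,m]^{n-1}$ is connected, hence sits inside a single component $V_m$ of $N\setminus L_m$, and every extraordinary $\alpha$ must satisfy $\alpha(L_m)=V_m$, forcing uniqueness. The main obstacle is the case $n=2$, $\eta\ne\eta'$, $\bd{M}$ compact, where $i_E(E_0\setminus R_m)$ is two disjoint rays which could a priori lie in different components of $N\setminus L_m$. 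Here I plan to use the alternative realization of end sum sketched at the end of Section~\ref{sec:endSumIntroduction}: realize $N$ as $M\setminus\intr(A\cup A')$ with the resulting boundary lines $\bd{A}\cong\R$ and $\bd{A}'\cong\R$ identified by an orientation-reversing homeomorphism, so that $i_E(E_0)$ corresponds to the identified line and its two ends to the two ends of that line. The geometric input is that when $\bd{M}$ is compact the end $\eta$ of $M$ persists as a single end of $M\setminus\intr{A}$ (the property that fails in examples like $M=\R^2_+\sqcup\R^2_+$ with noncompact boundary), so both far portions of $\bd{A}$ lie in the single connected $\eta$-component of $(M\setminus\intr{A})\setminus L$ for any compact $L\subset N$; mirroring on the $\eta'$ side and identifying $\bd{A}$ with $\bd{A}'$ assembles a connected subset of $N\setminus L$ meeting both ends of $i_E(E_0)$, which gives uniqueness.
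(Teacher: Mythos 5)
Your continuity and surjectivity arguments, and the $n\ge 3$ case, are correct and essentially the same as the paper's (the paper likewise reduces everything to showing $N$ has exactly one extraordinary end and then uses compactness of $\E{N}$). The gap is in the $n=2$ case, which is the heart of the lemma. First, your ``geometric input''---that when $\bd{M}$ is compact the end $\eta$ does not split upon removing $\intr{A}$---is asserted rather than proved; it is true (it is essentially the paper's later Lemma~\ref{lem:surfaceMinusRayEndsDontChange}, proved by an independent frontier-tracing argument, so no circularity), but it is not obvious and is exactly the kind of statement this lemma is responsible for. Second, and more seriously, the assembly step is not valid as described: you establish that both far portions of $\bd{A}$ lie in one component of $(M \setminus \intr{A}) \setminus L$, but the surface sitting inside $N$ is $M \setminus \intr{(A \cup A')}$, and the connecting component you produce on the $\eta$ side may run through $A'$; after $\intr{A'}$ is deleted it can disconnect, separating the two ends of $i_E(E_0)$. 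Symptomatically, your argument never uses the hypothesis $\eta \neq \eta'$, yet the conclusion is false without it: for the one-ended Loch Ness monster (or the planar example in the introduction), attaching the handle along two parallel rays pointing to the same end produces \emph{two} extraordinary ends, even though $\bd{M}$ is compact, and your ``mirroring'' argument would apply verbatim there.

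The repair is to use $\eta \neq \eta'$ to keep $A'$ away from the $\eta$-side connectivity: choose a compact $K$ with $\eta(K) \neq \eta'(K)$ and large enough that $A' \setminus K \subset \eta'(K)$, and then show both far portions of $\bd{A}$ lie in a single component of $\pa{M \setminus \intr{(A\cup A')}} \setminus L$, for instance by running your persistence argument inside $\closr{\eta(K)}$, which is disjoint from $A'$. This is where the paper's own proof spends its effort: it traces along $\bd{(i_E(R_m))}$ until it meets a boundary circle $T$ of $K_m$ (compactness of $\bd{M}$ guarantees $T$ is a circle one can follow around in the complement), and the possibility that the path returns to the same side is excluded precisely because $T$ would then bound both $\eta(K_m)$ and $\eta'(K_m)$, contradicting $\eta(K_m) \neq \eta'(K_m)$. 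So your alternative route through the ``remove two half-planes and glue the boundary lines'' model can be made to work, but only after supplying a proof of the persistence claim and an explicit use of the distinct-ends hypothesis in the assembly.
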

\begin{proof}
First, we prove that $h$ is continuous. Let $\alpha$ be an end of $N$, and let $\beta = \eh(\alpha)$. Let $U$ be an open neighborhood of $\beta$ in $\E{M}/(\eta \equiv \eta')$. We will show that $\eh(V) \subset U$ for some open neighborhood $V$ of $\alpha$.
If $\alpha$ is ordinary, then we may take $U = \enbd{\beta}{K}$ for some compact $K \subset M$, and by Lemma~\ref{lem:componentsOfComplementOrdinary}, we can assume that $\alpha(i_M(K))$ is disjoint from $i_E(E_0)$. For every end $\tau \in \enbd{\alpha}{i_M(K)}$, $\tau(K)$ is disjoint from $i_E(E_0)$, so $\tau$ is ordinary. In fact, $\eh(\tau)(K) = \beta(K)$, so $\eh(\tau) \in U$.
If $\alpha$ is extraordinary, then we may take $U = \enbd{\eta}{K} \cup \enbd{\eta'}{K}$ for some compact $K \subset M$. For every extraordinary end $\tau \in \enbd{\alpha}{i_M(K)}$, $\eh(\tau) = \beta \in U$. For every ordinary end $\tau \in \enbd{\alpha}{i_M(K)}$,
\[
	i_M(\eh(\tau)(K)) \subset \tau(i_M(K)) = \alpha(i_M(K)) = i_M(\eta(K)) \cup i_M(\eta'(K)) \cup i_E(E_0)
\]
The last equality is true by Lemma~\ref{lem:componentsOfComplementExtraordinary}. Thus, $\eh(\tau) = \eta(K)$ or $\eta'(K)$. So, $\eh(\tau) \in U$. Thus, $\eh$ is continuous.

In the other direction, let $\beta \neq \eta, \eta'$ be an end of $M$. Pick compact $K \subset M$ such that $\beta(K) \neq \eta(K), \eta'(K)$. Note that $i_M(\beta(K))$ is also a connected component of $N \setminus i_M(K)$. Given compact $L \subset N$, set $L' = K \cup (\beta(K) \cap i_M^{-1}(L))$. Note that $L'$ is a compact subset of $M$. Define $\alpha(L)$ as the unique component of $N \setminus L$ containing $i_M(\beta(L'))$. Then, $\alpha$ is the unique end of $M$ such that $\eh(\alpha) = \beta$.

The map $\eh$ is surjective provided there is at least one extraordinary end of $N$. Furthermore, as a continuous map between compact spaces, $\eh$ is a homeomorphism as long as it is bijective, that is, as long as there is one extraordinary end of $N$. An end $\alpha$ of $N$ is extraordinary provided that $\alpha(L)$ intersects $i_E(E_0)$ for all compact $L \subset N$. If $n \geq 2$, then $i_E(E_0)$ is noncompact, so it has at least one end $\rho$. Let $\alpha(L)$ be the unique component of $N \setminus L$ containing $\rho(i_E(E_0) \cap L)$. Then $\alpha$ is an extraordinary end of $N$.

If $n \geq 3$, then $i_E(E_0) \cong \R^{n-1}$ has a single end $\rho$, and for any extraordinary end $\zeta$, $\zeta(L) \supset \rho(i_E(E_0) \cap L)$. Thus, $N$ has exactly one extraordinary end.

If $n = 2$, then $i_E(E_0) \cong \R$ has two ends, $\rho_-$ and $\rho_+$. For any extraordinary end $\zeta$ of $N$, $\zeta(L)$ contains at least one of $\rho_-(i_E(E_0) \cap L)$ or $\rho_+(i_E(E_0) \cap L)$. Thus, $N$ has at most two extraordinary ends, and $N$ has a single extraordinary end precisely when $\rho_-(i_E(E_0) \cap L)$ and $\rho_+(i_E(E_0) \cap L)$ are always contained in the same connected component of $N \setminus L$.

Suppose that $n = 2$, $\eta \neq \eta'$, and $\bd{M}$ is compact.
It suffices to prove that $\rho_+(i_E(E_0) \cap L_m)$ and $\rho_-(i_E(E_0) \cap L_m)$ are connected in $N \setminus L_m$ for large enough $m$.
Starting at $\rho_+(i_E(E_0) \cap L_m)$, trace along the boundary of $i_E(R_m)$ until it intersects $i_M(K_m)$.
Let $T$ be the boundary component of $K_m$ that we intersect.
Since $\bd{M}$ is compact, $\bd{M} \subset K_m$ for large enough $m$.
Thus, staying within $N \setminus L_m$, we can traverse $i_M(T)$ until we again intersect with $i_E(R_m)$.
Traverse the boundary of $i_E(R_m)$ until we return to $\rho_+(i_E(E_0) \cap L_m)$ or arrive at $\rho_-(i_E(E_0) \cap L_m)$.
If we return to $\rho_+(i_E(E_0) \cap L_m)$, then $T$ must be a boundary component of both $\eta(K_m)$ and $\eta'(K_m)$.
Since $\eta \neq \eta'$, $\eta(K_m) \neq \eta'(K_m)$ for large enough $m$, in which case this situation is impossible.
We have traced a path from $\rho_+(i_E(E_0) \cap L_m)$ to $\rho_-(i_E(E_0) \cap L_m)$ within $N \setminus L_m$.
Thus, they are connected in $N \setminus L_m$, and $N$ has exactly one extraordinary end.
\end{proof}

In the future, whenever there is a unique extraordinary end of $N$, we will call it $\zeta$. While the ordinary ends of $N$ are isomorphic to their corresponding ends in $M$ and easy to classify, understanding $\zeta$ takes more effort.

\subsection{Boundary} \label{subsec:endSumBoundary}

In this section, we briefly consider the boundary of $N$.

\begin{theorem} \label{thm:boundaryOfTheEndSum}
There is a canonical isomorphism $\bd{N} \cong \bd{M}$.
\end{theorem}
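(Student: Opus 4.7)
The plan is to show that the embedding $i_M\colon M \to N$ restricts to a \CAT-isomorphism $\bd M \to \bd N$, which will supply the canonical identification.

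First I would record the basic geography of the construction. By Conventions~\ref{keyconventions}, the rays $r,r'$ and their tubular neighborhood maps $\nu,\nu'$ all have image in the manifold interior of $M$, so the open sets $U = \phi_+(E_+)$ and $U' = \phi_-(E_-)$ along which $E$ is glued are contained in $M \setminus \bd M$. In particular $\bd M \cap (U \cup U') = \varnothing$, and no point of $\bd M$ is identified under $\sim$ with any other point in the quotient forming $N$. Consequently $i_M$ is injective on $\bd M$.

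Next I would verify the set equality $i_M(\bd M) = \bd N$. For the forward inclusion, given $p \in \bd M$ I would choose a \CAT\ half-space chart $\phi\colon V \to \R^n_+$ of $M$ around $p$ with $V \cap (U \cup U') = \varnothing$, which is possible because $U \cup U'$ is an open subset of $M$ not containing $p$. Since the gluing is trivial on $V$, Lemma~\ref{lem:endSumIsManifold} (together with the definition of the \CAT\ structure on $N$) implies that $\phi \circ (i_M|_V)^{-1}$ is a \CAT\ half-space chart for $N$ at $i_M(p)$, whence $i_M(p) \in \bd N$. For the reverse inclusion, take any $q \in \bd N$. Since $E = \R^n$ has empty manifold boundary and $i_E$ is an open \CAT\ embedding by Lemma~\ref{lem:endSumIsManifold}, every point of $i_E(E)$ is a manifold interior point of $N$; hence $q \notin i_E(E)$. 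Thus $q = i_M(p)$ for some $p \in M$, and the same chart argument applied away from $U \cup U'$ shows that if $p$ lay in the manifold interior of $M$ then $q$ would lie in the manifold interior of $N$, contradicting $q \in \bd N$. Therefore $p \in \bd M$.

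Finally, since $i_M$ is a \CAT\ open embedding by Lemma~\ref{lem:endSumIsManifold}, its restriction to $\bd M$ is a continuous \CAT\ injection onto $\bd N$, and the chart analysis above shows this restriction is a local \CAT\ isomorphism at every boundary point. Hence $i_M|_{\bd M}\colon \bd M \to \bd N$ is a \CAT\ isomorphism, providing the canonical identification. I do not expect any real obstacle here: the whole content of the statement is that the 1-handle $E$ and its attaching region both sit inside the manifold interior of $M$, so $\bd M$ is literally untouched by the construction; the ``proof'' is essentially bookkeeping to confirm that the \CAT\ structure on $N$ inherited through $i_M$ agrees on boundary charts.
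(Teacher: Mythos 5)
Your proposal is correct and follows essentially the same route as the paper: since $i_M$ and $i_E$ are open embeddings (Lemma~\ref{lem:endSumIsManifold}), $\bd{N}=i_M(\bd{M})\cup i_E(\bd{E})$, and $\bd{E}=\varnothing$ gives $\bd{N}=i_M(\bd{M})$. Your chart-by-chart verification is just a more detailed spelling out of this same observation.
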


\begin{proof}
As $i_M$ and $i_E$ are open embeddings, $\bd{N} = i_M(\bd{M}) \cup i_E(\bd{E})$. As $E$ has no boundary components, $\bd{N} = i_M(\bd{M})$.
\end{proof}

\subsection{Orientability} \label{subsec:endSumOrientation}

In this section, we tackle the orientability of $N$.
The orientability of $N$ depends on the orientability of $M$, which ends are used for the 1-handle, and the orientation of the 1-handle.
For examples, see Figures~\ref{fig:r2exs} and~\ref{fig:annulusbasic} in Section~\ref{sec:introduction} above.
To prove that $N$ is orientable, it suffices to exhibit an orientation on $N$.
To prove that $N$ is non-orientable, we will use the notion of an orientation-reversing loop.

\begin{definition} \label{def:orientationPreservingLoop}
A loop $\gamma$ in a manifold $M$ is \defword{orientation-preserving} if $\gamma$ lifts to a loop in the oriented double-cover of $M$. Define two paths $\Gamma_1$ and $\Gamma_2$ in $M$ with the same endpoints to have the \defword{same effect on orientation} if the concatenation of $\Gamma_1$ by the reverse of $\Gamma_2$ is an orientation-preserving loop.
\end{definition}

For a description of the oriented double-cover of a manifold, see \cite[pp.~233--235]{hatcherbook}. As a homotopy of a loop in the base space lifts to a homotopy in the covering space, two homotopic loops are either both orientation-preserving or both orientation-reversing. Importantly for us, every loop in $M$ is orientation-preserving if and only if $M$ is orientable.

\begin{theorem} \label{thm:endSumOrientability}
If $M$ is connected, then $N$ is orientable if and only if $M$ is orientable and the 1-handle is oriented.
If $M = M_1 \cup M_2$ where each $M_i$ is a connected component of $M$, $\eta$ is an end of $M_1$ and $\eta'$ is an end of $M_2$, then $N$ is orientable if and only if both $M_1$ and $M_2$ are orientable.
\end{theorem}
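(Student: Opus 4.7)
The plan is to split into the connected and disconnected cases, using the common easy observation that since $M$ embeds as an open submanifold of $N$ via $i_M$, orientability of $N$ forces orientability of every connected component of $M$. This already handles one direction of each implication.

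For the disconnected case $M = M_1 \sqcup M_2$, the converse is straightforward. I would fix any orientation on $M_1$, push it forward through $\phi_+$ to an orientation on $E$, and then choose the (unique) orientation on $M_2$ making $\phi_-$ orientation-preserving with respect to it. This adjustment is available precisely because $M_2$ is a separate component, so reversing its orientation does not disturb that of $M_1$. With these compatible choices the $1$-handle is oriented in the sense of Section~\ref{sec:endSumIntroduction}, and Lemma~\ref{lem:endSumIsManifold} directly supplies an orientation on $N$.

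For the connected case, the direction ``$M$ orientable and $1$-handle oriented $\Rightarrow$ $N$ orientable'' is again immediate from Lemma~\ref{lem:endSumIsManifold}. The remaining content is: if $M$ is connected and orientable but the $1$-handle is not oriented, then $N$ is non-orientable. I plan to prove this by exhibiting an orientation-reversing loop $\gamma$ in $N$ via Definition~\ref{def:orientationPreservingLoop}. Concretely, fix an orientation of $M$ and use $\phi_+$ to transport it to an orientation $\omega$ of $E$; the hypothesis that the $1$-handle is not oriented then forces $\phi_-$ to be orientation-reversing with respect to $\omega$. The loop $\gamma$ will travel along $i_E(E)$ from $\phi_+(E_+)$ across $i_E(E_0)$ into $\phi_-(E_-)$, exit into $i_M(M)$, and---using connectedness of $M$---proceed within $i_M(M)$ back to $\phi_+(E_+)$ to close up.

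The main obstacle will be this last step: setting up the orientation-reversing loop precisely and confirming it is orientation-reversing. The verification reduces to comparing how an oriented local frame transports along the ``handle route'' versus the ``$M$-route,'' and the crucial input is that connectedness of $M$ couples the orientations on the two sides of the $1$-handle, so the discrepancy built in by a non-oriented gluing cannot be cancelled by a reorientation choice---in contrast to the disconnected case, where the two sides can be independently reoriented.
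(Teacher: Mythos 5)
Your proposal is correct and follows essentially the same route as the paper: orientability of $N$ forces orientability of the components of $M$ via the open embedding $i_M$; in the disconnected case one reorients $M_2$ independently so the $1$-handle becomes oriented and Lemma~\ref{lem:endSumIsManifold} applies; and in the connected, non-oriented-handle case one fixes an orientation of $M$, arranges $\phi_+$ orientation-preserving and $\phi_-$ orientation-reversing, and produces an orientation-reversing loop by concatenating a path through $i_E(E)$ with a return path in $i_M(M)$ (using connectedness of $M$). The paper's proof is exactly this, so no changes are needed.
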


\begin{proof}
First consider the case where $M$ is connected.
Suppose that $M$ is non-orientable.
As $i_M(M)$ is an open subset of $N$, $N$ is also non-orientable.
Next, suppose that $M$ is orientable. Fix an orientation of $M$. If the 1-handle is oriented, then by Lemma~\ref{lem:endSumIsManifold}, $N$ is orientable. If the 1-handle is not oriented, then we may assume without loss of generality that $\phi_+$ is orientation-preserving, and $\phi_-$ is orientation-reversing. Fix $a \in i_E(E_-)$ and $b \in i_E(E_+)$. Choose a path within $i_E(E)$ from $a$ to $b$ and a path within $i_M(M)$ from $b$ to $a$. Concatenating these paths gives an orientation-reversing loop. Thus, $N$ is non-orientable.

Second, consider the case where $M = M_1 \cup M_2$ where each $M_i$ is a connected component of $M$, $\eta$ is an end of $M_1$, and $\eta'$ is an end of $M_2$.
Suppose that $M_1$ is non-orientable. As $i_M(M_1)$ is an open subset of $N$, $N$ is also non-orientable. Similarly, if $M_2$ is non-orientable, then so is $N$.
Next, suppose that both $M_1$ and $M_2$ are orientable. By reversing the orientations on $M_1$ and $M_2$ if needed, we may assume that the 1-handle is oriented. By Lemma~\ref{lem:endSumIsManifold}, $N$ is orientable.
\end{proof}

\subsection{Orientability of the Ends} \label{subsec:endSumEndOrientation}

Thus far in Section~\ref{sec:mainTheorem}, we have given general results on 1-handles and end-sum for $n$-manifolds.
We now narrow our focus to the conditions of Theorem~\ref{thm:endSumUniquenessPLCase}.
From here on, we assume that $M$ is a surface with compact boundary and $\eta \neq \eta'$.
In particular, Lemma~\ref{lem:partEndSumEnds} implies that $N$ has a unique extraordinary end $\zeta$.

In this subsection, we study the orientability of the ends of $N$.
Recall that we define an end of $M$ to be orientable provided that it has an orientable neighborhood, and the set of orientable ends is open in the space of all ends.

\begin{theorem} \label{thm:endSumEndOrientations}
For every ordinary end $\alpha$ of $N$, $\eh(\alpha)$ has the same orientability as $\alpha$.
The extraordinary end $\zeta$ of $N$ is orientable if and only if both $\eta$ and $\eta'$ are orientable.
\end{theorem}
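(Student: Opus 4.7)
The plan is to handle the ordinary and extraordinary ends separately, with most of the work falling on the reverse direction of the extraordinary case.

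For an ordinary end $\alpha$ of $N$, Lemma~\ref{lem:ordinaryEnds} already states that $\alpha$ and $\eh(\alpha)$ are isomorphic as ends, i.e.\ there are closed neighborhoods $A$ of $\alpha$ in $N$ and $A'$ of $\eh(\alpha)$ in $M$ together with a homeomorphism $A \to A'$ matching the ends. Orientability of an end (Definition~\ref{def:endOrientable}) depends only on the existence of a neighborhood with orientable interior, so it is preserved under end-isomorphism, and the first statement follows immediately.

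For the extraordinary end $\zeta$, the forward direction is straightforward. Suppose $\zeta$ has a closed neighborhood $A$ with $\intr{A}$ orientable, and choose compact $K \subset M$ such that $\zeta(i_M(K)) \subset \intr{A}$. By Lemma~\ref{lem:componentsOfComplementExtraordinary}, $i_M(\eta(K)) \subset \zeta(i_M(K)) \subset \intr{A}$, so $i_M(\eta(K))$ is an open subset of an orientable manifold and is therefore orientable. Since $i_M$ is an open embedding (Lemma~\ref{lem:endSumIsManifold}), $\eta(K)$ is an orientable neighborhood of $\eta$, so $\eta$ is orientable; the same argument applies to $\eta'$.

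The reverse direction is the main step. Assume both $\eta$ and $\eta'$ are orientable. Using Theorem~\ref{thm:steppingStone}, choose a compact $K \subset M$ large enough that (a) $\eta(K)$ and $\eta'(K)$ are each orientable, and (b) $\nu^{-1}(K) = [0,a]\times D^{n-1}$ and ${\nu'}^{-1}(K) = [0,a']\times D^{n-1}$ so that the portions of $\image{\nu}$ and $\image{\nu'}$ outside $K$ are standard connected tails. Consider the open set
\[
W \;=\; i_M(\eta(K)) \cup i_M(\eta'(K)) \cup i_E(E) \subset N.
\]
By Lemma~\ref{lem:componentsOfComplementExtraordinary}, $\zeta(i_M(K)) \subset W$, so $W$ is an open neighborhood of $\zeta$, and it suffices to orient $W$ coherently. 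Fix an arbitrary orientation on $\eta(K)$ and transport it to $i_M(\eta(K))$. The overlap $i_M(\eta(K)) \cap i_E(E)$ corresponds under $\phi_+$ to a connected subset of $E_+$, so the chosen orientation determines an orientation on this subset, which extends uniquely to an orientation on $E \cong \R^n$. That orientation restricts via $\phi_-$ to an orientation on the (connected) overlap $i_M(\eta'(K)) \cap i_E(E)$. Since $\eta'(K)$ is connected and orientable, it admits exactly two orientations; pick the one that agrees on this overlap. These three pieces then assemble into a coherent orientation on $W$, so $\zeta$ is orientable.

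The main obstacle is the compatibility of orientations in the last paragraph, but it is not deep: it rests on the fact that $\eta(K)$ and $\eta'(K)$ are disjoint components of $M \setminus K$ and hence impose no direct constraint on one another, and on the connectedness of each overlap, which lets an orientation on one piece determine a unique candidate orientation on its neighbor. Notably, the orientability of $\zeta$ does not require the $1$-handle to be oriented in the sense of Section~\ref{sec:endSumIntroduction}, in contrast with the orientability of $N$ itself (Theorem~\ref{thm:endSumOrientability}).
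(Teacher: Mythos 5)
Your treatment of ordinary ends and your reverse direction (both $\eta$ and $\eta'$ orientable $\Rightarrow$ $\zeta$ orientable) are fine and essentially match the paper's proof. The genuine gap is in your forward direction, in the step ``choose compact $K \subset M$ such that $\zeta(i_M(K)) \subset \intr{A}$.'' No such $K$ need exist: by Lemma~\ref{lem:componentsOfComplementExtraordinary}, $\zeta(i_M(K)) = i_M(\eta(K)) \cup i_M(\eta'(K)) \cup i_E(E_0)$ contains \emph{all} of $i_E(E_0)$ for every compact $K \subset M$, whereas a closed neighborhood $A$ of $\zeta$ is only required to contain $\zeta(L)$ for some compact $L \subset N$, and $L$ may meet $i_E(E_0)$. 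For instance $A = \closr{\zeta(L_m)}$ is a closed neighborhood of $\zeta$ that misses the central portion of $i_E(E_0)$ lying in the interior of $i_E(R_m)$, so $\zeta(i_M(K)) \not\subset \intr{A}$ for every $K$. The underlying difficulty is that neighborhoods of $\zeta$ in $N$ never contain the $i_M$-image of a full neighborhood $\eta(K)$ of $\eta$: the far end of the tube $\image{\nu}$ is absorbed into the handle region $i_E(R_m)$, so from an orientable neighborhood of $\zeta$ you only learn that $\eta(K_m)$ \emph{minus a strip} (roughly $\eta(K_m) \setminus \phi_+(R_m)$) is orientable, not that $\eta(K_m)$ itself is.

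Bridging that gap is exactly the nontrivial content of the paper's proof of this direction, which argues the contrapositive: if $\eta$ is non-orientable, an orientation-reversing loop $\Gamma_m$ in $\eta(K_m)$ is rerouted along $\bd{R_m}$ and $\bd{K_m}$ so as to avoid $\phi_+(R_m)$ without changing its effect on orientation, showing $\eta(K_m)\setminus\phi_+(R_m)$, hence $\zeta(L_m)$, is non-orientable for all large $m$, and therefore every closed neighborhood of $\zeta$ has non-orientable interior. Your argument needs this loop-modification step (or some equivalent statement that deleting a regular neighborhood of a ray preserves non-orientability near the end); as written, the forward implication does not go through.
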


\begin{proof}
From Theorem~\ref{lem:ordinaryEnds}, we know that for every ordinary end $\alpha$ of $N$,
$\eh(\alpha)$ is isomorphic to $\alpha$ as ends.
Hence, $\eh(\alpha)$ and $\alpha$ have the same orientability.

Suppose first that $\eta$ is non-orientable. Let $m$ be large enough so that $\eta(K_m) \neq \eta'(K_m)$.
Let $\Gamma_m$ be an orientation-reversing loop in $\eta(K_m)$. From the construction of $R_m$ and $K_m$, we know that $K_m$ splits $R_m$ in two.
We may homotope any segment of $\Gamma_m$ that intersects (but does not cross) $R_m$ to one that is disjoint from $R_m$. We may replace any segment of $\Gamma_m$ that crosses $R_m$ by a path that follows $\bd{R_m}$ until it meets $\bd{K_m}$, then follows $\bd{K_m}$ until it meets $\bd{R_m}$ on the other side, then follows back up $\bd{R_m}$. Note that this path has the same effect on orientation as the original segment of $\Gamma_m$.
The modified loop created in this way will still be orientation-reversing, but will also be disjoint from $\phi_+(R_m)$.
Thus, $\eta(K_m) \setminus \phi_+(R_m)$ is non-orientable.
Thus, $\zeta(L_m)$ is non-orientable.
Since this is true for sufficiently large $m$, $\zeta$ is non-orientable.

Now suppose that both $\eta$ and $\eta'$ are orientable.
We can pick $m$ large enough that $\eta(K_m)$, $\eta'(K_m)$ are distinct and orientable.
Choose orientations on $\eta(K_m)$ and $\eta'(K_m)$ that are compatible with the 1-handle. These orientations define an orientation on $\zeta(i_M(K_m))$. Thus, $\zeta$ is orientable.
\end{proof}

\subsection{Genus-Related Properties} \label{subsec:genus}

Lastly, we study the genus-related properties of $N$. Specifically, we determine the genus of $N$, the parity of $N$ (if all ends are orientable), and which ends of $N$ have infinite genus.

\subsubsection{Genus and Parity} \label{subsubsec:endSumGenus}

Recall that we use $g(M)$ to denote the genus of $M$
and $\parity{M}$ to denote the parity of $M$ (as an element of $\mathbb{Z}/2\mathbb{Z}$).

\begin{theorem} \label{thm:endSumGenus}
	If $\eta$ and $\eta'$ lie in distinct components of $M$, then $g(N) = g(M)$.
	If $\eta$ and $\eta'$ lie in the same component of $M$, then $g(N) = g(M) + 1$.
	If $M$ is orientable outside of a compact set, then $N$ is orientable outside of a compact set and $\parity{N} = \parity{M}$.
\end{theorem}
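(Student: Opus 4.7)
The plan is to track the invariants $(\piz{\cdot},b,\chi)$ of $L_m$ against those of $K_m$, combine them using the compact-surface formula $g(S)=\piz{S}-(b(S)+\chi(S))/2$ from Definition~\ref{def:finitegenus}, and pass to the supremum via Definition~\ref{def:genus}. The parity statement will reduce modulo~$2$ to the genus computation, and the orientability-at-infinity assertion will follow from Theorem~\ref{thm:endSumEndOrientations} together with the compactness of $\E{N}$ (Lemma~\ref{lem:endSpaceProperties}).

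For $\chi(L_m)$ I apply inclusion--exclusion to $L_m=i_M(K_m)\cup i_E(R_m)$. Since $i_M$ and $i_E$ are open embeddings (Lemma~\ref{lem:endSumIsManifold}), $\chi(i_M(K_m))=\chi(K_m)$ and $\chi(i_E(R_m))=1$. Unpacking the identification, the intersection equals
\[
	i_E\bigl(\{b\in R_m\cap E_+:\phi_+(b)\in K_m\}\cup\{b\in R_m\cap E_-:\phi_-(b)\in K_m\}\bigr),
\]
and Theorem~\ref{thm:steppingStone} (giving $\nu^{-1}(K_m)=[0,a_m]\times D^{n-1}$ and the analogue for $\nu'$) makes each summand a closed sub-rectangle of $R_m$ for $m$ large; the two sub-rectangles are disjoint and contractible, so the intersection has Euler characteristic $2$, giving $\chi(L_m)=\chi(K_m)-1$.

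Next, refine $(K_m)$ using Theorem~\ref{thm:steppingStone} and Lemma~\ref{lem:unboundedComponents} so that $\bd M\subset K_m$, each unbounded component of $M\setminus K_m$ meets $\bd K_m$ in a single circle, and $\eta$ and $\eta'$ lie in distinct end-boundary circles of $K_m$ (possible since $\eta\ne\eta'$ for $m$ large). Attaching the $1$-handle at infinity joins exactly those two circles into a single circle of $\bd L_m$, leaves every other boundary circle of $K_m$ untouched, and by Theorem~\ref{thm:boundaryOfTheEndSum} preserves $\bd N=i_M(\bd M)$; hence $b(L_m)=b(K_m)-1$. The attaching arc $i_E(R_m\cap E_0)$ joins the components of $K_m$ containing $r$ and $r'$, so $\piz{L_m}=\piz{K_m}$ when $\eta,\eta'$ lie in the same component of $M$ and $\piz{L_m}=\piz{K_m}-1$ otherwise. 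Substituting into $g(S)=\piz{S}-(b(S)+\chi(S))/2$ yields $g(L_m)=g(K_m)+1$ in the same-component case and $g(L_m)=g(K_m)$ in the different-components case; since $(K_m)$ and $(L_m)$ are cofinal among compact subsurfaces of $M$ and $N$ respectively, taking $\sup_m$ finishes the genus claim.

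For the parity claim, if $M$ is orientable outside a compact set then every end of $M$, in particular $\eta$ and $\eta'$, is orientable, so Theorem~\ref{thm:endSumEndOrientations} makes the extraordinary end $\zeta$ and every ordinary end of $N$ orientable; compactness of $\E{N}$ then covers $\E{N}$ by finitely many orientable end-neighborhoods whose union contains $N$ outside a compact subset. Pick compact $M_0\subset M$ with $\bd M\subset M_0$, $M\setminus M_0$ orientable, and $\parity{M}=\parity{M_0}$, and arrange $M_0\subset K_0$. For $m$ large, $\bd N\subset L_m$ and $N\setminus L_m$ is orientable, so $\parity{N}=\parity{L_m}$. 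Since $g(L_m)-g(K_m)\in\{0,1\}$, we have $2g(L_m)\equiv 2g(K_m)\pmod 2$, and therefore $\parity{N}=\parity{L_m}=\parity{K_m}=\parity{M_0}=\parity{M}$. The main obstacle is the boundary-count identity $b(L_m)=b(K_m)-1$: verifying that exactly two end-boundary circles of $K_m$ merge requires arranging the exhaustion so that $\eta$ and $\eta'$ are separated into distinct unbounded complementary components before the handle is attached, which is delicate when $M$ has infinitely many ends or complicated end behavior, though once in place the $\chi$-computation handles the rest.
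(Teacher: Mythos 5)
Your proposal is correct and follows essentially the same route as the paper: exhaust $N$ by $L_m=i_M(K_m)\cup i_E(R_m)$, compute $b(L_m)=b(K_m)-1$, $\chi(L_m)=\chi(K_m)-1$, and $\piz{L_m}=\piz{K_m}$ or $\piz{K_m}-1$, then pass to the limit for genus and reduce parity mod $2$ using end-orientability and compactness of $\E{N}$. The extra hypothesis that each unbounded complementary component meets $\bd{K_m}$ in a single circle is unnecessary (and not obviously arrangeable); all that is needed, as in the paper, is that the two attaching intervals lie in distinct frontier circles of $K_m$, which follows directly from $\eta(K_m)\neq\eta'(K_m)$ for large $m$ together with $\bd{M}$ compact and contained in $K_m$.
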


\begin{proof}
We may assume that $N$ is connected.
If $M$ is orientable outside of a compact set, then $M$ has no non-orientable ends and
Theorem~\ref{thm:endSumOrientability} implies that $N$ also has no non-orientable ends.
Thus, the parity $\parity{N}$ is well-defined and equals the limit of $\parity{L_m}$ as $m\to\infty$.
On the other hand, the genus $g(M)$ is always well-defined (possibly infinite) and equals the limit of $g(L_m)$ as $m\to\infty$. We will compute the genus of $L_m$ and use that to compute the genus and parity of $N$.

Note that $R_m$ meets the boundary of $K_m$ in two intervals.
Since $\eta \neq \eta'$ and $\bd{M}$ is compact, these intervals lie in distinct components of $\bd{K_m}$ for all sufficiently large $m$.
If so, then $b(L_m) = b(K_m) - 1$.
For the Euler characteristic, we have
$\chi(L_m) = \chi(K_m) + \chi(R_m) - \chi(K_m \cap R_m) = \chi(K_m) - 1$.

If $M$ is connected, then for all sufficiently large $m$,
both components of $K_m \cap R_m$ lie in the same connected component of $K_m$
and so $\piz{L_m} = \piz{K_m}$. Thus
\begin{align*}
    g(L_m) &= \piz{L_m} - \frac{b(L_m)+\chi(L_m)}{2}\\
    &= \piz{K_m} - \frac{b(K_m)+\chi(K_m)}{2} +1\\
    &= g(K_m) +1
\end{align*}
In this case, $g(N) = g(M)+1$ and $\parity{N} = \parity{M}$ (if applicable).

If $M$ is disconnected, then one component of $K_m \cap R_m$ lies in each connected component of $M$
and so $\piz{L_m} = \piz{K_m} - 1$. Thus
\begin{align*}
    g(L_m) &= \piz{L_m} - \frac{b(L_m)+\chi(L_m)}{2}\\
    &= \piz{K_m} -1 - \frac{b(K_m)+\chi(K_m)}{2} + 1\\
    &= g(K_m)
\end{align*}
In this case, $g(N) = g(M)$ and $\parity{N} = \parity{M}$ (if applicable).
\end{proof}

\subsubsection{Genus of the Ends} \label{subsubsec:endSumEndsGenus}

Recall that the genus of an end $\tau \in \E{M}$ is the limit of $g(K_n)$ as $n \rightarrow \infty$ for any compact exhaustion $K_n$ of $M$. The genus of an end is either zero or infinity.

\begin{theorem}
	If $\alpha$ is an ordinary end of $N$, then $\alpha$ has the same genus as $\eh(\alpha)$.
	Furthermore, $\zeta$ has infinite genus if and only if either $\eta$ or $\eta'$ has infinite genus.
\end{theorem}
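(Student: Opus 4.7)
The first assertion follows immediately from Lemma~\ref{lem:ordinaryEnds}: each ordinary end $\alpha$ of $N$ is end-isomorphic to $\eh(\alpha) \in \E{M}$, and the genus of an end is by definition an end-invariant.

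For the second assertion, I argue each implication separately. For the easy direction, suppose without loss of generality that $\eta$ has infinite genus. Lemma~\ref{lem:componentsOfComplementExtraordinary} gives $\zeta(i_M(K)) \supset i_M(\eta(K))$ for every compact $K \subset M$, so by Theorem~\ref{thm:genusissize}, $g(\zeta(i_M(K))) \geq g(\eta(K)) = \infty$ and hence $\zeta$ has infinite genus.

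For the converse, I argue the contrapositive. Assume both $\eta$ and $\eta'$ have genus zero and exhibit a genus-zero neighborhood of $\zeta$. By the dichotomy that a surface end has genus $0$ or $\infty$, I may choose $m$ large enough that $g(\eta(K_m)) = g(\eta'(K_m)) = 0$. Set $V_m := \zeta(L_m)$, the unique component of $N \setminus L_m$ meeting $i_E(E_0)$; this is an open neighborhood of $\zeta$ built from $\eta(K_m)$, $\eta'(K_m)$, and the annular piece $i_E(E \setminus R_m)$ of the 1-handle. Since any compact subsurface of $V_m$ lies in $T_{m'} := \overline{V_m \cap L_{m'}}$ for some $m' \gg m$, it suffices to show $g(T_{m'}) = 0$ for every sufficiently large $m'$.

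I would verify this by an Euler characteristic count patterned on the proof of Theorem~\ref{thm:endSumGenus}. Decompose $T_{m'} = T^M \cup T^E$, where $T^M := T_{m'} \cap i_M(M)$ is the disjoint union of compact subsurfaces of $\overline{\eta(K_m)}$ and $\overline{\eta'(K_m)}$ (each of genus zero by Theorem~\ref{thm:genusissize}), and $T^E := T_{m'} \cap i_E(E) = i_E\bigl(\overline{R_{m'} \setminus R_m}\bigr)$ is a compact annulus in $\R^2$ (genus zero). The main obstacle is bookkeeping the overlap $T^M \cap T^E$: unlike the corresponding step of Theorem~\ref{thm:endSumGenus}, where the intersection is two rectangles, here for $m' \gg m$ the overlap splits into four disjoint compact rectangles, namely the two branches $y > m$ and $y < -m$ of $E_+ \setminus R_m$ lying in $\phi_+^{-1}(K_{m'}) \setminus \phi_+^{-1}(K_m)$ together with their $\phi_-$-counterparts. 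Inclusion-exclusion gives $\chi(T_{m'}) = \chi(T^M) + \chi(T^E) - 4$, the four rectangles join the two components of $T^M$ and the single component of $T^E$ into one, and $b(T_{m'})$ is determined analogously. Applying the formula of Definition~\ref{def:finitegenus} then yields $g(T_{m'}) = 0$, hence $g(V_m) = 0$ and $\zeta$ has genus zero.
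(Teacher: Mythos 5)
Your first assertion is handled exactly as in the paper (ordinary ends are end-isomorphic to their images under $\eh$), but the second assertion has genuine gaps in both directions. For your ``easy'' direction: the genus of the end $\zeta$ is the \emph{infimum} of $g(\intr{A})$ over all closed neighborhoods $A$ of $\zeta$, so to conclude $g(\zeta)=\infty$ you must bound from below the genus of \emph{arbitrarily small} neighborhoods. The neighborhoods $\zeta(i_M(K))$, $K\subset M$ compact, do not form a neighborhood basis of $\zeta$: by Lemma~\ref{lem:componentsOfComplementExtraordinary} each of them contains all of $i_E(E_0)$, so none of them excludes the compact set $i_E(\{0\}\times[-1,1])$. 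Monotonicity (Theorem~\ref{thm:genusissize}) therefore gives you infinite genus only for these large neighborhoods and says nothing about $g(\zeta(L_m))$, which is what the infimum actually sees; a priori, deleting the handle region (equivalently, regular neighborhoods of the rays) from $\eta(K_m)\cup\eta'(K_m)$ could collapse the genus, and ruling that out is precisely the nontrivial step. The paper does this by observing that $\closr{\zeta(i_M(K_m))}=\closr{\zeta(L_m)}\cup C_m$, where $C_m\subset i_E(R_m)$ is a single $2$-disk meeting $\closr{\zeta(L_m)}$ in two boundary arcs, whence $g(\closr{\zeta(L_m)})=g(\closr{\zeta(i_M(K_m))})$; since $\closr{\zeta(i_M(K_m))}$ is itself an end sum of $\closr{\eta(K_m)}$ and $\closr{\eta'(K_m)}$, Theorem~\ref{thm:endSumGenus} gives its genus as $g(\closr{\eta(K_m)})+g(\closr{\eta'(K_m)})$, and letting $m\to\infty$ yields $g(\zeta)=g(\eta)+g(\eta')$, which settles both directions at once.

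Your ``hard'' direction also misidentifies the pieces. One has $T_{m'}\cap i_E(E)\neq i_E(\closr{R_{m'}\setminus R_m})$: the portion of that square annulus with $|x|>m$ is glued into the tubes at parameter values below $a_m$, hence lies in $i_M(\intr{K_m})\subset L_m$ and is disjoint from $\closr{\zeta(L_m)}\supset T_{m'}$ (this is exactly why, in the paper's picture, the part of $i_E(R_m)$ not in $i_M(K_m)$ is only a disk $C_m$). The $E$-part of $T_{m'}$ consists of two pieces hugging the two ends of the core $E_0$, not an annulus encircling $R_m$; correspondingly the overlap with the $M$-part is larger than the four rectangles you name, the asserted equality $T_{m'}=T^M\cup T^E$ fails for your $T^E$, and the count of $b(T_{m'})$---which you leave as ``determined analogously'' and which is exactly where the genus formula succeeds or fails---is never carried out. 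So the computation as written does not establish $g(T_{m'})=0$. The underlying strategy (exhaust $\zeta(L_m)$ by compact subsurfaces and compute genus from $\piz{\cdot}$, $b$, $\chi$) is sound and close in spirit to the paper's, but to execute it you essentially need the paper's choice of pieces: compare $B_m=\closr{\zeta(L_m)}$ with $A_m=B_m\cup C_m$ and exploit the end-sum structure of $A_m$.
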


\begin{proof}
To simplify the argument, assume without loss of generality that $\eta(K_m) \neq \eta'(K_m)$ for all $m$.
If $\alpha$ is an ordinary end of $N$, then $\alpha$ and $\eh(\alpha)$ are isomorphic as ends.
Thus, $\alpha$ has the same genus as $\eh(\alpha)$.

Next, consider the extraordinary end $\zeta$.
To compute $g(\zeta)$, we introduce some subsurfaces of $N$ as depicted in Figure~\ref{fig:endsumgenusends}.

\begin{figure}[htb!]
    \centerline{\includegraphics[scale=0.8]{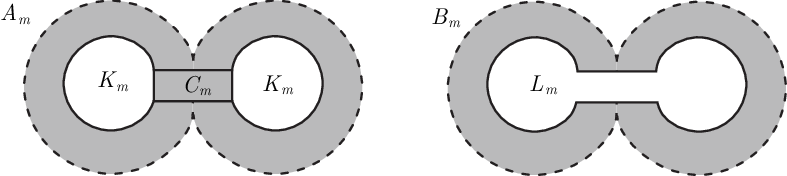}}
    \caption{Subsurfaces of $N$: $A_m$ (shaded at left) and $B_m$ (shaded at right).
		Here, $C_m\subset R_m$ is a $2$-disk, $A_m=B_m\cup C_m$, and
		the unshaded dumbbell (at right) is $L_m$.}
	\label{fig:endsumgenusends}
\end{figure}

Here, $A_m = \overline{\zeta\pa{K_m}}$,
$B_m = \overline{\zeta\pa{L_m}}$, and $A_m$ is the union of $B_m$ and a $2$-disk $C_m \subset R_m$.
As $A_m$ and $B_m$ are connected, we have $\piz{A_m} = \piz{B_m} = 1$.
Gluing $C_m$ onto $B_m$ separates one boundary component of $B_m$ into two,
so $b(A_m) = b(B_m) + 1$.
\[
\chi\pa{A_m} = \chi\pa{B_m} + \chi\pa{C_m} - \chi\pa{B_m \cap C_m} = \chi\pa{B_m} - 1
\]
It follows that $g(A_m) = g(B_m)$.

Note that $A_m$ is an end sum of $\overline{\eta\pa{K_m}}$ and $\overline{\eta'(K_m)}$.
By Theorem~\ref{thm:endSumGenus}, $g\pa{A_m} = g\pa{\overline{\eta\pa{K_m}}} + g\pa{\overline{\eta'\pa{K_m}}}$.
In the limit as $m \rightarrow \infty$, $g\pa{\eta\pa{K_m}}$ converges to $g\pa{\eta}$,
$g\pa{\eta'\pa{K_m}}$ converges to $g\pa{\eta'}$, and $g\pa{B_m}$ converges to $g\pa{\zeta}$.
Thus, $g\pa{\zeta} = g\pa{\eta}+ g\pa{\eta'}$.
In other words, $\zeta$ has infinite genus if and only if one of $\eta$ or $\eta'$ does.
\end{proof}

\subsection{Summary} \label{subsec:summary}

Let $M$ be a \PL\ surface with compact boundary.
Let $N$ be a result of adding a 1-handle at infinity to $M$ along distinct ends $\eta$ and $\eta'$.
The surface $M$ has connected components $M_i$.
If a component $M_i$ does not contain $\eta$ or $\eta'$, then $M_i$ is unchanged by the addition of the 1-handle.
So, it suffices to consider only the component of $N$ which contains the 1-handle.
From now on, assume $N$ is connected.

By Lemma~\ref{lem:partEndSumEnds}, there is a canonical homeomorphism $\eh: \E{N} \cong \E{M} / (\eta \equiv \eta')$.
For all ordinary ends $\alpha$ of $N$, $\alpha$ and $\eh(\alpha)$ are isomorphic as ends, so they have the same invariants.
The single extraordinary end $\zeta$ of $N$ is orientable if and only if both $\eta$ and $\eta'$ are orientable; $\zeta$ has zero genus if and only if both $\eta$ and $\eta'$ have zero genus.

If $\eta$ and $\eta'$ lie in distinct components of $M$, then $N$ is orientable if and only if that $M$ is orientable. Otherwise, $N$ is orientable if and only if $M$ is orientable and the 1-handle is oriented.

If $\eta$ and $\eta'$ lie in distinct components of $M$, then $g(N) = g(M)$.
Otherwise, $g(N) = g(M) + 1$. In either case, $\parity{N} = \parity{M}$.

We conclude: if $M$ is non-orientable, then $N$ is unique up to \PL\ isomorphism; if $M$ is orientable and the 1-handle is oriented, then $N$ is unique up to \PL\ isomorphism; and if $M$ is non-orientable and the 1-handle is not oriented, then $N$ is unique up to \PL\ isomorphism.
That completes the proof of Theorem~\ref{thm:endSumUniquenessPLCase} in \PL.
The \TOP\ and \DIFF\ versions of Theorem~\ref{thm:endSumUniquenessPLCase} are proved below in Section~\ref{sec:secondTheorem}.

\section{Main Theorem by the Uniqueness of Rays} \label{sec:rayUniquenessMain}

In this section, we prove a ray uniqueness result for surfaces. Namely, that all rays within a surface with compact boundary that point to a given end are related by a global isomorphism of the surface. Using that result, we give an alternate and relatively straightforward proof of our main theorem. The authors thank Ric Ancel for suggesting this strategy.

\subsection{Classification of Surfaces with Noncompact Boundary}\label{csncb}

Our first objective is to study and classify all rays $r$ in a given surface with compact boundary up to global isomorphism. We will do that indirectly by removing the interior of a closed regular neighborhood of $r$ to produce a surface with a single noncompact boundary component, and then classify the resulting surface.
We use Brown and Messer's classification of surfaces with possibly noncompact boundary~\cite{brownmesser}.
We recount the invariants in their classification using our notation.

Let $M$ be a connected surface. As in the case of surfaces with compact boundary, the following ``global invariants'' are essential: orientability, genus, parity, and compact boundary components. These invariants are defined exactly as in the compact boundary case, although now the number of compact boundary components may be infinite.

The rest of the invariants required to classify $M$ deal with the ends of $M$ and how those ends interact with its boundary components. As in the case of compact boundary, and end may be zero genus or infinite genus, and it may be orientable or non-orientable.

In surfaces with infinitely many compact boundary components, boundary components may accumulate in certain ends. Define an end of $M$ to be \defword{without compact boundary} provided that a neighborhood of that end contains no compact boundary components of $M$. It can be shown that $M$ has finitely many compact boundary components provided that every end of $M$ is without compact boundary. The surfaces we are interested in have finitely many boundary components, and thus have no ends with compact boundary.

Let $\bdnc{M}$ denote the union of all noncompact boundary components of $M$. The inclusion map $\bdnc{M} \rightarrow M$ induces a map $v: \E{\bdnc{M}} \rightarrow \E{M}$. Each connected component $B$ of $\bdnc{M}$ is a copy of $\R$ and has two ends, say $\alpha$ and $\beta$. The map $e: \E{\bdnc{M}} \rightarrow \pi_0(\bdnc{M})$ is defined by sending $\alpha$ and $\beta$ to $B$. 

We adopt the usual outward normal first convention for orienting the boundary of an oriented manifold.
For example, Figure~\ref{fig:positiveend} shows $M = \R^2_+$ equipped with the standard orientation
corresponding to the ordered basis $(e_1,e_2)$.
\begin{figure}
    \centerline{\includegraphics[scale=0.8]{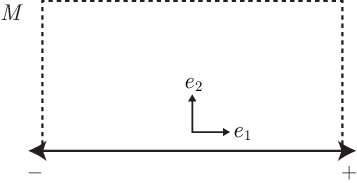}}
    \caption{Surface $M=\mathbb{R}^2_+$ with one noncompact boundary component having a positive end $+$ and a negative end $-$.}
	\label{fig:positiveend}
\end{figure}
The \defword{positive end} of the noncompact boundary component of $M$ is the end labeled $+$.
The \defword{negative end} of the noncompact boundary component of $M$ is the end labeled $-$.
The same conventions and terminology apply to ends of noncompact boundary components of general surfaces in which some disjoint neighborhoods of ends are oriented.

Following Brown and Messer~\cite[pp.~379--381]{brownmesser}, we 
define an \defword{orientation of $\E{\bdnc{M}}$} to be a subset $\mathcal{P} \subset \E{\bdnc{M}}$ as follows.
Given a compact subsurface $C\subset M$, a \defword{complementary domain of $C$} is the closure of one of the components of $M-C$.
\begin{itemize}
	\item If $M$ is orientable, then fix an orientation on $M$. Then, $\mathcal{P}$ consists of the resulting positive ends of $\bdnc{M}$.
	\item If $M$ is non-orientable, then there exists a (nonunique) sequence $Q_1,Q_2,\ldots$ of disjoint orientable complementary domains of compact subsurfaces of $M$ such that every orientable end of $M$ has some (unique) $Q_i$ as a neighborhood. Choose an orientation on each $Q_i$. Then, $\mathcal{P}$ consists of the resulting positive ends of $\bdnc{M}$.
\end{itemize}

It is important to note that there may be many valid orientations of $\E{\bdnc{M}}$ for a given surface $M$.
All of that end-related data can be combined to form a \defword{diagram} as in~\eqref{eq:diagramofsurface}.
Here, $\mathcal{K}$ denotes the set of non-orientable ends, $\mathcal{H}$ denotes the set of non-planar ends,  and $\mathcal{S}$ denotes the set of ends with compact boundary.
\begin{equation} \label{eq:diagramofsurface}
	\begin{tikzcd}
	\pi_0(\bdnc{M}) & \E{\bdnc{M}} \arrow{l}[swap]{e} \arrow{r}{v} & \E{M} & \mathcal{H} \arrow{l} & \mathcal{K} \arrow{l}\\
	& \mathcal{P} \arrow{u} & \mathcal{S} \arrow{u}
	\end{tikzcd}
\end{equation}
Two diagrams $\Delta$ of $M$ and $\Delta'$ of $M'$ are \defword{isomorphic} if there are bijective maps from each object in $\Delta$ to the corresponding object in $\Delta'$ which commute with the arrows of each diagram. The map from $\E{M}$ to $\E{M'}$ must also be a topological homeomorphism. It is important to note that, as  $\mathcal{P}$ is not uniquely defined, it is possible for one surface to have multiple non-isomorphic diagrams. Written in full, the classification theorem~\cite[p.~388]{brownmesser} is as follows.

\begin{theorem}[Classification of Surfaces]\label{thm:classificationOfAllSurfaces}
Let $M$ and $M'$ be two connected surfaces. If $M$ and $M'$ have the same genus, parity, orientability, and number of compact boundary components, and there exist diagrams $\Delta$ of $M$ and $\Delta'$ of $M'$ such that $\Delta \cong \Delta'$, then $M$ and $M'$ are isomorphic.
\end{theorem}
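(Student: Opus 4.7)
The plan is to adapt Richards' exhaustion-based proof of Theorem~\ref{thm:2manifolds} to the wider setting of possibly noncompact boundary, using the diagram $\Delta$ to track how noncompact boundary components $\bdnc{M}$ thread through the ends. The strategy is a back-and-forth construction: build compatible compact exhaustions of $M$ and $M'$, pair their complementary domains using $\Delta\cong\Delta'$, and inductively extend a homeomorphism between ever-larger compact subsurfaces by applying the classification of compact surfaces (Theorem~\ref{thm:finite2Manifolds}) piece by piece.

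First, I would produce \emph{tame} compact exhaustions $K_1\subset K_2\subset\cdots$ of $M$ and $K_1'\subset K_2'\subset\cdots$ of $M'$ by compact \PL\ subsurfaces (Theorem~\ref{thm:subsurfaces}), arranged so that: (i) each boundary component of $K_m$ either lies in a compact boundary component of $M$ or is a proper arc meeting a unique component of $\bdnc{M}$ transversely; (ii) every complementary domain of $K_m$ is a neighborhood of a single clopen subset of $\E{M}$ and contains at most the corresponding clopen subset of $\mathcal{P}$, $\mathcal{S}$, $\mathcal{H}$, $\mathcal{K}$; (iii) $K_m$ absorbs more and more compact boundary components as $m$ grows. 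A parallel construction is performed in $M'$. Because $\E{M}$ is totally disconnected and compact and $\pi_0(\bdnc{M})$ is countable, such tame exhaustions exist by iterated regular neighborhood enlargement together with the transversality argument used in Lemma~\ref{lem:compactSteppingStone}.

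Second, use the diagram isomorphism $\Delta\cong\Delta'$ to match complementary domains of $K_m$ with those of $K_m'$. Because the isomorphism is compatible with $v$ and $e$, matched complementary domains contain the same number of ``cut'' arcs into $\bdnc{M}$ in the same cyclic order around the relevant boundary components, and the corresponding ends have the same genus, orientability, and compact-boundary data. Inductively, construct homeomorphisms $f_m:K_m\to K_m'$ such that $f_{m+1}$ restricted to $K_m$ agrees with $f_m$ up to an isotopy supported on $\overline{K_{m+1}-K_m}$, and such that $f_m$ sends each ``cut'' arc on $\bd K_m$ to the matched cut arc on $\bd K_m'$. For the base case and inductive step, apply Theorem~\ref{thm:finite2Manifolds} to each pair of matched annular/pair-of-pants-style ``shells'' $\overline{K_{m+1}-K_m}$ versus $\overline{K_{m+1}'-K_m'}$: the shells have equal Euler characteristic, equal number of boundary components, and the same orientability (from the matched end invariants), so they are homeomorphic, and the homeomorphism can be chosen to agree with the prescribed boundary identification after adjusting by isotopies of the $1$-manifolds in the boundary. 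Taking the direct limit yields a homeomorphism $f:M\to M'$.

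The main obstacle will be honoring the orientation data $\mathcal{P}$, which behaves differently in the two cases. In the orientable case, after fixing orientations of $M$ and $M'$, the diagram isomorphism forces $f$ to be orientation-preserving on each orientable complementary domain, and then the induced designations of positive/negative ends of $\bdnc{M}$ transport correctly. In the non-orientable case, $\mathcal{P}$ depends on a choice of disjoint orientable complementary domains $Q_i$; I would arrange the tame exhaustion so that each $Q_i$ is exhausted by oriented subsurfaces of the $K_m$, and similarly for $M'$, and then ensure at every stage that the homeomorphism of shells that meet some $Q_i$ is built to be orientation-preserving with respect to the chosen orientations. A secondary technical point is that the parity invariant, which can exist even when the genus is infinite, must be preserved; this is automatic once the oriented pieces are matched correctly, because parity is read off from any compact subsurface that contains $\bd M$ and whose complement is orientable (Lemma~\ref{lem:parityIsOrientability}), and the inductive construction preserves exactly such a subsurface at each stage. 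Assembling these pieces completes the proof.
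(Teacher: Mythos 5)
The paper does not actually prove this statement: it is quoted from Brown and Messer~\cite[p.~388]{brownmesser}, and the only internal commentary is the sketch of their iterative scheme given to justify the Addendum (Theorem~\ref{thm:classificationOfAllSurfacesAddendum}). So your proposal must stand on its own as a proof of the Brown--Messer theorem. Its overall shape---adaptive compact exhaustions, pairing of complementary domains via the diagram, and inductive extension of homeomorphisms of compact subsurfaces---is indeed the shape of the cited proof (and of Richards' proof of Theorem~\ref{thm:2manifolds} in the empty-boundary case).

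However, as written there is a genuine gap at the heart of the induction. You build the tame exhaustions $(K_m)$ of $M$ and $(K_m')$ of $M'$ in advance and then assert that matched shells $\overline{K_{m+1}-K_m}$ and $\overline{K_{m+1}'-K_m'}$ ``have equal Euler characteristic, equal number of boundary components, and the same orientability (from the matched end invariants).'' That is false in general: how much genus, how many cross-caps, how many compact boundary components, and how many arcs of $\bdnc{M}$ are captured at a given stage of an exhaustion is not determined by any end invariant, and two surfaces satisfying all the hypotheses admit exhaustions whose shells disagree (all the genus of $M$ may already lie in $K_1$ while $K_1'$ is planar). The essential content of the Richards/Brown--Messer argument is precisely the adaptive back-and-forth that you mention in passing but do not carry out: given $f_k\colon C_k\to C_k'$, one chooses $C_{k+1}$ and must then \emph{enlarge} a candidate $C_{k+1}'$ so that the new piece absorbs the right genus, cross-caps, compact boundary components, and boundary arcs, and one must prove---using equality of genus and parity, the homeomorphism of end triples, the maps $e$ and $v$, and the sets $\mathcal{H}$, $\mathcal{K}$, $\mathcal{S}$, $\mathcal{P}$---that such an enlargement always exists and can be made compatible with the boundary identification already fixed by $f_k$. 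Your sketch asserts this where it needs proof. Likewise, the claim that compatibility with $e$ and $v$ forces the cut arcs to occur ``in the same cyclic order'' around the matched boundary circles is not automatic: that order (and the side on which each complementary domain attaches) is not recorded in the diagram, and controlling it is exactly where the orientation data $\mathcal{P}$ and the orientation-preserving choices enter Brown and Messer's Lemma~2.1. So the proposal identifies the correct framework but leaves unproved the steps where the hypotheses of the theorem actually do their work.
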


We will need the following strengthening of Theorem~\ref{thm:classificationOfAllSurfaces}.

\begin{theorem}[Addendum to the Classification of Surfaces]\label{thm:classificationOfAllSurfacesAddendum}
Suppose furthermore that we are given an isomorphism $h: \Delta \to \Delta'$.
Then, there is an isomorphism $\psi: M \to M'$ which induces the given isomorphism $h$ of diagrams.
If $M$ and $M'$ are oriented surfaces, and their orientations agree with the orientations of $\Delta$ and $\Delta'$ respectively, then $\psi$ may be chosen to be orientation-preserving.
\end{theorem}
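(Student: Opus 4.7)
The plan is to obtain $\psi$ by inspecting the proof of Theorem~\ref{thm:classificationOfAllSurfaces} rather than quoting it as a black box: Brown and Messer build their isomorphism from the data of a diagram isomorphism, so fixing $h$ at the outset pins down the construction and gives the addendum essentially for free. First I would replace arbitrary compact exhaustions of $M$ and $M'$ by exhaustions $(A_m)$ and $(A_m')$ of connected, compact \PL\ subsurfaces whose complementary domains are matched under $h$ via the induced map on end-space neighborhoods. Such exhaustions exist because $\E{M}$ is compact, Hausdorff, and totally disconnected (Lemma~\ref{lem:endSpaceProperties}) and the subsets $\mathcal{H},\mathcal{K},\mathcal{S}$ are open, so one may refine so that each complementary domain of $A_m$ is a basic end-space neighborhood on which all diagram subsets are constant, with the assignment compatible with the bijections on $\pi_0(\bdnc{M})$ and $\E{\bdnc{M}}$ prescribed by $h$.

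Next, I would build isomorphisms $\psi_m: A_m \to A_m'$ inductively. The set $A_m - \intr{A_{m-1}}$ is a finite disjoint union of compact surfaces; each piece has orientability, genus, parity, and a boundary-circle count (a mix of compact boundary components of $M$, components of $\bd A_{m-1}$, and arcs cut out by noncompact boundary components of $M$) entirely determined by the diagram data of $\Delta$, and $h$ carries this data to the analogous data on the $M'$ side. Theorem~\ref{thm:finite2Manifolds} then supplies a piecewise isomorphism, and the inductive hypothesis that $\psi_{m-1}$ has already been built on the common boundary circles allows the pieces to be glued into $\psi_m$ extending $\psi_{m-1}$. Setting $\psi = \bigcup_m \psi_m$ produces an isomorphism $M \to M'$ which, by construction, induces $h$ on every piece of the diagram.

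The main obstacle will be ensuring consistency across stages: the noncompact boundary arcs produced in successive $A_m$ must concatenate into copies of $\R$ whose two ends in $\E{\bdnc{M}}$ are carried by $\psi$ to the images prescribed by $h$, and the complementary-domain matching at stage $m+1$ must refine that of stage $m$. I would handle this by constructing the two exhaustions in lockstep: after $\psi_m$ is built, choose $A_{m+1}\subset M$ large enough, then produce $A_{m+1}'\subset M'$ as a compact \PL\ subsurface whose complementary domains are paired to those of $A_{m+1}$ under $h$ and which meets $\psi_m(\bd A_m)$ correctly. Existence of such $A_{m+1}'$ is a straightforward \PL\ surgery argument in the spirit of Lemma~\ref{lem:compactSteppingStone}, using Theorem~\ref{thm:subsurfaces} to enlarge an initial candidate and a transversality adjustment to make its boundary circles match $\psi_m(\bd A_m)$ and to collect the right complementary domains.

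For the oriented addendum, when $M$ and $M'$ carry orientations inducing $\mathcal{P}$ and $\mathcal{P}'$, the hypothesis that $h$ is an isomorphism of diagrams forces $h(\mathcal{P}) = \mathcal{P}'$. At every inductive stage each piece of $A_m - \intr{A_{m-1}}$ and its target in $A_m'$ is then oriented, and the matching of its boundary circles together with the positive-end data $\mathcal{P},\mathcal{P}'$ specifies a unique orientation class among the isomorphisms produced by Theorem~\ref{thm:finite2Manifolds}. Choosing the orientation-preserving representative at each stage yields a globally orientation-preserving $\psi$.
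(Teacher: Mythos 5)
Your overall strategy is the same one the paper uses: the paper's proof of the Addendum is precisely "run Brown and Messer's iterative construction with the diagram isomorphism $h$ fixed in advance, and choose the first nonempty homeomorphism orientation-preserving," citing their compatibility conditions (their Lemma~2.1 and the surrounding discussion). The difference is that you try to supply the iteration yourself, and the version you describe has a genuine gap at its central step. You assert that each piece $A_{m}-\intr{A_{m-1}}$ has "orientability, genus, parity, and a boundary-circle count \ldots entirely determined by the diagram data of $\Delta$," and that a matching $A_{m+1}'\subset M'$ can then be produced by a "straightforward \PL\ surgery argument." Neither claim is correct: the diagram only records global invariants and end data, not how much finite genus (or nonorientability) happens to sit in a given complementary domain at a finite stage, and without stronger inductive hypotheses the required $A_{m+1}'$ may simply fail to exist.

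Concretely, let $M=M'$ be the twice-punctured torus with ends $e_1,e_2$ (both of genus zero) and let $h$ be the identity isomorphism of diagrams. Take $A_1$ to be an annular neighborhood of a small circle around the puncture $e_1$ and $A_1'$ an annular neighborhood of a circle around $e_2$. Both are annuli, each complementary domain is a basic end-space neighborhood of a single end on which all diagram subsets are constant, and the complementary domains are matched by $h$ via their ends, so $\psi_1:A_1\to A_1'$ is admissible under your inductive hypothesis. But the complementary domain of $A_1$ containing $e_2$ has genus $1$, while the matched complementary domain of $A_1'$ (also containing $e_2$) has genus $0$. If $A_2\supset A_1$ captures the handle, the new piece meeting that domain has genus $1$, and no compact subsurface of the genus-zero domain on the $M'$ side can absorb it: the extension is impossible, no matter how $A_2'$ is chosen. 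This is exactly what Brown and Messer's compatibility conditions rule out -- the inductive data must include that matched complementary domains carry equal remaining genus, parity, orientability class, and boundary information, and the intermediate subsurfaces must be chosen (by a back-and-forth that also guarantees the $A_m'$ exhaust $M'$, which your "$M$ always leads" scheme does not ensure) so that these conditions are preserved. Your orientation argument at the end is fine in spirit, but it rests on the same inductive machinery; to make the proposal sound you either need to restore those compatibility conditions and the alternating exhaustion explicitly, or do as the paper does and invoke Brown and Messer's proof, observing that it already builds the homeomorphism inducing a prescribed diagram isomorphism and that its first step can be taken orientation-preserving.
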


The Addendum follows by Brown and Messer's proof of Theorem~\ref{thm:classificationOfAllSurfaces}.
Their proof~\cite[p.~388--389]{brownmesser} is iterative and begins with the empty function $f_0$.
Given a homeomorphism $f_k:C_k\to C_k'$ (with certain properties) of compact subsurfaces of $M$ and $M'$ respectively,
they extend $f_k$ to a homeomorphism $f_{k+1}:C_{k+1}\to C_{k+1}'$ of larger compact subsurfaces.
If $M$ and $M'$ are oriented, then the first nonempty function $f_1$ may be chosen to respect orientation.
In general, their proof imposes compatibility conditions between the constructed homeomorphisms and isomorphisms of diagrams
(see~\cite[pp.~383--388]{brownmesser}, especially Lemma~2.1, the paragraph on p.~384 before the proof of Lemma~2.1, and the proof of Lemma~2.1).
The present authors found it instructive to run their proof on various examples to gain familiarity with their orientation and compatibility conventions.

In the remainder of this subsection, we present a useful lemma and some applications of the Addendum to demonstrate its utility.
In the next subsection, we prove a ray unknotting theorem (Theorem~\ref{thm:rayUniqueness})
for rays in certain surfaces.
Recall from Section~\ref{sec:endsIntroduction} that a precise definition was given
for a proper ray $r$ to \defword{point to} an end $\eta$ of $M$.
We now expand this definition. Say an end $\tau$ of a (noncompact) boundary component $B$ of $M$ \defword{points to} and end $\eta$ of $M$ if $\E{i}(\tau) = \eta$, where where $i: B \rightarrow M$ is the inclusion map. Equivalently, $\tau$ points to $\eta$ if every ray in $B$ that points to $\tau$ also points to $\eta$.

\begin{lemma}\label{evenendslemma}
Let $M$ be a connected surface and let $\eta$ be an end of $M$.
If $M$ has finitely many noncompact boundary components,
then the number of ends of noncompact boundary components of $M$
that point to $\eta$ is even.
In other words, if $\card{\pi_0(\bdnc{M})}$ is finite, then
$\card{v^{-1}(\eta)}$ is even.
In particular, if $M$ has exactly one noncompact boundary component $B$,
then both ends of $B$ point to the same end of $M$.
\end{lemma}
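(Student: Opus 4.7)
The plan is to localize near $\eta$ and exploit that the manifold boundary of the resulting neighborhood has ``infinite ends'' that come in pairs, with those ends in bijection with $v^{-1}(\eta)$. First, using Theorem~\ref{thm:subsurfaces} together with a PL-transversality argument along the lines of Lemma~\ref{lem:compactSteppingStone}, I would choose a compact subsurface $K \subset M$ that contains every compact component of $\bd{M}$ in its manifold interior, meets $\bd{M}$ transversely, and is large enough that, for each of the at most $2\card{\pi_0(\bdnc{M})}<\infty$ ends $\tau \in v^{-1}(\eta)$, a proper ray representing $\tau$ in the corresponding $\mathbb{R}$-component of $\bdnc{M}$ lies entirely in $\eta(K)$.

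Next, set $U = \overline{\eta(K)}$, which is a closed sub-$2$-manifold of $M$. Its manifold boundary decomposes as $\bd{U} = (\bd{M} \cap U) \cup F$, where $F = U \cap K$ is the topological frontier of $\eta(K)$ in $M$. Since $F$ is a closed subset of the compact $K$, it is compact. Consequently every noncompact connected component $C$ of $\bd{U}$ is a copy of $\mathbb{R}$ whose two ends cannot stay in $F$ and so eventually lie in $\bd{M}$; since the components of $\bdnc{M}$ are pairwise disjoint copies of $\mathbb{R}$, each end of $C$ is eventually contained in a single noncompact boundary component $B_i$ of $M$ and thereby determines an element of $v^{-1}(\eta)$.

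Conversely, each $\tau \in v^{-1}(\eta)$ gives, by the choice of $K$, a proper ray inside $B_i \cap \eta(K) \subset \bd{U}$, which is one end of some noncompact component of $\bd{U}$. These two constructions are mutually inverse, yielding a bijection between $v^{-1}(\eta)$ and the set of ends of noncompact components of $\bd{U}$. Since $\bdnc{U}$ embeds into $\bdnc{M}$ it has finitely many components, each contributing exactly two ends, so $\card{v^{-1}(\eta)} = 2\card{\pi_0(\bdnc{U})}$ is even. The final assertion of the lemma is then immediate: if $\bdnc{M}=B$ were a single copy of $\mathbb{R}$ with its two ends pointing to distinct ends $\eta_\alpha \ne \eta_\beta$ of $M$, then $\card{v^{-1}(\eta_\alpha)}=1$, contradicting evenness.

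I expect the main obstacle to be producing $K$ with the stated transversality properties and confirming that $U$ is a sub-$2$-manifold with manifold boundary exactly $(\bd{M}\cap U)\cup F$. This should follow from PL regular-neighborhood theory applied much as in Lemma~\ref{lem:compactSteppingStone}; in particular, one needs $K$ to meet each $B_i$ in a finite union of compact arcs, so that each noncompact component of $\bd{U}$ alternates only finitely many times between arcs on $\bd{M}$ and arcs on $F$ before escaping to infinity along $\bd{M}$.
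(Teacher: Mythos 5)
Your overall strategy is the paper's: pass to $U=\closr{\eta(K)}$, observe that its noncompact boundary components are finitely many copies of $\R$, each with two ends, and match those ends with $v^{-1}(\eta)$. But there is a genuine gap in how you choose $K$, and it sits exactly where the counting has to do its work. Your conditions on $K$ only guarantee the \emph{backward} map: each $\tau\in v^{-1}(\eta)$ yields an end of a noncompact component of $\bd{U}$. In the \emph{forward} direction you assert that an end of a noncompact component of $\bd{U}$, being eventually contained in some $B_i\subset\bdnc{M}$, ``thereby determines an element of $v^{-1}(\eta)$.'' All you actually know is that the corresponding end $\tau'$ of $B_i$ eventually lies in $\eta(K)$, so the end $\sigma$ of $M$ it points to satisfies $\sigma(K)=\eta(K)$; nothing in your hypotheses forces $\sigma=\eta$. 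For instance, in the strip $[0,1]\times\R$ with $K$ a small disk in the interior, your conditions on $K$ are satisfied, yet $\bd{U}$ has four line-ends while $\card{v^{-1}(\eta_+)}=2$: the two constructions are not mutually inverse. More seriously for the logic, without separating $\eta$ from the other ends hit by $\E{\bdnc{M}}$, the even quantity you compute is $\sum_{\sigma(K)=\eta(K)}\card{v^{-1}(\sigma)}$, and evenness of that sum does not give evenness of the single fiber $\card{v^{-1}(\eta)}$ --- which is precisely what the lemma (and its application to a single boundary line) requires.

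The repair is what the paper builds into its choice of $K$: since $\card{\E{\bdnc{M}}}\le 2\card{\pi_0(\bdnc{M})}<\infty$, only finitely many ends $\tau_1,\dots,\tau_k$ of $M$ other than $\eta$ are pointed to by ends of $\bdnc{M}$, so one may enlarge $K$ until $\eta(K)\neq\tau_i(K)$ for every $i$; then every line-end of $\bd{U}$ does point to $\eta$ and your bijection (and hence the parity count) goes through. Two smaller points: your requirement that $K$ contain \emph{every} compact component of $\bd{M}$ is not achievable in general (the lemma allows infinitely many compact boundary components, which may accumulate at ends) and is also unnecessary, since compact boundary circles only contribute compact components to $\bd{U}$; and the transversality/subsurface bookkeeping you worry about at the end is handled more simply by choosing $K$ so that $\closr{M\setminus K}$ is a subsurface, as in the paper's proof.
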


%\begin{proof}
%Distinct ends of $M$ have disjoint $0$-neighborhoods and at most finitely many ends
%of $M$ are pointed to by ends of noncompact boundary components of $M$.
%So, there exists a $0$-neighborhood $N$ of $\eta$ that is disjoint from neighborhoods of all other ends of $M$
%that are pointed to by noncompact boundary components of $M$.
%Each noncompact boundary component of $N$ contributes two to the count of ends of noncompact boundary components of $M$ pointing to $\eta$.
%That proves the first conclusion.
%The second conclusion follows immediately from the first.
%\end{proof}

\begin{proof}
Let $\eta, \tau_1, \dots, \tau_k$ be all the ends of $M$ pointed to by ends of $\bdnc{M}$.
Choose a large compact subsurface $K \subset M$ such that $\closr{M \setminus K}$ is also a subsurface of $M$, and $\eta(K) \neq \tau_i(K)$ for any $i$. Consider $\closr{\eta(K)}$. This is a subsurface of $M$, and the ends of noncompact boundary components of $\closr{\eta(K)}$ pointing to $\eta$ are in one-to-one correspondence with ends of noncompact boundary components of $M$ pointing to $\eta$. Since every end of a noncompact boundary component of $\closr{\eta(K)}$ points to $\eta$ and there are an even number of ends of noncompact boundary components of $\closr{\eta(K)}$, then an even number of ends of noncompact boundary components of $M$ point to $\eta$.
That proves the first conclusion. The second conclusion follows immediately from the first.
\end{proof}

\begin{remark}
The first conclusion of Lemma~\ref{evenendslemma} also holds provided $\eta$ is an isolated end of $M$
and at most finitely many noncompact boundary components of $M$ point to $\eta$.
The proof is similar.
The conclusions of Lemma~\ref{evenendslemma} are false without some restrictions on $M$ or $\eta$.
Consider the surface $M$ depicted in Figure~\ref{fig:oddex} that is obtained from the closed disk by removing a sequence of boundary points and the single limit point of that sequence.
\begin{figure}[htb!]
    \centerline{\includegraphics[scale=0.85]{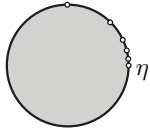}}
    \caption{Surface $M$ obtained from the closed disk by removing a sequence of boundary points and the single limit point of that sequence.}
	\label{fig:oddex}
\end{figure}
The sequence converges to the limit point from one side. The single nonisolated end of $M$ is denoted $\eta$.
That end $\eta$ of $M$ is pointed to by exactly one end of a noncompact boundary component of $M$.
\end{remark}

\begin{corollary}\label{swapendsofb}
Let $M$ be a connected surface with exactly one noncompact boundary component $B$.
Then, there exists an automorphism $\psi:M\to M$ that induces the identity 
on $\E{M}$ and interchanges the two ends of $B$.
\end{corollary}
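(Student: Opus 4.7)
My plan is to invoke the Addendum to the Classification of Surfaces (Theorem~\ref{thm:classificationOfAllSurfacesAddendum}) with $M' = M$. By Lemma~\ref{evenendslemma}, both ends of $B$, call them $\tau_1$ and $\tau_2$, point to a single end $\eta$ of $M$. The strategy is to produce two diagrams $\Delta$ and $\Delta'$ of $M$ and an isomorphism $h\colon\Delta\to\Delta'$ that is the identity on every object of the diagram except $\E{\bdnc{M}} = \{\tau_1,\tau_2\}$, where $h$ swaps $\tau_1$ and $\tau_2$. The Addendum then produces the required automorphism $\psi\colon M\to M$ inducing $h$, so $\psi$ acts as the identity on $\E{M}$ while its restriction to $B = \bdnc{M}$ interchanges the two ends of $B$.

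Commutativity of $h$ with the arrows $e$ and $v$ is immediate: since $B$ is the unique noncompact boundary component, $e(\tau_1) = e(\tau_2) = B$; and by Lemma~\ref{evenendslemma}, $v(\tau_1) = v(\tau_2) = \eta$. Hence swapping $\tau_1$ and $\tau_2$ does not disturb either arrow, and the swap is trivially compatible with the other maps in the diagram (involving $\mathcal{H}$, $\mathcal{K}$, $\mathcal{S}$), which do not touch $\E{\bdnc{M}}$.

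The delicate point is to arrange the orientation data so that $h(\mathcal{P}) = \mathcal{P}'$ with $\mathcal{P}'$ itself a \emph{valid} orientation of $\E{\bdnc{M'}}$. If $M$ is orientable, I would take $\mathcal{P}$ from a fixed orientation $\omega$ of $M$ and $\mathcal{P}'$ from $-\omega$; reversing the global orientation swaps the positive and negative ends of every noncompact boundary component, and since $B$ is the only such component, the change is exactly $\tau_1 \leftrightarrow \tau_2$. If $M$ is non-orientable and $\eta$ is non-orientable, then no orientable complementary domain $Q_i$ in any admissible sequence can contain a neighborhood in $\bdnc{M}$ of either $\tau_1$ or $\tau_2$ (such a neighborhood would force $Q_i$ to approach $\eta$, contradicting $Q_i$ orientable and $\eta$ non-orientable), so neither lies in $\mathcal{P}$ and the swap $h(\mathcal{P})$ equals $\mathcal{P}$. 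If $M$ is non-orientable but $\eta$ is orientable, then some $Q_i$ is a neighborhood of $\eta$, and reversing only the orientation of that $Q_i$ flips the positive/negative status of exactly those ends of $\bdnc{M}$ whose tails lie in $Q_i$; choosing $Q_i$ sufficiently small (shrinking toward $\eta$) forces this set to be exactly $\{\tau_1,\tau_2\}$, yielding the desired $\mathcal{P}'$.

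The main obstacle I anticipate is this last case: verifying carefully that $Q_i$ can be chosen so that its reversal of orientation affects $\mathcal{P}$ only at $\tau_1$ and $\tau_2$, and not at some stray end of $\bdnc{M}$ whose tail happens to lie in $Q_i$. Because $B$ is the unique noncompact boundary component, the only candidates for such stray ends are $\tau_1$ and $\tau_2$ themselves, so careful bookkeeping should resolve this. Once the compatibility $h(\mathcal{P})=\mathcal{P}'$ is confirmed, the Addendum yields $\psi\colon M\to M$ inducing $h$, and reading $h$ off on $\E{M}$ and on $\E{B}$ gives the two properties demanded by the corollary.
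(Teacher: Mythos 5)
Your proposal is correct and follows essentially the same route as the paper: apply Lemma~\ref{evenendslemma} to see both ends of $B$ point to one end $\eta$, build two diagrams differing only in the orientation data $\mathcal{P}$ (global orientation reversal when $M$ is orientable, reversal on the orientable complementary domain near $\eta$ when $M$ is non-orientable but $\eta$ is, and the empty orientation when $\eta$ is non-orientable), and invoke the Addendum (Theorem~\ref{thm:classificationOfAllSurfacesAddendum}) to realize the diagram isomorphism swapping the two ends of $B$. Your treatment of the non-orientable-$M$, orientable-$\eta$ case just spells out in more detail what the paper compresses into ``the proof from the first case also applies.''
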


\begin{proof}
By the previous Lemma~\ref{evenendslemma}, both ends of $B$ point to the same end $\eta$ of $M$.
First, consider the case where $M$ is orientable.
Fix an orientation on $M$ and consider the corresponding diagram $\Delta$ of $M$.
Let $\Delta'$ be the diagram for $M$ with the opposite orientation.
We have an isomorphism of diagrams $h:\Delta\to\Delta'$ that swaps the ends of $B$ and is otherwise the identity.
The desired conclusion now follows by the Addendum (Theorem~\ref{thm:classificationOfAllSurfacesAddendum}).
Second, consider the case where $M$ is not orientable, but $\eta$ is orientable. Then the proof from the first case also applies.
Third, consider the case where $\eta$ is not orientable. Note that $\Delta$ has empty orientation. We have an isomorphism $h: \Delta \rightarrow \Delta$ that swaps the ends of $B$, and the conclusion follows by the Addendum.
\end{proof}

\begin{remark}
If $M$ has more than one noncompact boundary component, then it may not be possible
to interchange the ends of a given noncompact boundary component $B$ of $M$ by an automorphism of $M$.
Consider the strip $[0,1]\times\R$ and connect sum a sequence of projective planes or tori off to one end.
A planar example is in Figure~\ref{fig:oddex} above (see also Dickmann~\cite[Ex.~3.5]{dickmann}).
\end{remark}

A curious question arises: which oriented surfaces admit an orientation reversing automorphism?
The classification of compact, connected, oriented surfaces
(see Theorem~\ref{thm:finite2Manifolds} in Section~\ref{sec:classification})
implies that each such surface admits an orientation reversing automorphism.
(The question has also been studied for closed manifolds in higher dimensions by M\"{u}llner~\cite{mullner}.)
For noncompact surfaces, the situation is more complicated.
We give a positive result as well as two surfaces not admitting an orientation reversing automorphism.

\begin{corollary}
Let $M$ be a connected, oriented, noncompact surface with zero or one noncompact boundary component(s).
Then $M$ admits an orientation reversing automorphism.
\end{corollary}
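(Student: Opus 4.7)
The plan is to obtain an orientation reversing automorphism of $M$ by invoking the Addendum (Theorem~\ref{thm:classificationOfAllSurfacesAddendum}) applied with source $M$ carrying its given orientation and target $M$ carrying the opposite orientation. An orientation preserving isomorphism from $(M,\text{orig})$ to $(M,\text{opp})$ is the same thing as an orientation reversing self-homeomorphism of $M$, so it suffices to produce a diagram isomorphism $h\colon\Delta\to\Delta'$, where $\Delta$ and $\Delta'$ are the diagrams of $M$ corresponding to the two opposite orientations.

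Both diagrams share the same underlying data on $\pi_0(\bdnc{M})$, $\E{\bdnc{M}}$, $\E{M}$, $\mathcal{H}$, $\mathcal{K}$, and $\mathcal{S}$; the only place they differ is in the distinguished subset $\mathcal{P}\subset\E{\bdnc{M}}$ of positive ends, which in $\Delta'$ is the complement (inside $\E{\bdnc{M}}$) of the one in $\Delta$. So the task reduces to finding a bijection of $\E{\bdnc{M}}$ to itself that carries $\mathcal{P}$ to its complement and is compatible with the maps $e$ and $v$ in diagram~\eqref{eq:diagramofsurface}.

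If $M$ has no noncompact boundary components, then $\E{\bdnc{M}}=\varnothing$ and there is nothing to check: the identity on every other object of $\Delta$ is an isomorphism $h\colon\Delta\to\Delta'$. If $M$ has exactly one noncompact boundary component $B$, then $\E{\bdnc{M}}=\{\alpha,\beta\}$ consists of the two ends of $B$, exactly one of which is positive in $\Delta$ and the other in $\Delta'$. Take $h$ to be the identity on $\pi_0(\bdnc{M})$, $\E{M}$, $\mathcal{H}$, $\mathcal{K}$, $\mathcal{S}$, and the involution swapping $\alpha\leftrightarrow\beta$ on $\E{\bdnc{M}}$. Compatibility with $e\colon\E{\bdnc{M}}\to\pi_0(\bdnc{M})$ is automatic since both $\alpha$ and $\beta$ map to the single component $B$.

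The main obstacle (and the only nontrivial verification) is compatibility with $v\colon\E{\bdnc{M}}\to\E{M}$: we need $v(\alpha)=v(\beta)$, that is, both ends of $B$ must point to the same end of $M$. This is exactly the content of Lemma~\ref{evenendslemma} applied to the connected surface $M$ with one noncompact boundary component. With that compatibility in hand, $h$ is a genuine isomorphism of diagrams, and the Addendum produces the desired orientation preserving isomorphism $\psi\colon(M,\text{orig})\to(M,\text{opp})$, i.e., an orientation reversing automorphism of $M$.
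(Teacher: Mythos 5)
Your proof is correct and follows essentially the same route as the paper: in both cases the orientation-reversing automorphism comes from the Addendum (Theorem~\ref{thm:classificationOfAllSurfacesAddendum}) applied to $M$ with its two opposite orientations, with Lemma~\ref{evenendslemma} supplying the key compatibility with $v$ needed to swap the two ends of the noncompact boundary component. The only cosmetic difference is that the paper routes the one-boundary-component case through Corollary~\ref{swapendsofb} and then observes that swapping the ends of $B$ forces orientation reversal locally (hence globally), whereas you inline that corollary's diagram-swap argument and read off orientation reversal directly from the orientation-preserving clause of the Addendum.
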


\begin{proof}
First, consider the case where $M$ has zero noncompact boundary components.
Let $M'$ be $M$ with the opposite orientation.
By the Addendum (Theorem~\ref{thm:classificationOfAllSurfacesAddendum}), there is an orientation-preserving isomorphism from $M$ to $M'$.
That is an orientation-reversing automorphism of $M$. 
Second, consider the case where $M$ has one noncompact boundary component $B$. By the previous corollary, there is an automorphism $\psi: M \rightarrow M$ that interchanges the ends of $B$. 
Evidently, $\psi$ is locally orientation reversing near $B$, and so $\psi$ is globally orientation reversing.
\end{proof}

\begin{remark}
Figure~\ref{fig:noorauto} depicts two oriented surfaces that do not admit an orientation reversing automorphism.
\begin{figure}[htb!]
    \centerline{\includegraphics[scale=0.9]{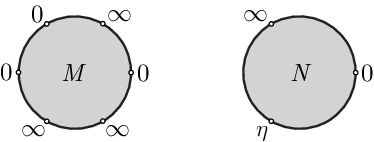}}
    \caption{Surfaces $M$ and $N$ that do not admit orientation reversing automorphisms.}
	\label{fig:noorauto}
\end{figure}
The surface $M$ is obtained from the closed disk (with its standard orientation) by removing six boundary points
and connect summing a sequence of tori at each end marked $\infty$.
So, $M$ has six ends: three of genus zero and three of infinite genus.
The cyclic order of the genera of the ends of $M$ prevent the existence of an orientation reversing automorphism of $M$.
The surface $N$ is obtained from the disjoint union of the closed disk and closed upper half space as follows.
Remove three boundary points from the closed disk,
connect sum a sequence of tori at the end marked $\infty$,
then glue in a sequence of tubes between the end marked $\eta$ and the end of the copy of closed upper half space.
The resulting surface has three ends: one of genus zero, one of infinite genus, and $\eta$. The first two ends each have two ends of noncompact boundary components pointing to them, while $\eta$ is pointed to by four ends of noncompact boundary components. Those properties prevent the existence of an orientation reversing automorphism of $N$. It seems interesting to ask whether there is a reasonable classification of connected, oriented surfaces that admit an orientation reversing automorphism.
\end{remark}

\subsection{Uniqueness of Rays in a Surface}

Although we will use a more technical result to prove the main theorem, it is worth reporting Theorem~\ref{thm:rayUniqueness}, which gives a clean result on the uniqueness of rays in a surface. Many of the ideas in this proof will be used in Section~\ref{subsec:maintheorem2}.

\begin{lemma}\label{lem:surfaceMinusRayEndsDontChange}
Let $M$ be a surface with compact boundary, and $A$ be a closed subset of $M$ isomorphic to $\R^2_+$ that is disjoint from $\bd{M}$. Let $N = \closr{M \setminus A}$. Then, the inclusion map $i: N \rightarrow M$ induces a homeomorphism on the space of ends.
\end{lemma}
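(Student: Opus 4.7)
My plan is to show that the continuous map $\E{i}\colon \E{N}\to \E{M}$, induced by the proper closed inclusion $i$, is a bijection; since $\E{N}$ is compact and $\E{M}$ is Hausdorff (Lemma~\ref{lem:endSpaceProperties}), it will automatically be a homeomorphism. I may assume $M$ is connected, since components of $M$ disjoint from $A$ contribute identically to $\E{M}$ and $\E{N}$; a path-modification argument (rerouting any path through $\intr A$ along $\bd A\subset N$) then shows $N$ is also connected.

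First I construct a compact exhaustion $(K_m)$ of $M$ by compact $2$-submanifolds such that, under the identification $A\cong \R^2_+$, each intersection $K_m\cap A$ is the rectangle $[-a_m,a_m]\times[0,a_m]$, by a regular-neighborhood construction analogous to Theorem~\ref{thm:steppingStone}. Setting $L_m = K_m\cap N$ yields a compact exhaustion of $N$. With this choice, $Z_m := A\setminus K_m$ is connected, and lies in a unique component $V_0^{(m)}$ of $M\setminus K_m$; the nested sequence $(V_0^{(m)})$ represents the end $\epsilon_A\in \E{M}$ to which the unique end of $A$ points. Since $\bd M$ is compact and disjoint from $A$, the only noncompact boundary component of $N$ is $\bd A\cong \R$, so Lemma~\ref{evenendslemma} applied to the connected surface $N$ gives a common end $\alpha^*\in \E{N}$ to which both ends of $\bd A$ point; tracing a ray along $\bd A$ in both $M$ and $N$ shows $\E{i}(\alpha^*)=\epsilon_A$.

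The main obstacle is the sub-claim: for all sufficiently large $m$, the open set $V_0^{(m)}\cap N = V_0^{(m)}\setminus \intr A$ is connected. Given $p,q$ in this set, I take a path $\gamma\colon [0,1]\to V_0^{(m)}$ from $p$ to $q$ and homotope it rel endpoints to be PL-transverse to the $1$-submanifold $\bd A$, so that $\gamma^{-1}(\bd A)$ is finite. Between consecutive crossings $\gamma$ lies either in $V_0^{(m)}\setminus A$ (already in $N$) or in $\intr A\cap V_0^{(m)}$ with endpoints on $\bd A\cap V_0^{(m)}$, which is a disjoint union of two rays. I then replace each finite excursion into $\intr A$ by a detour in $V_0^{(m)}\cap N$: if the two endpoints lie on the same ray, the subarc of that ray (in $\bd A\subset N$) serves; if they lie on different rays, then for large $m$ both rays lie in the common component $\alpha^*(L_m)$ of $N\setminus L_m$, and since $\alpha^*(L_m)\subset V_0^{(m)}\cap N$, it supplies the required arc.

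Granted the sub-claim, any component $V\ne V_0^{(m)}$ of $M\setminus K_m$ satisfies $V\cap A=\emptyset$ (since $Z_m\subset V_0^{(m)}$), so $V\cap N=V$ is itself a component of $N\setminus L_m$; combined with the sub-claim, this puts the unbounded components of $N\setminus L_m$ and $M\setminus K_m$ in natural inclusion-induced bijection for large $m$. Via the compact-exhaustion description of end spaces, this yields that $\E{i}$ is bijective, hence a homeomorphism.
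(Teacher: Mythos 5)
Your proof is correct, and its overall skeleton matches the paper's: compatible compact exhaustions of $M$ and $N$, a level-by-level bijection between unbounded complementary components, and the observation (via Lemma~\ref{lem:endSpaceProperties}) that a continuous bijection of compact Hausdorff end spaces is a homeomorphism. The difference lies in how you handle the one nontrivial step, namely showing that the component of $M\setminus K_m$ containing $A$ meets $N$ in a single component, which reduces to joining the two rays of $\bd{A}$ beyond a compactum by a path in the complement of $L_m$ in $N$. The paper does this directly: the boundary of $\closr{N\setminus G_j}$ is a $1$-manifold with exactly two ends (those of $\bd{A}\setminus F_j$), so it has a unique noncompact component, a line joining the two rays. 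You instead invoke Lemma~\ref{evenendslemma} applied to $N$ (which is connected with exactly one noncompact boundary component) to obtain a common end $\alpha^*$ of $N$ to which both ends of $\bd{A}$ point, and then note that both rays of $\bd{A}\setminus L_m$ lie in the connected set $\alpha^*(L_m)\subset V_0^{(m)}\cap N$; you also give an explicit transversality/rerouting argument to connect arbitrary points of $V_0^{(m)}\cap N$ to those rays, where the paper relies on a terser point-set observation that every component of $U\cap N$ must meet $\bd{A}$. The two routes rest on the same underlying parity fact about ends of $1$-manifolds---Lemma~\ref{evenendslemma} is itself proved by exactly the counting argument the paper uses here---so yours is essentially a repackaging through an already-established lemma; it is a bit longer but makes the connectivity step fully explicit, and there is no circularity since Lemma~\ref{evenendslemma} does not depend on the present lemma.
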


\begin{proof}
Identify $A$ with $\R^2_+$. Let $F_j$ be the closed half-disk of radius $j$ centered at $(0,0)$. Note that $(F_j)$ is a compact exhaustion of $A$. Let $(G_j)$ be a compact exhaustion of $N$ by subsurfaces such that $F_j \cap \bd{A} = G_j \cap \bd{A}$, and let $H_j = F_j \cup G_j$.

To prove that the end map is bijective, it suffices to prove that for $j$ large enough, each unbounded connected component of $M \setminus H_j$ contains exactly one unbounded connected component of $N \setminus G_j$. Since end spaces are compact and Hausdorff, any bijective map between end spaces is automatically a homeomorphism.

Let $U$ be any unbounded connected component of $M \setminus H_j$. If $U$ is disjoint from $A$, then $U$ is already an unbounded connected component of $N \setminus G_j$. Assume then that $U$ intersects $A$. Then, $U$ contains $A \setminus F_j$. Since $A \setminus F_j$ borders $N \setminus G_j$, $U$ contains at least one connected component of $N \setminus G_j$. To prove that there is exactly one, it suffices to prove that the two components of $\bd{A} \setminus F_j$ are connected via a path in $N \setminus G_j$. Consider the boundary of the closed subset $\closr{N \setminus G_j}$. This boundary has only two ends, corresponding to the two ends of $\bd{A} \setminus F_j$. Thus, those ends lie in the same connected component (which is a copy of $\R$). Thus, $\bd{A} \setminus F_j$ is connected via paths in $\closr{N \setminus G_j}$. Thus, it is connected via paths in $N \setminus G_j$.
\end{proof}

\begin{theorem}[Ray Uniqueness for Surfaces]\label{thm:rayUniqueness}
Let $M$ be a surface with compact boundary, and let $\eta$ be an end of $M$.
If $r$ and $r'$ are rays in $M$ pointing to $\eta$,
then there is an automorphism $\psi: M \rightarrow M$ which sends $r$ to $r'$.
\end{theorem}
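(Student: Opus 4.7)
The approach is to delete closed regular neighborhoods of $r$ and $r'$, apply the Brown and Messer classification (in the precise form of the Addendum, Theorem~\ref{thm:classificationOfAllSurfacesAddendum}) to identify the two resulting surfaces with an extra noncompact boundary component, and then extend the identification back across the removed neighborhoods so that $r$ is carried to $r'$.

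First I would choose closed regular neighborhoods $A\cong\R^2_+$ of $r$ and $A'\cong\R^2_+$ of $r'$, both disjoint from $\bd M$, and set $N:=\closr{M-A}$ and $N':=\closr{M-A'}$. These are connected subsurfaces of $M$; each carries the compact boundary $\bd M$ together with exactly one additional noncompact boundary component, namely $\bd A\cong\R$ in $N$ and $\bd A'\cong\R$ in $N'$. By Lemma~\ref{lem:surfaceMinusRayEndsDontChange}, the inclusions into $M$ induce canonical homeomorphisms $\E{N}\cong\E{M}\cong\E{N'}$, so the end-space data (including orientability and genus of each end) of $N$ and $N'$ agree. Removing a closed half-plane changes neither genus, parity, orientability, nor the set of compact boundary components, so the global Brown and Messer invariants of $N$ and $N'$ also coincide with those of $M$. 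By Lemma~\ref{evenendslemma}, both ends of $\bd A$ (respectively $\bd A'$) point to $\eta$ in $\E{M}$.

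Next I would assemble diagrams $\Delta$ of $N$ and $\Delta'$ of $N'$ as in~\eqref{eq:diagramofsurface} and build an isomorphism $h:\Delta\to\Delta'$. The only nontrivial point is that the new boundary components and their orientations must be matched: the identification $\bd A\leftrightarrow\bd A'$ is forced, and the two ends of $\bd A$ can be matched either way with the two ends of $\bd A'$ (since both ends point to $\eta$). In the orientable case, fix an orientation on $M$; this restricts to compatible orientations on $N$ and $N'$ and designates corresponding subsets $\mathcal{P},\mathcal{P}'$. In the non-orientable case, choose a sequence of disjoint orientable complementary domains in $M$, all disjoint from $A\cup A'$, and orient them; this simultaneously defines $\mathcal P$ and $\mathcal{P}'$ in the same way. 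Either way one obtains an isomorphism $h$ of diagrams inducing the canonical end identification, and the Addendum then produces an isomorphism $\phi:N\to N'$ realizing $h$, whose restriction $\phi|_{\bd A}:\bd A\to\bd A'$ is a homeomorphism of lines.

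Finally I would extend $\phi$ across the removed neighborhoods. After possibly post-composing $\phi$ with a collar homeomorphism of $N'$ supported near $\bd A'$ (a translation along $\bd A'$), I may arrange $\phi(r(0))=r'(0)$. Since $A\cong A'\cong\R^2_+$ and proper rays in $\R^2_+$ are unique up to ambient isotopy \cite[pp.~1845--1852]{cks}, the homeomorphism $\phi|_{\bd A}$ extends to a homeomorphism $A\to A'$ carrying $r$ to $r'$; pasting then yields the desired automorphism $\psi:M\to M$. The main obstacle will be the diagram bookkeeping of step two, particularly in the non-orientable case: one must choose the complementary domains and the end-matching so that $\mathcal P$ and $\mathcal{P}'$ correspond under the forced identification $\bd A\leftrightarrow\bd A'$, so that the Addendum applies on the nose.
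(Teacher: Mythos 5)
Your strategy is the paper's own: delete regular neighborhoods of $r$ and $r'$, match the Brown--Messer invariants of $N=\closr{M-A}$ and $N'=\closr{M-A'}$, produce an isomorphism of diagrams, apply the classification to get $\phi:N\to N'$, and extend across the half-planes. One step, however, does not work as written: in the non-orientable case you define $\mathcal{P}$ and $\mathcal{P}'$ ``simultaneously'' by choosing the orientable complementary domains disjoint from $A\cup A'$. If $M$ is non-orientable but $\eta$ is orientable, Brown and Messer's definition requires some $Q_i$ to be a neighborhood of the end $\beta$ of $N$ corresponding to $\eta$; since both ends of $\bd{A}$ point to $\beta$ (Lemma~\ref{evenendslemma}), every such neighborhood contains the tails of $\bd{A}$, so no family of domains disjoint from $A$ can satisfy the covering requirement, and your prescription fails precisely in the case where $\mathcal{P}$ is nonempty. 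The coordination you were trying to force is in fact unnecessary: as in the paper's proof, each of $N$, $N'$ has a single noncompact boundary line whose two ends point to one end ($\beta$, resp.\ $\beta'$), so $\mathcal{P}$ and $\mathcal{P}'$, chosen independently, are either both empty or both singletons; the unique bijection $\mathcal{P}\to\mathcal{P}'$ extends to a bijection $\E{\bdnc{N}}\to\E{\bdnc{N'}}$, and compatibility with $v$ is automatic because both ends map to $\beta$ (resp.\ $\beta'$). With that repair your argument closes.

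Two smaller points of comparison: you cite Lemma~\ref{lem:surfaceMinusRayEndsDontChange} as giving that the genus and orientability of each end agree in $N$ and $N'$, but that lemma only provides the homeomorphism of end spaces; the paper separately argues (for ends $\tau\neq\eta$ by working in a neighborhood missing $A$ and $A'$, and for $\eta$ by the strip-removal arguments of Sections~\ref{subsec:endSumEndOrientation} and~\ref{subsubsec:endSumEndsGenus}) that the end invariants are unchanged, and you should too. Also, the paper gets by with Theorem~\ref{thm:classificationOfAllSurfaces} alone here, since $\phi$ automatically carries $\bd{A}$ to $\bd{A'}$ as the unique noncompact boundary components; your use of the Addendum (Theorem~\ref{thm:classificationOfAllSurfacesAddendum}) and of ray uniqueness in $\R^2_+$ to arrange $\psi(r)=r'$ is a legitimate, slightly heavier way to finish the extension step.
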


\begin{proof}
It suffices to consider the case where $M$ is connected.
Begin by removing regular neighborhoods $A$ of $r$ and $A'$ of $r'$. Let $N = \closr{M \setminus A}$ and let $N' = \closr{M \setminus A'}$. We first exhibit an isomorphism from $N$ to $N'$ using the classification of surfaces. Then, we will find a compatible isomorphism from $A$ to $A'$. Lastly, we will combine these to define $\psi: M \rightarrow M'$.
It is clear that $M$, $N$, and $N'$ have the same number of compact boundary components.
If $M$ is orientable, then this orientation defines an orientation on $N$ as well. Suppose that $N$ is oriented. There there is an induced orientation on $\bd{A} \subset \bd{N}$. Choose an orientation on $A$ that induces the same orientation on $\bd{A}$. Gluing these orientations together yields an orientation on $M$. Similarly, $N'$ is orientable if and only if $M$ is.

The genus and parity are both defined in terms of a compact exhaustion by subsurfaces. As in Lemma~\ref{lem:surfaceMinusRayEndsDontChange}, identify $A$ with $\R^2_+$. Let $F_j$ be the closed half-disk of radius $j$ centered at $(0,0)$, let $(G_j)$ be a compact exhaustion of $N$ by subsurfaces such that $F_j \cap \bd{A} = G_j \cap \bd{A}$, and let $H_j = F_j \cup G_j$. So, $(H_j)$ is a compact exhaustion of $M$ by subsurfaces.
\[ g(H_j) = \piz{H_j} - \frac{b(H_j) + \chi(H_j)}{2} = \piz{G_j} - \frac{b(G_j) + \chi(G_j)}{2} = g(G_j) \]
As a result, the genus and parity of $N$ equal those of $M$. Similarly, the genus and parity of $N'$ equal those of $M$.

By Lemma~\ref{lem:surfaceMinusRayEndsDontChange}, the inclusion maps $i: N \rightarrow M$ and $i': N' \rightarrow M$ induce homeomorphisms on the end spaces. By looking within a neighborhood disjoint from $A$ and $A'$, it can be shown that for every end of $M$ other than $\eta$, the corresponding ends in $N$ and $N'$ have the same end characteristics.

Given any end $\tau \neq \eta$, the corresponding ends in $N$ and $N'$ are isomorphic. This can be shown by looking in a neighborhood of $\tau$ disjoint from $A$ and $A'$.

It was shown in Section~\ref{subsec:endSumEndOrientation} that removing a strip does not change the orientation of $\eta$. Those arguments apply to this situation as well. It was shown in Section~\ref{subsubsec:endSumEndsGenus} that removing a strip does not change the genus of $\eta$. Those arguments apply to this situation as well.

Recall from Section~\ref{csncb} above that an orientation $\mathcal{P}$ of $\E{\bdnc{N}}$ is a certain subset of $\E{\bdnc{N}}$.
The surface $N$ has a single noncompact boundary component $B$, and both ends of $B$ point to the same end $\beta$ of $N$ by Lemma~\ref{evenendslemma}.
If $\beta$ is nonorientable, then $\mathcal{P}=\emptyset$.
If $\beta$ is orientable, then choose an orientation on an orientable complementary domain that contains $\beta$; this yields $\mathcal{P}=\{\tau\}$ where $\tau$ is one of the two ends of $B$.
Similarly, an orientation $\mathcal{P}'$ of $\E{\bdnc{N'}}$ is defined.
Note that $\eta$ is orientable if and only if $\beta$ is orientable, and
$\eta$ is orientable if and only if $\beta'$.
Thus, $\mathcal{P}$ and $\mathcal{P}'$ are either both empty or are both singletons.
In either case, there is a unique bijection $\mathcal{P} \rightarrow \mathcal{P}'$.
That bijection extends to an isomorphism $\E{\bdnc{N}} \rightarrow \E{\bdnc{N'}}$.
Altogether, this data can be used to construct an isomorphism of diagrams for $N$ and $N'$.

By the Classification of Surfaces (Theorem~\ref{thm:classificationOfAllSurfaces}), there exists an isomorphism $\phi: N \rightarrow N'$. This isomorphism must send $\bd{A}$ to $\bd{A'}$. To extend $\phi$ to an automorphism of $M$, it suffices to find a compatible isomorphism from $A$ to $A'$. But any isomorphism $\bd{\R^2_+} \rightarrow \bd{\R^2_+}$ extends to an isomorphism $\R^2_+ \rightarrow \R^2_+$.
The resulting automorphism of $M$ sends $r$ to $r'$.
\end{proof}

\begin{remark}\label{rem:rayUniquenessFailure}
The conclusion of Theorem \ref{thm:rayUniqueness} does not hold in general if $M$ has noncompact boundary components. Consider the surface $M$ which is a closed disk with one boundary point removed and a sequence of 1-handles attached as in Figure~\ref{fig:raynotunique1}.
	\begin{figure}[htb!]
		\centerline{\includegraphics[scale=0.85]{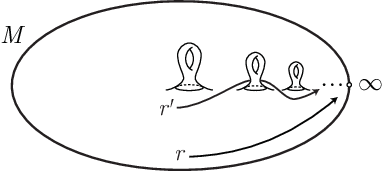}}
		\caption{Surface $M$ containing rays $r$ and $r'$.}
		\label{fig:raynotunique1}
	\end{figure}
(Dickmann~\cite{dickmann} called $M$ a \textit{sliced Loch Ness monster} and studied the mapping class groups of $M$ and similar surfaces.)
So, $M$ has one end $\eta$ and that end has infinite genus.
Let $r$ be a ray parallel to $\bd{M}$, and let $r'$ be a ray that winds around the glued in 1-handles as in Figure~\ref{fig:raynotunique1}.
Suppose there is an automorphism $\psi$ of $M$ that sends $r$ to $r'$.
Let $A$ and $A'$ be closed regular neighborhoods of $r$ and $r'$, respectively.
By the uniqueness of regular neighborhoods, $\psi$ can be modified so that it also sends $A$ to $A'$.
Thus, $N = \closr{M \setminus A}$ is isomorphic to $N' = \closr{M \setminus A'}$.
However, $N$ has one zero-genus end and one infinite-genus end, while $N'$ has two infinite-genus ends as in Figure~\ref{fig:raynotunique2}.
	\begin{figure}[htb!]
		\centerline{\includegraphics[scale=0.9]{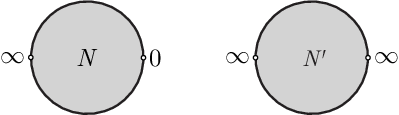}}
		\caption{Nonisomorphic surfaces $N$ and $N'$ each obtained from $M$ by removing the interior of a regular neighborhood of a ray.}
		\label{fig:raynotunique2}
	\end{figure}
This is a contradiction, so $r$ and $r'$ are not related by an automorphism of $M$.
\end{remark}

\subsection{Main Theorem, Re-Proved} \label{subsec:maintheorem2}

Using the classification of all surfaces, with our addendum, we are ready to dive into the main theorem once again.

\begin{lemma} \label{lem:regNeighborhoodUniqueness}
	Let $M$ be a surface with compact boundary. Let $\eta$ and $\eta'$ be distinct ends of $M$. For $i = 1, 2$, let $A_i, A_i'$ be disjoint closed subsurfaces of $M$ isomorphic to $\R^2_+$ and disjoint from $\bd{M}$. Suppose as well that that $A_i$ points to $\eta$ and $A_i'$ points to $\eta'$. Let $\mu: A_1 \rightarrow A_2$ and $\mu': A_1' \rightarrow A_2'$ be isomorphisms. If any of the following conditions hold, then there is an automorphism $\psi$ of $M$ such that the restriction of $\psi$ to $A_1$ is $\mu$ and the restriction of $\psi$ to $A_1'$ is $\mu'$:
\begin{enumerate*}[label=(\roman*)]
\item $M$ is connected and orientable, and $\mu$, $\mu'$ are both orientation-preserving,
\item $M$ is connected and non-orientable, or
\item $\eta$ and $\eta'$ lie in distinct connected components of $M$.
\end{enumerate*}
\end{lemma}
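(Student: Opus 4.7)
The plan is to extend the strategy used in Theorem~\ref{thm:rayUniqueness} by cutting out both $A_i$ and $A_i'$ from $M$ simultaneously, then applying the Addendum to the Classification of Surfaces (Theorem~\ref{thm:classificationOfAllSurfacesAddendum}) to obtain an isomorphism on the complements whose boundary behavior matches $\mu$ and $\mu'$, and finally gluing. Concretely, for $i=1,2$ let $N_i=\closr{M\setminus(A_i\cup A_i')}$. Each $N_i$ has boundary $\bd M$ together with two noncompact boundary components $\bd A_i\cong\R$ and $\bd A_i'\cong\R$. Applying (an obvious two-sided version of) Lemma~\ref{lem:surfaceMinusRayEndsDontChange} gives canonical homeomorphisms $\E{N_i}\cong\E M$, and the compact-exhaustion argument from the proof of Theorem~\ref{thm:rayUniqueness} shows that $N_1$ and $N_2$ share the same orientability, genus, parity, and number of compact boundary components as $M$, hence as each other. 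The end-invariants of ends other than $\eta,\eta'$ are inherited from $M$ by looking in neighborhoods disjoint from the $A_i,A_i'$, and the orientability and genus of $\eta,\eta'$ themselves are preserved via the strip-removal arguments from Sections~\ref{subsec:endSumEndOrientation} and~\ref{subsubsec:endSumEndsGenus}.

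The next step is to produce an isomorphism of Brown--Messer diagrams $h\colon\Delta_{N_1}\to\Delta_{N_2}$. Lemma~\ref{evenendslemma} shows that both ends of $\bd A_i$ point to the copy of $\eta$ in $\E{N_i}$, and similarly both ends of $\bd A_i'$ point to $\eta'$. Define $h$ by $\bd A_1\mapsto\bd A_2$ and $\bd A_1'\mapsto\bd A_2'$ on $\pi_0(\bdnc{N_i})$, induce the end map $\E{\bdnc{N_1}}\to\E{\bdnc{N_2}}$ from the boundary restrictions $\mu|_{\bd A_1}$ and $\mu'|_{\bd A_1'}$, and use the identity $\E{N_1}\cong\E M\cong\E{N_2}$ on ends of $N_i$. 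For the orientation data $\mathcal{P}_i$: in case~(i), fix the orientation on $M$ and restrict to each $N_i$; the hypothesis that $\mu$ and $\mu'$ are orientation-preserving forces the positive ends of $\bd A_i$ and $\bd A_i'$ (outward normal first convention, inherited from $N_i$) to correspond under $\mu|_{\bd A_1}$ and $\mu'|_{\bd A_1'}$. In case~(ii), $M$ is non-orientable and $\mathcal{P}_i$ depends on arbitrary orientations placed on orientable complementary domains containing the relevant ends, so one freely makes matched choices on the two sides. In case~(iii), the components of $M$ containing $\eta$ and $\eta'$ are distinct, and the above construction applies to each component independently (using the identity on components not meeting any $A_i\cup A_i'$).

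Applying Theorem~\ref{thm:classificationOfAllSurfacesAddendum} yields an isomorphism $\phi\colon N_1\to N_2$ inducing $h$ (and orientation-preserving in case~(i)). Its restriction $\phi|_{\bd A_1}$ and the boundary map $\mu|_{\bd A_1}$ are two homeomorphisms $\R\to\R$ whose effects on the two ends of $\R$ coincide (both preserve or both swap those ends, as dictated by $h$), so they are properly isotopic; a collar modification in $N_1$ with disjoint support near $\bd A_1$ and $\bd A_1'$ adjusts $\phi$ so that $\phi|_{\bd A_1}=\mu|_{\bd A_1}$ and $\phi|_{\bd A_1'}=\mu'|_{\bd A_1'}$ exactly. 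The desired automorphism $\psi\colon M\to M$ is then defined to be $\phi$ on $N_1$, $\mu$ on $A_1$, and $\mu'$ on $A_1'$; the three pieces agree on their common boundaries and combine into a bijective \PL\ map. The most delicate step is arranging the orientation data $\mathcal{P}_i$ so that $h$ is genuinely a diagram isomorphism: in case~(i) this is exactly where the orientation-preserving hypothesis on $\mu,\mu'$ is used, while in case~(ii) one must verify that the freedom in choosing orientations on orientable complementary domains is sufficient to produce compatible choices simultaneously at $\eta$ and at $\eta'$.
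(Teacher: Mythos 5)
Your proposal is correct and follows essentially the same route as the paper: form $N_i=\closr{M\setminus(A_i\cup A_i')}$, match all Brown--Messer invariants and build a diagram isomorphism from $\mu|_{\bd{A_1}}$ and $\mu'|_{\bd{A_1'}}$ (using the orientation-preserving hypothesis in case (i) and the freedom in choosing $\mathcal{P}_i$ in cases (ii) and (iii)), invoke the Addendum, isotope the resulting isomorphism near $\bd{A_1}\cup\bd{A_1'}$ to agree with $\mu,\mu'$ on the boundary, and glue. The point you flag as delicate in case (ii) is handled in the paper exactly as you suggest: when $\eta$ (or $\eta'$) is orientable, one puts the chosen end of $\bd{A_1}$ into $\mathcal{P}_1$ and its image under $\mu$ into $\mathcal{P}_2$, so the matched choices always exist.
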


\begin{proof}
Let $N_i = \closr{M \setminus (A_i \cup A_i')}$. We will show that $N_1$ and $N_2$ are isomorphic, and that this isomorphism is compatible with $\mu$ and $\mu'$.

Using the same strategies as Theorem~\ref{thm:rayUniqueness}, it can be shown that $N_1$ and $N_2$ have the same genus, parity, orientability, and number of compact boundary components as $M$.

Using the same strategies as Theorem~\ref{thm:rayUniqueness}, it can be shown that the inclusion maps $i_i: N_i \rightarrow M$ induce homeomorphisms on the space of ends, and that corresponding ends have the same genus, orientability, and number of compact boundary components.

By restricting $\mu$ and $\mu'$ to $\bd{A_1}$ and $\bd{A_1'}$, we obtain a map from $\bd{A_1} \cup \bd{A_1'} \rightarrow \bd{A_2} \cup \bd{A_2'}$. Define $h: \E{\bdnc{N_1}} \rightarrow \E{\bdnc{N_2}}$ to be the corresponding map of ends.

Suppose first that $M$ is connected and orientable. Fix an orientation on $M$, which induces orientations on $N_1$ and $N_2$. Let $\mathcal{P}_i$ be the orientation of $\E{\bdnc{N}}$ induced by the orientation on $N_i$. Since $\mu$ and $\mu'$ are orientation-preserving, the map $h$ sends $\mathcal{P}_1$ to $\mathcal{P}_2$. Using $h$, we can construct an isomorphism of diagrams. By the Addendum (Theorem~\ref{thm:classificationOfAllSurfacesAddendum}), there is an orientation-preserving isomorphism $\phi: N_1 \rightarrow N_2$ that induces this isomorphism of diagrams.

Since $\mu$ and $\mu'$ are both orientation-preserving, $\phi$ has the same effect on the orientations of $\bd{A_1}$ and $\bd{A_1'}$ as $\mu$ and $\mu'$ do. Because of this, $\phi$ can be isotoped in a neighborhood of $\bd{A_1}$ and $\bd{A_1'}$ to become equal to $\mu$ and $\mu'$ on $\bd{A_1}$ and $\bd{A_1'}$.

Now suppose that $M$ is connected and non-orientable. If $\eta$ is orientable, then add an end of $\bd{A_1}$ to $\mathcal{P}_1$, and add the corresponding end of $\bd{A_2}$ (under $\mu$) to $\mathcal{P}_1$. Do the same for $\eta'$. Now, $h$ sends $\mathcal{P}_1$ to $\mathcal{P}_2$, so we can again obtain an isomorphism $\phi: N_1 \rightarrow N_2$ that is compatible with $h$. And, $\phi$ can be isotoped in a neighborhood of $\bd{A_1}$ and $\bd{A_1'}$ to become equal to $\mu$ and $\mu'$ on $\bd{A_1}$ and $\bd{A_1'}$.

Now suppose that $\eta$ and $\eta'$ lie in distinct connected components of $M$. As before, construct $\mathcal{P}_1$ and $\mathcal{P}_2$, and obtain an isomorphism $\phi: N_1 \rightarrow N_2$ that is compatible with $h$. Isotope $\phi$ in a neighborhood of $A_1$ and $A_1'$ to become equal to $\mu$ and $\mu'$ on $\bd{A_1}$ and $\bd{A_1'}$.

Once $\phi$ agrees with with $\mu$ and $\mu'$ on $A_1$ and $A_1'$, all three maps can be glued together to form an isomorphism $\psi: M \rightarrow M$.
\end{proof}

As a corollary, we recover the main theorem.

\begin{theorem}
Let $M$ be a PL surface with compact boundary, and let $\eta$, $\eta'$ be distinct ends of $M$. Let $N$ be a result of adding a 1-handle at infinity to $M$ along $\eta$ and $\eta'$. If $\eta$ and $\eta'$ are ends of distinct connected components of $M$, then $N$ is unique up to isomorphism. If $\eta$ and $\eta'$ are ends of the same connected component $M_1$ of $M$, and $M_1$ is non-orientable, then $N$ is unique up to isomorphism. Lastly, suppose that $\eta$ and $\eta'$ are ends of the same connected component $M_1$ of $M$, and $M_1$ is orientable. If the 1-handle is oriented, then $N$ is unique up to isomorphism. If the 1-handle is not oriented, then $N$ is unique up to isomorphism.
\end{theorem}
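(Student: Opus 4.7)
The plan is to deduce the theorem from Lemma~\ref{lem:regNeighborhoodUniqueness} by phrasing each $1$-handle attachment in the alternative form described at the end of Section~\ref{sec:endSumIntroduction}. Let $N_1$ and $N_2$ be two results of attaching a $1$-handle at infinity to $M$ along $\eta, \eta'$; when the $1$-handle's orientation is relevant, both attachments use handles of the same orientation type. For $k = 1, 2$, choose disjoint closed regular neighborhoods $A_k, A_k' \cong \R^2_+$ of the rays used in the construction of $N_k$, each disjoint from $\bd{M}$. Then $N_k$ is the quotient of $M \setminus (\intr{A_k} \cup \intr{A_k'})$ obtained by identifying $\bd{A_k}$ with $\bd{A_k'}$ via a homeomorphism $\tau_k$ of their boundaries (each a copy of $\R$). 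When $M$ is orientable, fix orientations on the $A_k, A_k'$ compatible with an orientation of $M$. Then $\tau_k$ is orientation-reversing on $\R$ when the $1$-handle is oriented and orientation-preserving when it is not; in either case, $\tau_1$ and $\tau_2$ have the same parity as self-maps of $\R$.

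Next, I will produce isomorphisms $\mu: A_1 \to A_2$ and $\mu': A_1' \to A_2'$ that intertwine $\tau_1$ and $\tau_2$, in the sense that $\tau_2 \circ \mu|_{\bd{A_1}} = \mu'|_{\bd{A_1'}} \circ \tau_1$. Pick any isomorphism $\mu$, orientation-preserving whenever the relevant component of $M$ is orientable. The compatibility equation then determines $\mu'|_{\bd{A_1'}}$ as $\tau_2 \circ \mu|_{\bd{A_1}} \circ \tau_1^{-1}$, a self-homeomorphism of $\R$; any such extends to a self-homeomorphism of $\R^2_+$, giving an isomorphism $\mu': A_1' \to A_2'$. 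A parity count using $\tau_1, \tau_2$ having matching parity shows that when $\mu|_{\bd{A_1}}$ is orientation-preserving, so is $\mu'|_{\bd{A_1'}}$; hence $\mu'$ too may be chosen orientation-preserving.

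With $\mu, \mu'$ in hand, Lemma~\ref{lem:regNeighborhoodUniqueness} applies in each situation of the theorem: case (iii) when $\eta, \eta'$ lie in distinct components of $M$, case (ii) when the shared component is non-orientable, and case (i) when the shared component is orientable (using that $\mu, \mu'$ were arranged to be orientation-preserving, regardless of whether the $1$-handle itself is oriented). The lemma supplies an automorphism $\psi: M \to M$ with $\psi|_{A_1} = \mu$ and $\psi|_{A_1'} = \mu'$. By the intertwining equation, $\psi$ restricts to a homeomorphism $M \setminus (\intr{A_1} \cup \intr{A_1'}) \to M \setminus (\intr{A_2} \cup \intr{A_2'})$ that conjugates $\tau_1$ to $\tau_2$, and so it descends to an isomorphism $N_1 \to N_2$.

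I expect the main obstacle to be the bookkeeping at the end of the second paragraph: the theorem's orientable-but-non-oriented-handle case still requires $\mu, \mu'$ to be orientation-preserving for case (i) of Lemma~\ref{lem:regNeighborhoodUniqueness} to apply. This is possible because the non-orientedness of the $1$-handle is encoded solely in the parity of $\tau_k$, and the relation $\mu'|_{\bd{A_1'}} = \tau_2 \circ \mu|_{\bd{A_1}} \circ \tau_1^{-1}$ forces $\mu'|_{\bd{A_1'}}$ to have the same parity as $\mu|_{\bd{A_1}}$ whenever $\tau_1$ and $\tau_2$ share a parity. Carefully separating the ``orientation of $\mu$'' from the ``orientation of the $1$-handle,'' and checking that the two attachments being of the same orientation type is exactly what forces $\tau_1, \tau_2$ to share a parity, is the only real content beyond invoking the lemma.
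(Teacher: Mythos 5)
Your proposal is correct and rests on the same key ingredient as the paper's argument in Section~\ref{subsec:maintheorem2}: both reduce the theorem to Lemma~\ref{lem:regNeighborhoodUniqueness}, which extends prescribed isomorphisms of the two half-plane neighborhoods to a global automorphism of $M$. The implementations differ in how those isomorphisms are chosen and how the automorphism is promoted to an isomorphism $N_1\to N_2$. The paper stays inside the original handle-attachment model: it takes $\mu=\nu_2\circ\nu_1^{-1}$ and $\mu'=\nu_2'\circ(\nu_1')^{-1}$, the canonical isomorphisms determined by the tubular neighborhood parametrizations, so the resulting automorphism $\chi$ carries the entire attaching data of $N_1$ to that of $N_2$ and extends over the handle $E$ by the identity. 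You instead pass to the cut-and-reglue model mentioned at the end of Section~\ref{sec:endSumIntroduction} (the equivalence with the original construction is asserted there with a citation to~\cite{cks}, not proved in the paper, so your argument quietly leans on that reference), pick $\mu$ freely, and then rig $\mu'$ on the boundary by the intertwining relation $\mu'|_{\bd{A_1'}}=\tau_2\circ\mu|_{\bd{A_1}}\circ\tau_1^{-1}$ so that the automorphism conjugates $\tau_1$ to $\tau_2$ and descends to the quotient. What your route buys is an explicit treatment of the one genuinely delicate point, the connected orientable component with a non-oriented handle: there Lemma~\ref{lem:regNeighborhoodUniqueness}(i) needs $\mu,\mu'$ orientation-preserving, and your parity bookkeeping (same handle type forces $\tau_1,\tau_2$ to have the same parity, hence $\mu'$ inherits the parity of $\mu$) supplies exactly the check that the paper's terse ``canonical isomorphism'' argument leaves implicit, since $\nu_2\circ\nu_1^{-1}$ need not be orientation-preserving without a small adjustment of the parametrizations. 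Modulo the convention-dependent labeling of which parity of $\tau_k$ corresponds to an oriented handle (only the ``same type implies same parity'' statement matters, and that is correct), your argument goes through.
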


\begin{proof}
Consider two end sums, $N_1$ and $N_2$, along $\eta$ and $\eta'$, satisfying the above criteria. Let $\nu_i, \nu_i'$ be the tubular neighborhood maps used to construct $N_i$, and let $A_i, A_i'$ be images of $\nu_i, \nu_i'$ respectively. Let $\psi: A_1 \rightarrow A_2$ be the canonical isomorphism between regular neighborhoods, defined by $\psi = \nu_2 \circ \nu_1^{-1}$. Similarly, define $\psi': A_1' \rightarrow A_2'$.
	
Using Lemma~\ref{lem:regNeighborhoodUniqueness}, we extend $\psi$ and $\psi'$ to a global automorphism $\chi: M \rightarrow M$. Since $\chi$ preserves all data used in the 1-handle construction, $\chi$ extends to an isomorphism from $N_1$ to $N_2$.
\end{proof}

\section{Extension of the Main Theorem to TOP and DIFF}
\label{sec:secondTheorem}

Our main theorem---Theorem~\ref{thm:endSumUniquenessPLCase}---also holds true in the \TOP\ and \DIFF\ categories. Hence, Corollary~\ref{cor:endSumPL} does as well. This is an aspect of a general theme that often statements about surfaces are true independent of category. An important rationale for this theme, and a key reason why we can generalize in this case, is that every surface has a unique \TOP, \PL, and \DIFF\ structure up to isomorphism. See, for example, Moise~\cite[Preface \& Ch.~8]{moise}, Thurston~\cite[$\S$3.10]{thurston}, and Hatcher~\cite{hatcher}. In this section, we show that Theorem~\ref{thm:endSumUniquenessPLCase} for \PL\ implies the corresponding \TOP\ and \DIFF\ analogues.

First, suppose we are given \TOP\ data for the addition of a $1$-handle at infinity as in Theorem~\ref{thm:endSumUniquenessPLCase}.
We reuse the notation from Conventions~\ref{keyconventions} in Section~\ref{sec:mainTheorem}.
In particular, $N$ is the \TOP\ surface that results from adding a \TOP\ $1$-handle at infinity to the surface $M$ along ends $\eta$ and $\eta'$.
The $1$-handle is added using disjoint rays $r$ and $r'$, pointing to the ends $\eta$ and $\eta'$ respectively, and disjoint tubular neighborhood maps $\nu$ and $\nu'$.
By the triangulation of surfaces, there exists a \PL\ surface $M_{\PL}$ and a homeomorphism $j:M\to M_{\PL}$. So, $j\circ r$ and $j\circ r'$ are disjoint \TOP\ rays in $M_{\PL}$, and $j\circ \nu$ and $j\circ \nu'$ are disjoint \TOP\ tubular neighborhood maps.
We may adjust $M_{\PL}$ by a small ambient homeomorphism with with support in any prescribed open neighborhood of $\image{j\circ \nu} \cup \image{j\circ \nu'}$ such that the images of the tubular neighborhood maps $j\circ \nu$ and $j\circ \nu'$ are \PL. That is, by Moise~\cite[Thm.~10.13]{moise}, there is a homeomorphism $k:M_{\PL}\to M_{\PL}$ with support in any prescribed open neighborhood of $\image{j\circ \nu} \cup \image{j\circ \nu'}$ such that the homeomorphism $h=k\circ j$ satisfies: (i) the disjoint rays $h\circ r$ and $h\circ r'$ are \PL\ embeddings and (ii) the disjoint tubular neighborhood maps $h\circ\nu$ and $h\circ\nu'$ are \PL\ embeddings.
As $j$ and $k$ are homeomorphisms, each induces a homeomorphism on the spaces of ends; note that $k$ induces the identity on the space of ends of $M_{\PL}$. Let $\eta_{\PL}$ and $\eta'_{\PL}$ denote the ends of $M_{\PL}$ corresponding under $j$ to $\eta$ and $\eta'$ respectively.

We now have \PL\ data for Theorem~\ref{thm:endSumUniquenessPLCase} consisting of the surface $M_{\PL}$, disjoint rays $h\circ r$ and $h\circ r'$, pointing to the ends $\eta_{\PL}$ and $\eta'_{\PL}$ respectively, and disjoint tubular neighborhood maps $h\circ\nu$ and $h\circ\nu'$. Let $N_{\PL}$ be the \PL\ surface that results from adding a \PL\ $1$-handle at infinity to $M_{\PL}$ according to this data. Note that we have an induced homeomorphism $\alpha:N\to N_{\PL}$.

Now, suppose we have possibly different \TOP\ data pointing to the same ends of $M$. That is, we have disjoint rays $s$ and $s'$, pointing to the \textit{same} ends as before $\eta$ and $\eta'$ respectively, and disjoint tubular neighborhood maps $\mu$ and $\mu'$. Let $P$ be the \TOP\ surface that results from adding a \TOP\ $1$-handle at infinity to $M$ according to this data. We must show that $N$ and $P$ are homeomorphic. We have the homeomorphism $j:M\to M_{\PL}$. As above, there is a homeomorphism $l:M_{\PL}\to M_{\PL}$ with support in any prescribed open neighborhood of $\image{j\circ \mu} \cup \image{j\circ \mu'}$ such that the homeomorphism $g=l\circ j$ satisfies: (i) the disjoint rays $g\circ s$ and $g\circ s'$ are \PL\ embeddings and (ii) the disjoint tubular neighborhood maps $g\circ\mu$ and $g\circ\mu'$ are \PL\ embeddings. Let $P_{\PL}$ be the \PL\ surface that results from adding a \PL\ $1$-handle at infinity to $M_{\PL}$ according to this data. Note that we have an induced homeomorphism $\beta:P\to P_{\PL}$.

Observe that in $M_{\PL}$, the rays $h\circ r$ and $g\circ s$ both point to $\eta_{\PL}$, and the rays $h\circ r'$ and $g\circ s'$ both point to $\eta'_{\PL}$. Therefore, the \PL\ version of Theorem~\ref{thm:endSumUniquenessPLCase} yields a \PL\ homeomorphism $\gamma:N_{\PL}\to P_{\PL}$. Hence, $\beta^{-1}\circ\gamma\circ\alpha:N\to P$ is a homeomorphism as desired. This completes our proof of the \TOP\ version of Theorem~\ref{thm:endSumUniquenessPLCase}.

Second, suppose we are given two collections of \DIFF\ data for the addition of a $1$-handle at infinity as in Theorem~\ref{thm:endSumUniquenessPLCase}. One collection is the surface $M$, disjoint rays $r$ and $r'$, pointing to the ends $\eta$ and $\eta'$ respectively, and disjoint tubular neighborhood maps $\nu$ and $\nu'$. Let $N$ be the \DIFF\ surface that results by adding a \DIFF\ $1$-handle at infinity to $M$ according to this data. The other collection is the surface $M$, disjoint rays $s$ and $s'$, pointing to the \textit{same} ends as before $\eta$ and $\eta'$ respectively, and disjoint tubular neighborhood maps $\mu$ and $\mu'$. Let $P$ be the \DIFF\ surface that results by adding a \DIFF\ $1$-handle at infinity to $M$ according to this data. We must show that $N$ and $P$ are diffeomorphic. Ignoring \DIFF\ structures for the moment, the \TOP\ version of Theorem~\ref{thm:endSumUniquenessPLCase}---proved above---implies that $N$ and $P$ are homeomorphic. As $N$ and $P$ are homeomorphic \DIFF\ surfaces, they are diffeomorphic as desired.
That completes our proof of the \DIFF\ version of Theorem~\ref{thm:endSumUniquenessPLCase}.

\bibliographystyle{amsalpha}

\end{document}